\let\originalleft\left
\let\originalright\right
\renewcommand{\left}{\mathopen{}\mathclose\bgroup\originalleft}
\renewcommand{\right}{\aftergroup\egroup\originalright}
\newcommand{\abs}[1]{\left\vert #1 \right\vert}
\newcommand{\ind}[1]{\mathds{1}_{\left\lbrace #1 \right\rbrace}}
\newcommand{\rd}{\mathbb{R}^{d}}
\newcommand{\dd}{\mathrm{d}}
\newcommand{\mminf}{M/M/$\infty$ }
\newcommand{\condexpectation}[2]{\mathbb{E} \left[ #1 \, \middle\vert \, #2 \right]}
\theoremstyle{plain}
\newtheorem{theorem}{Theorem}[section]
\newtheorem{corollary}[theorem]{Corollary}
\newtheorem{lemma}[theorem]{Lemma}
\newtheorem{proposition}[theorem]{Proposition}
\theoremstyle{definition}
\newtheorem{definition}[theorem]{Definition}
\newtheorem{example}[theorem]{Example}
\newtheorem*{acknowledgement}{Acknowledgement}
\newcommand{\closedball}[2]{B \left[ #1 , #2 \right]}
\newcommand{\closedballplus}[2]{B_{+} \left[ #1 , #2 \right]}
\newcommand{\openball}[2]{B \left( #1 , #2 \right)}
\newcommand{\openballplus}[2]{B_{+} \left( #1 , #2 \right)}
\title{A large deviations principle for infinite-server queues in a random environment}
\author{H. M. Jansen$^{1,2}$, M. R. H. Mandjes$^{1}$, K. De Turck$^{2}$, S. Wittevrongel$^{2}$}
\begin{document}
\maketitle

\begin{abstract}
\noindent This paper studies an infinite-server queue in a random environment, meaning that the arrival rate, the service requirements and the server work rate are modulated by a general c\`{a}dl\`{a}g stochastic background process. To prove a large deviations principle, the concept of attainable parameters is introduced. Scaling both the arrival rates and the background process, a large deviations principle for the number of jobs in the system is derived using attainable parameters. Finally, some known results about Markov-modulated infinite-server queues are generalized and new results for several background processes and scalings are established in examples.
\end{abstract}

\noindent {\it Keywords.} infinite-server queue $\star$ random environment $\star$ modulation $\star$ large deviations principle
\newline

\noindent $^{1}$ Korteweg-de Vries Institute for Mathematics,
University of Amsterdam, Science Park 904, 1098 XH Amsterdam, the Netherlands.
\newline

\noindent $^{2}$ TELIN, Ghent University, Sint-Pietersnieuwstraat 41,
B-9000 Ghent, Belgium.
\newline

\noindent {\it E-mail}. {\tt\{h.m.jansen|m.r.h.mandjes\}@uva.nl}, {\tt\{kdeturck|sw\}@telin.ugent.be}

\section{Introduction}
\label{sec:intro}
The infinite-server queue is one of the fundamental models in queueing theory. Its distinguishing feature is the presence of an infinite number of servers, so that jobs are served independently and there are no waiting times. This leads to explicit formulas for many quantities of interest, especially for M/M/$\infty$ queues, where jobs arrive according to a Poisson process and the service requirements have an exponential distribution. In practice, however, one often observes time-varying arrival intensities, service requirement distributions and server work rates. This calls for adequate modeling.

A natural way to incorporate time-dependence is to consider an M/M/$\infty$ queue in a random environment. In this case there is an independent background process that modulates the arrival rate, the service requirement distribution and the work rate of the servers.

\emph{Model.} In this paper, we study the case where the background process is a general stochastic process $J$ whose paths are right-continuous and have finite left limits, i.e.\ $J$ has c\`{a}dl\`{a}g paths. The process $J$ modulates the arrival rate, the service requirement distribution and the server work rate in the following way. When $J$ is in state $x$, jobs arrive according to a Poisson process with intensity $\lambda \left( x \right)$. Upon arrival, a job draws an independent service requirement from an exponential distribution with parameter $\kappa \left( x \right)$ if $J$ is in state $x$ when the job arrives. Then the service requirement of the job is processed by a server, whose work rate is $\mu \left( x \right)$ while $J$ is in state $x$. Immediately after its service requirement has been processed, a job leaves the system.

\emph{Main result.} The main result of this paper is a full large deviations principle (LDP) for the transient number of jobs in the system, under a scaling of the arrival rate and the background process. To arrive at this result, we first show that the number of jobs in the system at time $t \geq 0$ has a Poisson distribution with random parameter $\phi_{t} \left( J \right)$. Then we scale $\lambda \mapsto n \lambda$ and we scale $J \mapsto J_{n}$ such that the normalized random parameter $\phi_{t} \left( J_{n} \right)$ satisfies an LDP. Under this scaling, we derive the LDP for the transient number of jobs in the system.

\emph{Literature.} The amount of literature on infinite-server queues in a random environment is quite small. Moreover, almost all papers on this topic (with notable exception \cite{BM2013}) study Model~I or Model~II (cf.\ \cite{BDKM2014}). In both models, jobs arrive according to a Poisson process with intensity $\lambda \left( x \right)$ when the background process is in state $x$. In Model~I, service requirements have a standard exponential distribution and servers work at rate $\mu \left( x \right)$ when the background process is in state $x$. This is equivalent to the jobs being subject to a modulated hazard rate. In Model~II, service requirements have an exponential distribution with parameter $\kappa \left( x \right)$ when the background process is in state $x$ and servers work at constant rate $1$.

An early reference is \cite{OP1986}, which analyzes Model~I when the background process is a continuous-time Markov chain. Important results in \cite{OP1986} are a recursion for the factorial moments of the number of jobs and the observation that the steady-state distribution is not of some `matrix-Poisson' type.

Other important results can be found in \cite{Dauria2008}, which studies Model~I when the background process is a semi-Markov process with finite state space. The crucial observation in \cite{Dauria2008} is that the stationary number of jobs has a Poisson distribution with a random parameter that is determined by the background process. Moreover, the factorial moments of the number of jobs are computed via a recursion. These results are generalized in \cite{FA2009}.

The observation in \cite{Dauria2008} is used to obtain time-scaling results in both the central limit regime and the large deviations regime. In the central limit regime, \cite{BKMT2014} and \cite{BDM2014} derive central limit theorems for Markov-modulated infinite-server queues for several models and scalings. In this regime, the so-called deviation matrix (cf.\ \cite{CV2002}) plays an important role. In the large deviations regime, \cite{BDKM2014} and \cite{BM2013} compute optimal paths to obtain rate functions under a linear scaling of the arrival rates, given that the background process is an irreducible continuous-time Markov chain. The former studies Model~I, whereas the latter studies Model~II for a class of service requirement distributions that includes the exponential distribution.

As mentioned, we show that the number of jobs in the system has a Poisson distribution with a random parameter, which can be interpreted as a mixture of Poisson distributions. In \cite{Biggins2004}, an LDP is derived for mixtures that satisfy certain assumptions. However, apart from the assumption that the normalized random parameter satisfies an LDP, these assumptions are either superfluous or too restrictive in our case. In particular, we do not assume that the sequence of measures induced by the normalized random parameter is exponentially tight, so we cannot use the arguments in \cite{Biggins2004}. Hence, we need a different approach to obtain an LDP.

\emph{Contributions.} In more detail, the contributions of this paper are the following. We generalize known models by considering a general c\`{a}dl\`{a}g background process instead of a semi-Markov background process with finite state space. Moreover, in our model the background process modulates both the service requirement distributions and the server work rate, whereas previous papers considered models in which either the service requirement distributions or the server work rate was modulated. In particular, our model generalizes Model~I and Model~II.

Using elementary arguments, we show that in this model the transient number of jobs has a Poisson distribution with random parameter. We scale the arrival rate linearly and we scale the background process such that the normalized random parameter satisfies an LDP. Under this scaling, we obtain a full LDP for the number of jobs in the system. To the best of our knowledge, this is the first time that a full LDP is presented for modulated infinite-server queues. To prove the LDP, we introduce the concept of attainable parameters and use a variation on Varadhan's~Lemma. These tools enable us to avoid the assumptions in \cite{Biggins2004}.

The theory is illustrated by examples that show rate functions that cannot be obtained via background processes with finite state space. Additionally, we show that completely different background processes may lead to the same LDP, even in highly nontrivial cases.

\emph{Organization.} The rest of this paper is organized as follows. In Section~2, we describe the model and provide some of its basic properties. Additionally, we fix some notation. In Section~3, we introduce the concept of attainable parameters and prove an LDP for the number of jobs in the system. In Section~4, we show that the rate function corresponding to this LDP has a simple description when we do not scale the background process. As an illustration, we work out some examples. In Section~5, we give examples in which we do scale the background process. In Section~6, we briefly discuss the results and point out some topics for future research. The appendices provide some technical details about the number of jobs in the system (Section~A), continuity in Skorokhod space (Section~B) and properties of Poisson random variables (Section~C).

\section{Model and problem description}
\label{sec:modprob}
We study an infinite-server queue with modulated arrival rates, service requirements and server work rates. The precise mathematical setup of the model and some of its basic properties are provided in Section~\ref{sec:transient}. In words, the model may be described as follows.

Let $\left( J \left( t \right) \right)_{t \geq 0}$ be a c\`{a}dl\`{a}g stochastic process with state space $\mathcal{E}$, which is assumed to be a metric space. We will refer to the process $J$ as the background process or modulating process. While the background process is in state $x \in \mathcal{E}$, jobs enter the system following a Poisson process with intensity $\lambda \left( x \right) \geq 0$. 

When job $k$ enters the system, it draws a service requirement from an independent exponential distribution with parameter $\kappa \left( y \right)$ if the background process is in state $y \in \mathcal{E}$ upon its arrival. Server $k$ processes this service requirement at rate $\mu \left( z \right)$ while the background process is in state $z \in \mathcal{E}$. Job $k$ leaves the system when its service requirement has been processed.

We denote a modulated infinite-server queue by the quadruple $\left( J , \lambda , \kappa , \mu \right)$. Additionally, we denote the number of jobs in this system at time $t$ by $M \left( t \right)$. In Section~\ref{sec:transient} it is shown that $M \left( t \right)$ has a Poisson distribution with random parameter
\begin{align}
\phi_{t} \left( J \right) = \int_{0}^{t} \lambda \left( J \left( s \right) \right) e^{- \kappa \left( J \left( s \right) \right) \int_{s}^{t} \mu \left( J \left( r \right) \right) \, \dd r} \, \dd s. \label{eq:randomparameter}
\end{align}
This will turn out to be a crucial property in this paper.

We are interested in events with an unusual number of jobs in the system. More precisely, we would like to prove an LDP for the number of jobs in the system. A sequence of probability measures $\left\lbrace \tau_{n} \right\rbrace_{n \in \mathbb{N}}$ is said to satisfy an LDP with rate function $\rho$ if there exists a lower semi-continuous function $\rho \colon \mathcal{X} \to \left[ 0,\infty \right]$ such that
\begin{align*}
\limsup_{n \to \infty} \frac{1}{n} \log \tau_{n} \left( F \right) &\leq - \inf_{a \in F} \rho \left( a \right)
\intertext{for all closed sets $F$ and}
\liminf_{n \to \infty} \frac{1}{n} \log \tau_{n} \left( G \right) &\geq - \inf_{a \in G} \rho \left( a \right)
\end{align*}
for all open sets $G$, where each $\tau_{n}$ is defined on the Borel $\sigma$-algebra of the topological space $\mathcal{X}$. A sequence of random variables is said to satisfy an LDP with rate function $\rho$ if the sequence of measures induced by the random variables satisfies an LDP with rate function $\rho$. Importantly, we do not assume that $\rho$ is a good rate function, i.e., we do not assume that $\rho$ has compact level sets.

As mentioned, we would like to prove an LDP for the number of jobs in the system. To analyze this problem, we will scale the arrival rates via $\lambda \left( x \right) \mapsto n \lambda \left( x \right)$, i.e., we linearly speed up the arrivals. In addition, we will scale the background process via $J \mapsto J_{n}$. 
Formally, scaling $\lambda \left( x \right) \mapsto n \lambda \left( x \right)$ and $J \mapsto J_{n}$ means that we start with an infinite-server queue $\left( J , \lambda , \kappa , \mu \right)$ and then consider the sequence of infinite-server queues $\left\lbrace \left( J_{n} , n\lambda , \kappa , \mu \right) \right\rbrace_{n \in \mathbb{N}}$.

Given the scalings $\lambda \left( x \right) \mapsto n \lambda \left( x \right)$ and $J \mapsto J_{n}$, we denote the corresponding number of jobs in the system by $M_{n} \left( t \right)$. It follows immediately from equation~\eqref{eq:randomparameter} that $M_{n} \left( t \right)$ has a Poisson distribution with random parameter
\begin{align*}
n \phi_{t} \left( J_{n} \right) = \int_{0}^{t} n \lambda \left( J_{n} \left( s \right) \right) e^{- \kappa \left( J_{n} \left( s \right) \right) \int_{s}^{t} \mu \left( J_{n} \left( r \right) \right) \, \dd r} \, \dd s.
\end{align*}
The normalized random parameter $\phi_{t} \left( J_{n} \right)$ induces a sequence of probability measures $\left\lbrace \nu_{n} \right\rbrace_{n \in \mathbb{N}}$ on $\mathbb{R}$ via $\nu_{n} \left( B \right) = \mathbb{P} \left( \phi_{t} \left( J_{n} \right) \in B \right)$ for Borel sets $B \subset \mathbb{R}$.

We will assume that the sequence of probability measures $\left\lbrace \nu_{n} \right\rbrace_{n \in \mathbb{N}}$ satisfies an LDP with rate function $\psi$. 
Note that $\left\lbrace \nu_{n} \right\rbrace_{n \in \mathbb{N}}$ trivially satisfies an LDP when $\nu_{n} = \nu_{n+1}$ for all $n \in \mathbb{N}$, so this assumption covers the case in which the background process is not scaled.

Given the scaling, we denote the number of jobs in the system at time $t$ by $M_{n} \left( t \right)$ and consider the normalized random variable $\tfrac{1}{n} M_{n} \left( t \right)$. Our goal is to prove an LDP for $\tfrac{1}{n} M_{n} \left( t \right)$ and to describe the corresponding rate function.

Throughout this paper, we will also use the following notation. We denote the closure of a set $A$ by $\mathrm{cl} A$. We write $\openball{x}{\epsilon}$ for the open ball with center $x \in \rd$ and radius $\epsilon > 0$ and $\closedball{x}{\epsilon}$ for its closure. For notational convenience, we will sometimes write $\mathbb{R}_{+}$ for $\left[ 0,\infty \right)$, $\openballplus{x}{\epsilon}$ for $\openball{x}{\epsilon} \cap \mathbb{R}_{+}$ and $\closedballplus{x}{\epsilon}$ for $\closedball{x}{\epsilon} \cap \mathbb{R}_{+}$.
As is customary, we define $\exp \left( - \infty \right) = 0$ and $\log \left( 0 \right) = -\infty$.

\section{A large deviations principle}
In this section we will prove an LDP for the number of jobs in the system under a scaling of the arrival rates and the background process, i.e., we will prove an LDP for $\frac{1}{n} M_{n} \left( t \right)$. It will turn out that so-called attainable parameters determine the rate function corresponding to the LDP.
\begin{definition}
Given a scaling $J \mapsto J_{n}$, a real number $\gamma \in [ 0,\infty )$ is called an \emph{attainable parameter} at time $t \geq 0$ if for all $\epsilon > 0$ there exists $N_{\epsilon} \in \mathbb{N}$ such that $\mathbb{P} \left( \phi_{t} \left( J_{n} \right) \in \openball{\gamma}{\epsilon} \right) = \nu_{n} \left(  \openball{\gamma}{\epsilon} \right) > 0$ for all $n \geq N_{\epsilon}$. The set of all attainable parameters at time $t$ is denoted by $\mathcal{R} \left( t \right)$.
\end{definition}

The intuition behind attainable parameters is as follows. The number of jobs in the system has a Poisson distribution with a random parameter that is completely determined by the background process. Basically, the background process samples the Poisson parameter. A real number $\gamma$ is an attainable parameter if, for all $n$ large enough, the scaled background process samples parameters close to $\gamma$ with positive probability.

As mentioned before, we will prove an LDP for $\frac{1}{n} M_{n} \left( t \right)$ by scaling $\lambda \left( x \right) \mapsto n \lambda \left( x \right)$ and $J \mapsto J_{n}$ such that the sequence of probability measures $\left\lbrace \nu_{n} \right\rbrace_{n \in \mathbb{N}}$ satisfies an LDP with rate function $\psi$. The rate function $I \colon \mathbb{R} \to \left[ 0,\infty \right]$ governing the LDP for $\frac{1}{n} M_{n} \left( t \right)$ is given by
\begin{align}
I \left( a \right) = \inf_{\gamma \in \mathcal{R} \left( t \right)} \left[ \ell \left( \gamma ; a \right) + \psi \left( \gamma \right) \right], \label{eq:Idef}
\end{align}
where $\ell \left( \gamma ; \cdot \right)$ is the Fenchel-Legendre transform of the Poisson cumulant generating function with parameter $\gamma$. It will turn out (cf.\ Lemma~\ref{lem:Rnonempty}) that
\begin{align}
I \left( a \right) = \inf_{\gamma \in \mathcal{R} \left( t \right)} \left[ \ell \left( \gamma ; a \right) + \psi \left( \gamma \right) \right] = \inf_{\gamma \in \left\lbrace \psi < \infty \right\rbrace} \left[ \ell \left( \gamma ; a \right) + \psi \left( \gamma \right) \right]. \label{eq:Idef2}
\end{align}
However, we will take the infimum over $\mathcal{R} \left( t \right)$ rather than over $\left\lbrace \psi < \infty \right\rbrace$ to stress that attainability of parameters is the crucial property for proving the LDP.

Before we can give the proof, we have to settle some technical details.
First, it is not immediately clear whether the function $I$ is indeed a rate function or even whether $I$ is well defined. In particular, it is not clear whether $\mathcal{R} \left( t \right)$ is a non-empty set. However, the assumption that the sequence $\left\lbrace \nu_{n} \right\rbrace_{n \in \mathbb{N}}$ satisfies an LDP implies that $\mathcal{R} \left( t \right)$ is non-empty, as the following lemma shows.
\begin{lemma}
\label{lem:Rnonempty}
Let the scaling $J \mapsto J_{n}$ be such that $\left\lbrace \nu_{n} \right\rbrace_{n \in \mathbb{N}}$ satisfies an LDP with rate function $\psi$. Then $\mathcal{R} \left( t \right)$ is a non-empty closed subset of $\left[ 0,\infty \right)$ and $\left\lbrace \psi < \infty \right\rbrace \subset \mathcal{R} \left( t \right)$.
\end{lemma}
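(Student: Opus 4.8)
The plan is to establish three things: that $\mathcal{R}(t)$ is closed, that $\{\psi < \infty\} \subset \mathcal{R}(t)$, and that $\mathcal{R}(t)$ is non-empty (which then follows from the second inclusion together with the fact that a rate function cannot be identically $+\infty$ — at least not in a way that is compatible with the LDP lower bound applied to the whole space, since $\liminf \frac1n \log \nu_n(\mathbb{R}) = 0 \geq -\inf_{\gamma} \psi(\gamma)$ forces $\inf_\gamma \psi(\gamma) = 0$, so in particular $\{\psi < \infty\} \neq \emptyset$). So the substantive work is the closedness and the inclusion.

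First I would prove $\{\psi < \infty\} \subset \mathcal{R}(t)$. Suppose $\gamma \in [0,\infty)$ with $\psi(\gamma) < \infty$, and fix $\epsilon > 0$. Since $\openball{\gamma}{\epsilon}$ is open, the LDP lower bound gives
\begin{align*}
\liminf_{n \to \infty} \frac{1}{n} \log \nu_n\left( \openball{\gamma}{\epsilon} \right) \geq - \inf_{a \in \openball{\gamma}{\epsilon}} \psi(a) \geq -\psi(\gamma) > -\infty.
\end{align*}
If $\nu_n\left( \openball{\gamma}{\epsilon} \right) = 0$ for infinitely many $n$, then along that subsequence $\frac1n \log \nu_n\left( \openball{\gamma}{\epsilon} \right) = -\infty$, contradicting the finiteness of the $\liminf$. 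Hence there is $N_\epsilon$ with $\nu_n\left( \openball{\gamma}{\epsilon} \right) > 0$ for all $n \geq N_\epsilon$, which is exactly the definition of $\gamma \in \mathcal{R}(t)$. (I should also note $\mathcal{R}(t) \subset [0,\infty)$ by definition, since attainable parameters are required to lie in $[0,\infty)$; and the reverse inclusion in \eqref{eq:Idef2} is immediate once we know $\{\psi<\infty\} \subset \mathcal{R}(t)$, because the infimum over the larger set $\mathcal{R}(t)$ is $\leq$ the infimum over $\{\psi<\infty\}$, while on $\mathcal{R}(t) \setminus \{\psi < \infty\}$ the summand $\psi(\gamma) = +\infty$ so those points never lower the infimum — I would include this remark to justify \eqref{eq:Idef2}.)

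Next, closedness of $\mathcal{R}(t)$ in $[0,\infty)$. Let $\gamma$ be a limit point of $\mathcal{R}(t)$, say $\gamma_k \to \gamma$ with $\gamma_k \in \mathcal{R}(t)$, and fix $\epsilon > 0$. Choose $k$ large enough that $\gamma_k \in \openball{\gamma}{\epsilon/2}$, and then $\openball{\gamma_k}{\epsilon/2} \subset \openball{\gamma}{\epsilon}$ by the triangle inequality. Since $\gamma_k$ is attainable, there is $N$ with $\nu_n\left( \openball{\gamma_k}{\epsilon/2} \right) > 0$ for all $n \geq N$, hence $\nu_n\left( \openball{\gamma}{\epsilon} \right) \geq \nu_n\left( \openball{\gamma_k}{\epsilon/2} \right) > 0$ for all $n \geq N$. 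As $\epsilon$ was arbitrary, $\gamma \in \mathcal{R}(t)$. I expect the main obstacle to be purely bookkeeping around the edge cases — in particular making sure the non-emptiness argument is airtight (the cleanest route really is: the LDP lower bound at $G = \mathbb{R}$, or $G = [0,\infty)$, gives $0 \geq -\inf \psi$, so $\psi$ attains a finite value, so $\{\psi<\infty\} \neq \emptyset$, and then the inclusion just proved does the rest) — rather than any genuine analytic difficulty; every step is an elementary manipulation of the LDP bounds and monotonicity of measures.
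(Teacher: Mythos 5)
Your proposal is correct and uses essentially the same argument as the paper: the inclusion $\left\lbrace \psi < \infty \right\rbrace \subset \mathcal{R} \left( t \right)$ comes from the LDP lower bound applied to the open ball $\openball{\gamma}{\epsilon}$ (you argue the forward direction, the paper the contrapositive), and closedness is elementary topology. Your explicit justification that $\left\lbrace \psi < \infty \right\rbrace \neq \emptyset$ via the lower bound on $G = \mathbb{R}$ is a welcome extra step, since the paper merely asserts this from $\psi$ being a rate function.
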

\begin{proof}
Suppose that $\gamma \in \mathbb{R} \setminus \mathcal{R} \left( t \right)$. Then there exists $\epsilon > 0$ such that for all $n \in \mathbb{N}$ there exists $k_{n} \in \mathbb{N}$ such that $k_{n} \geq n$ and $\nu_{k_{n}} \left( \openball{\gamma}{\epsilon} \right) = 0$. This implies that $\openball{\gamma}{\epsilon} \subset \mathbb{R} \setminus \mathcal{R} \left( t \right)$, so $\mathcal{R} \left( t \right)$ is closed.
Moreover, we must have
\begin{align*}
\liminf_{n \to \infty} \frac{1}{n} \log \nu_{n} \left( \openball{\gamma}{\epsilon} \right) = -\infty = - \inf_{a \in \openball{\gamma}{\epsilon}} \psi \left( a \right),
\end{align*}
so $\psi \left( a \right) = \infty$ for all $a \in \openball{\gamma}{\epsilon}$. Then $\mathbb{R} \setminus \mathcal{R} \left( t \right) \subset \left\lbrace \psi = \infty \right\rbrace$ and $\left\lbrace \psi < \infty \right\rbrace \subset \mathcal{R} \left( t \right)$. The fact that $\psi$ is a rate function implies that $\left\lbrace \psi < \infty \right\rbrace$ is non-empty. The statement of the lemma follows immediately.
\end{proof}
From the previous lemma it follows that $I$ is a well defined function. The fact that $I$ is a rate function is implied by Proposition~\ref{pr:Iratefunction} and the functions $\ell$ and $\psi$ being rate functions.

The next lemma is a variation on Varadhan's Lemma. Contrary to Varadhan's Lemma, it does not require that a given function $f$ is continuous. Instead, it requires that a weaker condition is fulfilled. We will use this lemma to obtain the large deviations upper bound, by applying it to functions $f$ of the form described in Proposition~\ref{pr:verifyvaradhan}.
\begin{lemma}
\label{lem:varvar}
Let $\mathcal{X}$ be a topological space and let $\left\lbrace \xi_{n} \right\rbrace_{n \in \mathbb{N}}$ be a sequence of measures defined on its Borel $\sigma$-algebra. Suppose that $\left\lbrace \xi_{n} \right\rbrace_{n \in \mathbb{N}}$ satisfies an LDP with rate function $\varrho$. Let $f \colon \mathcal{X} \to \left[ -\infty , 0 \right]$ be a Borel measurable function such that $f^{-1} \left( \left[ a,b \right] \right)$ is a closed set for all $a,b \in \left( -\infty , 0 \right]$ satisfying
\begin{align*}
\sup_{x \in \mathcal{X}} \left[ f \left( x \right) - \varrho \left( x \right) \right] \leq a \leq b \leq 0.
\end{align*}
Then it holds that
\begin{align*}
\limsup_{n \to \infty} \frac{1}{n} \log \int_{\mathcal{X}} e^{n f \left( x \right)} \xi_{n} \left( \dd x \right) \leq \sup_{x \in \mathcal{X}} \left[ f \left( x \right) - \varrho \left( x \right) \right].
\end{align*}
\end{lemma}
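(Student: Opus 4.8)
The plan is to reduce this to the standard Varadhan Lemma upper bound by a truncation-and-limiting argument that exploits the hypothesis on the level sets $f^{-1}([a,b])$. Fix any $a$ with $\sup_{x}[f(x)-\varrho(x)] \le a \le 0$; write $L := \sup_{x}[f(x)-\varrho(x)]$, so $L \le a \le 0$. The idea is to bound the integral by splitting $\mathcal{X}$ according to the value of $f$ into the region $\{f \le a\}$ and a finite number of "slabs" $\{f \in [a + (j-1)\delta, a + j\delta]\}$ for $j = 1, \dots, m$ with $m\delta = -a$, plus possibly the leftover set $\{f = 0\}$ handled as the slab with $j=m$. On the region $\{f \le a\}$ we trivially have $\int_{\{f \le a\}} e^{nf}\,\dd\xi_n \le e^{na}$, which already contributes at most $a$ to the limsup.

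Next I would estimate each slab. The set $S_j := f^{-1}([a+(j-1)\delta, a+j\delta])$ is closed by hypothesis (since both endpoints lie in $[L,0] \subset (-\infty,0]$, provided we only ever use endpoints $\ge L$; if $a = L$ this needs the mild observation that we may take $a$ slightly larger than $L$ and let it decrease at the end, or note the hypothesis is stated with $a \ge L$ so the endpoints $a + (j-1)\delta \ge a \ge L$ are all admissible). On $S_j$ we have $f \le a + j\delta$, hence
\begin{align*}
\int_{S_j} e^{nf(x)}\,\xi_n(\dd x) \le e^{n(a+j\delta)}\,\xi_n(S_j).
\end{align*}
Applying the large deviations upper bound for $\{\xi_n\}$ to the closed set $S_j$ gives $\limsup_n \tfrac1n\log \xi_n(S_j) \le -\inf_{x \in S_j}\varrho(x)$. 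But on $S_j$ we have $f(x) \ge a+(j-1)\delta$, so $\varrho(x) \ge f(x) - L \ge a + (j-1)\delta - L$, whence $\inf_{x\in S_j}\varrho(x) \ge a+(j-1)\delta - L$ (if $S_j$ is nonempty; if empty the term is vacuous). Combining, the $j$-th slab contributes at most $a + j\delta - (a+(j-1)\delta - L) = L + \delta$ to the limsup of $\tfrac1n\log$ of its integral.

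Finally I would assemble the pieces. Since the integral over $\mathcal{X}$ is the sum of the integral over $\{f \le a\}$ and the at most $m$ slab integrals, and $\tfrac1n\log$ of a sum of finitely many terms has limsup equal to the max of the individual limsups, we get
\begin{align*}
\limsup_{n\to\infty}\frac1n\log\int_{\mathcal{X}} e^{nf(x)}\,\xi_n(\dd x) \le \max\left\{ a,\ L + \delta \right\} \le \max\{a, L+\delta\}.
\end{align*}
Now let $\delta \downarrow 0$ (keeping $a$ fixed), giving the bound $\max\{a, L\} = a$ since $L \le a$; then let $a \downarrow L$ to obtain the claimed bound $L = \sup_x[f(x)-\varrho(x)]$. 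One edge case to dispatch separately: if $L = \sup_x[f(x)-\varrho(x)] = -\infty$, the argument above with any finite $a$ still shows the limsup is $\le a$ for every $a \le 0$, hence $= -\infty$; alternatively one notes $f \equiv -\infty$ on $\{\varrho < \infty\}$ forces the integrand to vanish $\xi_n$-a.e. once one checks $\xi_n$ charges only sets where $\varrho < \infty$ in the relevant limit — cleanest is just to let $a \to -\infty$.

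The main obstacle I anticipate is the bookkeeping at the boundary endpoint $a = L$: the hypothesis only guarantees closedness of $f^{-1}([a,b])$ when $a \ge L$, so one must be careful that every slab endpoint used is $\ge L$, which is why the argument keeps $a$ strictly above $L$ until the very last limit. A secondary technical point is ensuring the slab decomposition genuinely covers $\mathcal{X}$ up to the set $\{f \le a\}$ and that finitely many slabs suffice — this works because $f$ takes values in $[-\infty, 0]$, so $\{f > a\} = \bigcup_{j=1}^m S_j^{\circ\text{-ish}}$ is covered by the closed slabs $S_1, \dots, S_m$ with $m = \lceil -a/\delta\rceil$. Everything else is the routine "finitely many exponentials" estimate.
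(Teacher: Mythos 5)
Your proof is correct. Note, however, that the paper does not actually prove this lemma itself: it disposes of it in one line by citing an external result (Lemma~2.2 of the reference \cite{JMDW2014}), so there is no internal argument to compare against. What you have written is a complete, self-contained proof along the classical lines of the Varadhan upper bound (the ``slab'' decomposition of $\mathcal{X}$ according to the value of $f$), and you correctly identify the one place where the weakened hypothesis enters: the standard argument only needs each slab $f^{-1}\left( \left[ a+(j-1)\delta , a+j\delta \right] \right)$ to be a \emph{closed} set so that the LDP upper bound applies to it, and this is exactly what the hypothesis supplies in place of continuity of $f$. The bookkeeping is sound: the slabs together with $\left\lbrace f \leq a \right\rbrace$ cover $\mathcal{X}$, each slab contributes at most $L + \delta$ to the limsup, the region $\left\lbrace f \leq a \right\rbrace$ contributes at most $a$, and the finitely-many-exponentials estimate plus the limits $\delta \downarrow 0$, $a \downarrow L$ finish the argument; the case $L = -\infty$ is handled. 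One small imprecision: the bound $\int_{\left\lbrace f \leq a \right\rbrace} e^{n f} \, \xi_{n} \left( \dd x \right) \leq e^{n a}$ tacitly assumes $\xi_{n} \left( \mathcal{X} \right) \leq 1$; this is harmless here since the paper defines LDPs only for probability measures (and in any case one can instead bound $\xi_{n} \left( \left\lbrace f \leq a \right\rbrace \right) \leq \xi_{n} \left( \mathcal{X} \right)$ and apply the LDP upper bound to the closed set $\mathcal{X}$, using $\varrho \geq 0$, to get the same contribution $a$). Your caution about keeping slab endpoints at least $L$ is unnecessary --- the hypothesis already admits $a = L$ --- but it does no harm.
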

\begin{proof}
This follows immediately from \cite[Lem.~2.2]{JMDW2014}
\end{proof}

With these technical details settled, we can prove the following LDP. Its proof exploits two elementary observations. First, conditional on a value of the random parameter $\phi_{t} \left( J_{n} \right)$, the number of jobs in the system has the same distribution as the number of jobs in the system in the \mminf setting. Second, the number of jobs in the system in the \mminf setting has the same distribution as a sum of i.i.d.\ Poisson random variables. Combined with some analytical results, these observations enable us to prove the LDP.

\begin{theorem}
\label{thm:LDP}
Consider a modulated infinite-server queue $\left( J , \lambda , \kappa , \mu \right)$ as described in Section~\ref{sec:modprob}. Scale $\lambda \left( x \right) \mapsto n \lambda \left( x \right)$ and $J \mapsto J_{n}$ such that $\left\lbrace \nu_{n} \right\rbrace_{n \in \mathbb{N}}$ satisfies an LDP with rate function $\psi$. Then the rescaled number of jobs in the system $\frac{1}{n} M_{n} \left ( t \right)$ satisfies an LDP with rate function $I$ as defined in equation~\eqref{eq:Idef}, so
\begin{align}
\limsup_{n \to \infty} \frac{1}{n} \log \mathbb{P} \left( \frac{1}{n} M_{n} \left ( t \right) \in F \right) &\leq - \inf_{a \in F} I \left( a \right) \label{eq:upperbound}
\intertext{for any closed set $F \subset \mathbb{R}$ and}
\liminf_{n \to \infty} \frac{1}{n} \log \mathbb{P} \left( \frac{1}{n} M_{n} \left ( t \right) \in G \right) &\geq - \inf_{a \in G} I \left( a \right) \label{eq:lowerbound}
\end{align}
for any open set $G \subset \mathbb{R}$.
\end{theorem}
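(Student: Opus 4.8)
\medskip
\noindent \textbf{Proof proposal.} The plan is to condition on the random parameter $\phi_{t} \left( J_{n} \right)$, reducing the problem to Cram\'er-type large deviations for sums of i.i.d.\ Poisson variables, and then to average the parameter out --- for the upper bound through the weakened Varadhan Lemma~\ref{lem:varvar}, for the lower bound through a direct single-atom estimate. Concretely, by the computation in Section~\ref{sec:transient}, conditionally on $\phi_{t} \left( J_{n} \right) = \gamma$ the variable $M_{n} \left( t \right)$ is Poisson with mean $n \gamma$, hence equal in law to $\sum_{i=1}^{n} X_{i}$ with the $X_{i}$ i.i.d.\ Poisson$\left( \gamma \right)$; thus $\tfrac{1}{n} M_{n} \left( t \right)$ conditionally satisfies Cram\'er's theorem with rate function $\ell \left( \gamma ; \cdot \right)$. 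Since $\ell \left( \gamma ; \cdot \right)$ is convex with its unique zero at the mean $\gamma$, the Chernoff bound yields a constant-free exponential estimate
\begin{align*}
\mathbb{P} \left( \tfrac{1}{n} M_{n} \left( t \right) \in F \, \middle\vert \, \phi_{t} \left( J_{n} \right) = \gamma \right) \leq 2 \, e^{- n g_{F} \left( \gamma \right)}, \qquad g_{F} \left( \gamma \right) := \inf_{a \in F} \ell \left( \gamma ; a \right),
\end{align*}
for every closed $F \subset \mathbb{R}$, uniformly in $n$ and $\gamma$ (the one-sided Poisson tail bounds behind it are of the kind collected in Appendix~C), and conditioning gives the representation $\mathbb{P} \left( \tfrac{1}{n} M_{n} \left( t \right) \in F \right) = \int_{\mathbb{R}} \mathbb{P} \left( \tfrac{1}{n} M_{n} \left( t \right) \in F \, \middle\vert \, \phi_{t} \left( J_{n} \right) = \gamma \right) \nu_{n} \left( \dd \gamma \right)$ that drives both halves of the proof.

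For the upper bound~\eqref{eq:upperbound}, insert the Chernoff estimate to obtain $\mathbb{P} \left( \tfrac{1}{n} M_{n} \left( t \right) \in F \right) \leq 2 \int_{\mathbb{R}} e^{- n g_{F} \left( \gamma \right)} \nu_{n} \left( \dd \gamma \right)$ and apply Lemma~\ref{lem:varvar} with $\xi_{n} = \nu_{n}$, $\varrho = \psi$ and $f = - g_{F}$: this $f$ maps into $\left[ -\infty , 0 \right]$ and is Borel measurable, being an infimum of the continuous maps $\gamma \mapsto \ell \left( \gamma ; a \right)$, and the remaining hypothesis --- closedness of $f^{-1} \left( \left[ a , b \right] \right)$ for $a \geq \sup_{\gamma} \left[ f \left( \gamma \right) - \psi \left( \gamma \right) \right]$ --- is exactly what Proposition~\ref{pr:verifyvaradhan} is meant to supply. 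Lemma~\ref{lem:varvar} then gives
\begin{align*}
\limsup_{n \to \infty} \frac{1}{n} \log \mathbb{P} \left( \tfrac{1}{n} M_{n} \left( t \right) \in F \right) \leq \sup_{\gamma \in \mathbb{R}} \left[ - g_{F} \left( \gamma \right) - \psi \left( \gamma \right) \right] = - \inf_{a \in F} \, \inf_{\gamma \in \mathbb{R}} \left[ \ell \left( \gamma ; a \right) + \psi \left( \gamma \right) \right],
\end{align*}
and since $\psi = \infty$ off $\left\lbrace \psi < \infty \right\rbrace \subset \mathcal{R} \left( t \right) \subset \left[ 0 , \infty \right)$ by Lemma~\ref{lem:Rnonempty}, the inner infimum equals $I \left( a \right)$ via~\eqref{eq:Idef2}; the factor $2$ vanishes after dividing by $n$.

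For the lower bound~\eqref{eq:lowerbound}, fix an open $G$, a point $a \in G$ and a parameter $\gamma \in \mathcal{R} \left( t \right)$ with $\psi \left( \gamma \right) < \infty$; since $- \inf_{a \in G} I \left( a \right) = \sup_{a \in G} \sup_{\gamma \in \mathcal{R} \left( t \right)} \left( - \ell \left( \gamma ; a \right) - \psi \left( \gamma \right) \right)$ and terms with $\psi \left( \gamma \right) = \infty$ are vacuous, it suffices to show $\liminf_{n \to \infty} \tfrac{1}{n} \log \mathbb{P} \left( \tfrac{1}{n} M_{n} \left( t \right) \in G \right) \geq - \ell \left( \gamma ; a \right) - \psi \left( \gamma \right)$. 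Choose $\delta > 0$ with $\openball{a}{\delta} \subset G$ and $\epsilon > 0$; restricting the conditional representation to $\left\lbrace \phi_{t} \left( J_{n} \right) \in \openball{\gamma}{\epsilon} \right\rbrace$ and bounding the conditional probability below by one Poisson atom $\mathbb{P} \left( \mathrm{Poisson} \left( n \gamma' \right) = k_{n} \right)$ with an integer $k_{n}$ satisfying $k_{n} / n \in \openball{a}{\delta}$ (possible once $n$ is large), a Stirling estimate gives, uniformly over $\gamma' \in \openball{\gamma}{\epsilon}$,
\begin{align*}
\frac{1}{n} \log \mathbb{P} \left( \mathrm{Poisson} \left( n \gamma' \right) = k_{n} \right) \geq - \ell \left( \gamma ; a \right) - C \epsilon - o \left( 1 \right),
\end{align*}
using continuity of $\ell \left( \cdot ; a \right)$ on $\left( 0 , \infty \right)$ (the degenerate cases $\gamma = 0$ and $a = 0$ being handled directly through $\mathbb{P} \left( \mathrm{Poisson} \left( n \gamma' \right) = 0 \right) = e^{- n \gamma'}$). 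Hence $\mathbb{P} \left( \tfrac{1}{n} M_{n} \left( t \right) \in G \right) \geq \nu_{n} \left( \openball{\gamma}{\epsilon} \right) \, \inf_{\gamma' \in \openball{\gamma}{\epsilon}} \mathbb{P} \left( \mathrm{Poisson} \left( n \gamma' \right) = k_{n} \right)$; taking $\tfrac{1}{n} \log$, invoking the LDP lower bound $\liminf_{n \to \infty} \tfrac{1}{n} \log \nu_{n} \left( \openball{\gamma}{\epsilon} \right) \geq - \inf_{b \in \openball{\gamma}{\epsilon}} \psi \left( b \right) \geq - \psi \left( \gamma \right)$ (the definition of $\mathcal{R} \left( t \right)$ ensures the ball has positive mass for $n$ large), and letting $\epsilon \downarrow 0$ yields the claim.

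The step I expect to be the main obstacle is the upper bound, and within it the verification of the hypotheses of Lemma~\ref{lem:varvar} for $f = - g_{F}$: the map $g_{F} = \inf_{a \in F} \ell \left( \cdot ; a \right)$ is in general only upper semicontinuous, so its sublevel sets need not be closed, which is precisely why classical Varadhan's Lemma is unavailable and why Lemma~\ref{lem:varvar} together with Proposition~\ref{pr:verifyvaradhan} is needed. A secondary technical point, shared by both bounds, is the uniformity in $\gamma'$ of the Poisson large deviations estimates, supplied by the Appendix~C lemmas, together with the boundary cases $\gamma = 0$ and $a = 0$.
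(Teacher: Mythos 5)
Your proposal is correct and follows essentially the same route as the paper's proof: conditioning on $\phi_{t}\left(J_{n}\right)$, a Chernoff/Cram\'{e}r bound plus Lemma~\ref{lem:varvar} (with hypotheses checked via Proposition~\ref{pr:verifyvaradhan}) for the upper bound, and restriction to a small parameter ball around an attainable $\gamma$ combined with the LDP for $\left\lbrace\nu_{n}\right\rbrace$ for the lower bound. The only difference is cosmetic: where you obtain the uniform-in-$\gamma'$ Poisson lower bound directly from a single-atom Stirling estimate and continuity of $\ell\left(\cdot\,;a\right)$, the paper reduces the infimum over the ball to its endpoints $\lambda^{\pm}_{\epsilon}$ by monotonicity and then applies Cram\'{e}r's theorem (Propositions~\ref{pr:restrictedinf}--\ref{pr:largedevopenint}), which amounts to the same estimate.
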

\begin{proof}
For $\lambda \geq 0$, let $P_{0} \left( \lambda \right) , P_{1} \left( \lambda \right), P_{2} \left( \lambda \right) , \ldots$ denote a sequence of i.i.d.\ random variables that have a Poisson distribution with parameter $\lambda$. Let $F \subset \mathbb{R}$ be a closed set and let $G \subset \mathbb{R}$ be an open set.

To prove the upper~bound~\eqref{eq:upperbound}, recall that $M_{n} \left( t \right)$ has a Poisson distribution with random parameter $n \phi_{t} \left( J_{n} \right)$. Then we may write
\begin{align*}
\limsup_{n \to \infty} \frac{1}{n} \log \mathbb{P} \left( \frac{1}{n} M_{n} \left( t \right) \in F \right)
&= \limsup_{n \to \infty} \frac{1}{n} \log \int_{\left[ 0,\infty \right)} \mathbb{P} \left( \frac{1}{n} P_{0} \left( n \gamma \right) \in F \right) \, \nu_{n} \left( \dd \gamma \right) \\
&= \limsup_{n \to \infty} \frac{1}{n} \log \int_{\left[ 0,\infty \right)} \mathbb{P} \left( \frac{1}{n} \sum_{i=1}^{n} P_{i} \left( \gamma \right) \in F \right) \, \nu_{n} \left( \dd \gamma \right) \\
&\leq \limsup_{n \to \infty} \frac{1}{n} \log \int_{\left[ 0,\infty \right)} 2 e^{n \left[ -\inf_{a \in F} \ell \left( \gamma ; a \right) \right]} \, \nu_{n} \left( \dd \gamma \right)\\
&= \limsup_{n \to \infty} \frac{1}{n} \log \int_{\left[ 0,\infty \right)} e^{n \left[ -\inf_{a \in F} \ell \left( \gamma ; a \right) \right]} \, \nu_{n} \left( \dd \gamma \right).
\end{align*}
The inequality above is an immediate result of the proof of Cram\'{e}r's~Theorem in $\mathbb{R}$ as provided in \cite{dz1998}.

According to Proposition~\ref{pr:verifyvaradhan}, the function $\gamma \mapsto -\inf_{a \in F} \ell \left( \gamma ; a \right)$ satisfies the assumptions of Lemma~\ref{lem:varvar}. Moreover, $\left\lbrace \nu_{n} \right\rbrace_{n \in \mathbb{N}}$ satisfies an LDP both in $\mathbb{R}$ and in $\left[ 0,\infty \right)$ with rate function $\psi$ (cf.\ \cite[Lem.~4.1.5]{dz1998}). Hence, we may apply Lemma~\ref{lem:varvar} to obtain
\begin{align*}
\limsup_{n \to \infty} \frac{1}{n} \log \mathbb{P} \left( \frac{1}{n} M_{n} \left( t \right) \in F \right)
&\leq \limsup_{n \to \infty} \frac{1}{n} \log \int_{\left[ 0,\infty \right)} e^{n \left[ -\inf_{a \in F} \ell \left( \gamma ; a \right) \right]} \, \nu_{n} \left( \dd \gamma \right)\\
&\leq \sup_{\gamma \in \left[ 0,\infty \right)} \left[ -\inf_{a \in F} \ell \left( \gamma ; a \right) - \psi \left( \gamma \right) \right]\\
&= -\inf_{a \in F} \inf_{\gamma \in \left[ 0,\infty \right)} \left[ \ell \left( \gamma ; a \right) + \psi \left( \gamma \right) \right]\\
&= -\inf_{a \in F} \inf_{\gamma \in \mathcal{R} \left( t \right)} \left[ \ell \left( \gamma ; a \right) + \psi \left( \gamma \right) \right]\\
&= -\inf_{a \in F} I \left( a \right).
\end{align*}
The fact that we only have to consider the infimum over $\mathcal{R} \left( t \right)$ follows from Lemma~\ref{lem:Rnonempty}. This proves the upper bound.

To prove the lower~bound~\eqref{eq:lowerbound}, let $\lambda \in \mathcal{R} \left( t \right)$ and $\epsilon > 0$. Define $\lambda^{-}_{\epsilon} = \max \left\lbrace 0 , \lambda - \epsilon \right\rbrace$ and $\lambda^{+}_{\epsilon} = \lambda + \epsilon$. By definition of the set $\mathcal{R} \left( t \right)$ there exists $N_{\epsilon}$ such that $\mathbb{P} \left( \phi_{t} \left( J_{n} \right) \in \openball{\lambda}{\epsilon} \right) > 0$ for all $n \geq N_{\epsilon}$.

Fix $x \in G$. Because $G$ is open, there exists $\delta > 0$ such that $\openball{x}{\delta} \subset G$. Observe that
\begin{align*}
\mathbb{P} \left( \frac{1}{n} M_{n} \left( t \right) \in G \right) 
&\geq \mathbb{P} \left( \frac{1}{n} M_{n} \left( t \right) \in \openball{x}{\delta} \right) \\
&\geq \mathbb{P} \left( \frac{1}{n} M_{n} \left( t \right) \in \openball{x}{\delta} \, ; \, \phi_{t} \left( J_{n} \right) \in \openball{\lambda}{\epsilon} \right) \\
&= \mathbb{P} \left( \frac{1}{n} M_{n} \left( t \right) \in \openball{x}{\delta} \, \middle\vert \, \phi_{t} \left( J_{n} \right) \in \openball{\lambda}{\epsilon} \right) \mathbb{P} \left( \phi_{t} \left( J_{n} \right) \in \openball{\lambda}{\epsilon} \right)
\end{align*}
for all $n \geq N_{\epsilon}$, where the equality follows from the fact that $\mathbb{P} \left( \phi_{t} \left( J_{n} \right) \in \openball{\lambda}{\epsilon} \right) > 0$ for all $n \geq N_{\epsilon}$. Then we get
\begin{align*}
\liminf_{n \to \infty} \frac{1}{n} \log \mathbb{P} \left( \frac{1}{n} M_{n} \left( t \right) \in G \right) &\geq \\
\liminf_{n \to \infty} \frac{1}{n} \log \mathbb{P} \left( \frac{1}{n} M_{n} \left( t \right) \in \openball{x}{\delta} \, \middle\vert \, \phi_{t} \left( J_{n} \right) \in \openball{\lambda}{\epsilon} \right) \\
+ \liminf_{n \to \infty} \frac{1}{n} \log \mathbb{P} \left( \phi_{t} \left( J_{n} \right) \in \openball{\lambda}{\epsilon} \right).
\end{align*}
Recall that $\phi_{t} \left( J_{n} \right)$ satisfies an LDP with rate function $\psi$, so
\begin{align*}
\liminf_{n \to \infty} \frac{1}{n} \log \mathbb{P} \left( \phi_{t} \left( J_{n} \right) \in \openball{\lambda}{\epsilon} \right) \geq - \inf_{a \in \openball{\lambda}{\epsilon}} \psi \left( a \right)
\end{align*}
by assumption. Moreover, it holds that
\begin{align*}
\liminf_{n \to \infty} \frac{1}{n} \log \mathbb{P} \left( \frac{1}{n} M_{n} \left( t \right) \in \openball{x}{\delta} \, \middle\vert \, \phi_{t} \left( J_{n} \right) \in \openball{\lambda}{\epsilon} \right) &= \\
\liminf_{n \to \infty} \frac{1}{n} \log \mathbb{P} \left( \frac{1}{n} M_{n} \left( t \right) \in \openball{x}{\delta} \, \middle\vert \, \phi_{t} \left( J_{n} \right) \in \openball{\lambda}{\epsilon} \cap \mathbb{R}_{+} \right) &\geq \\
\liminf_{n \to \infty} \inf_{\gamma \in \openball{\lambda}{\epsilon} \cap \mathbb{R}_{+}} \frac{1}{n} \log \mathbb{P} \left( \frac{1}{n} \sum_{i = 1}^{n} P_{i} \left( \gamma \right) \in \openball{x}{\delta} \right) &= \\
\min_{\gamma \in \left\lbrace \lambda^{-}_{\epsilon} , \lambda^{+}_{\epsilon} \right\rbrace} \left[ - \inf_{a \in \openball{x}{\delta}} \ell \left( \gamma ; a \right) \right].
\end{align*}
The equality above is established in Proposition~\ref{pr:largedevopenint}. Combining the results, we obtain that
\begin{align*}
\mathbb{P} \left( \frac{1}{n} M_{n} \left( t \right) \in G \right)
\geq \min_{\gamma \in \left\lbrace \lambda^{-}_{\epsilon} , \lambda^{+}_{\epsilon} \right\rbrace} \left[ - \inf_{a \in \openball{x}{\delta}} \ell \left( \gamma ; a \right) \right]  - \inf_{a \in \openball{\lambda}{\epsilon}} \psi \left( a \right).
\end{align*}
This holds for all $\epsilon > 0$ and small enough $\delta > 0$. Taking limits, we get
\begin{align*}
\lim_{\epsilon \downarrow 0} \min_{\gamma \in \left\lbrace \lambda^{-}_{\epsilon} , \lambda^{+}_{\epsilon} \right\rbrace} \left[ - \inf_{a \in \openball{x}{\delta}} \ell \left( \gamma ; a \right) \right] = - \inf_{a \in \openball{x}{\delta}} \ell \left( \lambda ; a \right)
\end{align*}
thanks to Proposition~\ref{pr:verifyvaradhan} and
\begin{align*}
\lim_{\epsilon \downarrow 0} \inf_{a \in \openball{\lambda}{\epsilon}} \psi \left( a \right) = \psi \left( \lambda \right),
\end{align*}
because $\psi$ is lower semi-continuous. Similarly, we get $\lim_{\delta \downarrow 0} \inf_{a \in \openball{x}{\delta}} \ell \left( \lambda ; a \right) = \ell \left( \lambda ; x \right)$. Hence, it follows that
\begin{align*}
\mathbb{P} \left( \frac{1}{n} M_{n} \left( t \right) \in G \right)
&\geq \lim_{\delta \downarrow 0} \lim_{\epsilon \downarrow 0} \left[ \min_{\gamma \in \left\lbrace \lambda^{-}_{\epsilon} , \lambda^{+}_{\epsilon} \right\rbrace} \left[ - \inf_{a \in \openball{x}{\delta}} \ell \left( \gamma ; a \right) \right]  - \inf_{a \in \openball{\lambda}{\epsilon}} \psi \left( a \right) \right]\\
&= - \left[ \ell \left( \lambda ; x \right) + \psi \left( \lambda \right) \right].
\end{align*}
Since $x \in G$ and $\lambda \in \mathcal{R} \left( t \right)$ were arbitrary, we obtain
\begin{align*}
\mathbb{P} \left( \frac{1}{n} M_{n} \left( t \right) \in G \right)
&\geq \sup_{a \in G} \sup_{\lambda \in \mathcal{R} \left( t \right)} \left[ - \left[ \ell \left( \lambda ; x \right) + \psi \left( \lambda \right) \right] \right]\\
&= - \inf_{a \in G} I \left( a \right),
\end{align*}
which completes the proof.
\end{proof}
The proof of Theorem~\ref{thm:LDP} contains familiar elements. First, the upper bound is proved using a Chernoff bound combined with a variation on Varadhan's Lemma. Second, the lower bound is proved by considering `the most likely of all unlikely scenarios', which is similar to the method used in \cite{BDKM2014} and \cite{BM2013}. However, the proofs there relied on properties of irreducible continuous-time Markov chains and the computation of optimal paths, whereas we consider general c\`{a}dl\`{a}g background processes via attainable parameters.

\section{Examples: unscaled background processes}
\label{sec:exunscaled}
Given the scaling $\lambda \mapsto n \lambda$ and $J \mapsto J_{n}$, Theorem~\ref{thm:LDP} provides a full LDP for $\frac{1}{n} M_{n} \left( t \right)$ and describes the corresponding rate function. In the upcoming examples we will consider cases in which the background process is not scaled and we will use Theorem~\ref{thm:LDP} to verify or extend known results and to obtain new results.

Throughout this section we will assume that the background process is not scaled, i.e., $J_{n} = J$ for all $n \in \mathbb{N}$ for some c\`{a}dl\`{a}g stochastic process $J$. The following lemma is trivial, but plays a central role in this section.
\begin{lemma}
\label{lem:unscaledldp}
If $J_{n} = J$ for all $n \in \mathbb{N}$, then the sequence $\left\lbrace \phi_{t} \left( J_{n} \right) \right\rbrace_{n \in \mathbb{N}}$ satisfies an LDP with rate function $\psi$. In this case it holds that $\mathcal{R} \left( t \right) = \left\lbrace \psi < \infty \right\rbrace = \left\lbrace \psi = 0 \right\rbrace$.
\end{lemma}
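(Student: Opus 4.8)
The plan is to observe that in the unscaled case the sequence of measures $\left\lbrace \nu_{n} \right\rbrace_{n \in \mathbb{N}}$ is \emph{constant}, equal to $\nu$ with $\nu \left( B \right) = \mathbb{P} \left( \phi_{t} \left( J \right) \in B \right)$, to verify directly that a constant sequence satisfies an LDP with the ``support indicator'' rate function, and then to identify all three sets in the statement with $\mathrm{supp}\, \nu$.

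Concretely, set $\psi_{0} \left( \gamma \right) = 0$ for $\gamma \in \mathrm{supp}\, \nu$ and $\psi_{0} \left( \gamma \right) = \infty$ otherwise; this is a genuine (lower semi-continuous) rate function because $\mathrm{supp}\, \nu$ is closed. First I would check the upper bound: for closed $F \subset \mathbb{R}$, if $F \cap \mathrm{supp}\, \nu \neq \emptyset$ then $\inf_{a \in F} \psi_{0} \left( a \right) = 0$ while $\limsup_{n} \tfrac{1}{n} \log \nu_{n} \left( F \right) = \limsup_{n} \tfrac{1}{n} \log \nu \left( F \right) \leq 0$; if $F \cap \mathrm{supp}\, \nu = \emptyset$ then $F$ lies in the open set $\left( \mathrm{supp}\, \nu \right)^{c}$, which is $\nu$-null because $\mathbb{R}$ is second countable, so $\nu \left( F \right) = 0$ and $\tfrac{1}{n} \log \nu \left( F \right) = -\infty = - \inf_{a \in F} \psi_{0} \left( a \right)$. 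Next the lower bound: for open $G \subset \mathbb{R}$, if $G \cap \mathrm{supp}\, \nu \neq \emptyset$ pick $\gamma \in G \cap \mathrm{supp}\, \nu$; then $\nu \left( G \right) > 0$ since $G$ is an open neighbourhood of a support point, so $\liminf_{n} \tfrac{1}{n} \log \nu_{n} \left( G \right) = 0 = - \psi_{0} \left( \gamma \right) \geq - \inf_{a \in G} \psi_{0} \left( a \right)$; and if $G \cap \mathrm{supp}\, \nu = \emptyset$ the bound is vacuous as $\inf_{a \in G} \psi_{0} \left( a \right) = \infty$. This establishes the LDP with rate function $\psi_{0}$. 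Since a lower semi-continuous rate function governing an LDP on a metric space is uniquely determined by the sequence of measures, the $\psi$ of the standing assumption equals $\psi_{0}$; in particular $\left\lbrace \psi < \infty \right\rbrace = \left\lbrace \psi = 0 \right\rbrace = \mathrm{supp}\, \nu$. (Alternatively one may simply take $\psi := \psi_{0}$ from the outset, since the unscaled setting is precisely the one in which we are free to choose the rate function witnessing the assumed LDP.)

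It remains to show $\mathcal{R} \left( t \right) = \mathrm{supp}\, \nu$. The inclusion $\mathrm{supp}\, \nu = \left\lbrace \psi < \infty \right\rbrace \subset \mathcal{R} \left( t \right)$ is immediate from Lemma~\ref{lem:Rnonempty}. For the reverse inclusion, suppose $\gamma \in \mathcal{R} \left( t \right)$; then for every $\epsilon > 0$ there is $N_{\epsilon}$ with $\nu_{n} \left( \openball{\gamma}{\epsilon} \right) > 0$ for $n \geq N_{\epsilon}$, and since $\nu_{n} = \nu$ this says exactly that $\nu \left( \openball{\gamma}{\epsilon} \right) > 0$ for every $\epsilon > 0$, i.e.\ $\gamma \in \mathrm{supp}\, \nu$. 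Hence $\mathcal{R} \left( t \right) \subset \mathrm{supp}\, \nu$, and the three sets coincide.

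This is the ``trivial'' lemma flagged in the text, so no step is a real obstacle; the only points deserving care are the appeal to second countability of $\mathbb{R}$ to make $\left( \mathrm{supp}\, \nu \right)^{c}$ a null set (needed for the upper bound on closed sets missing the support) and the uniqueness of the lower semi-continuous rate function, which is what lets us call $\psi_{0}$ ``the'' rate function $\psi$.
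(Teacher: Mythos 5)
Your proof is correct. The paper offers no proof of this lemma (it is explicitly flagged as trivial), and your argument --- the sequence of measures is constant, a constant sequence satisfies an LDP with the support-indicator rate function $\psi_{0}$, uniqueness of lower semi-continuous rate functions on a metric space forces $\psi = \psi_{0}$, and $\mathcal{R} \left( t \right) = \mathrm{supp}\, \nu$ follows directly from the definition of attainability when $\nu_{n} = \nu$ --- is exactly the intended one, with the appeals to second countability and to rate-function uniqueness being the right points to make explicit.
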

Hence, when the background process is not scaled, we have the special property that $\mathcal{R} \left( t \right) = \left\lbrace \psi = 0 \right\rbrace$. This will enable us to compute explicit rate functions in the examples. In these computations, we will extensively use the following properties of the rate function $I$ and properties of step functions in Skorokhod space.

Recall that the rate function $I$ is given by
\begin{align*}
I \left( a \right) = \inf_{\gamma \in \mathcal{R} \left( t \right)} \left[ \ell \left( \gamma ; a \right) + \psi \left( \gamma \right) \right],
\end{align*}
and that $\mathcal{R} \left( t \right) = \left\lbrace \psi = 0 \right\rbrace$ (see Lemma~\ref{lem:unscaledldp}). Hence, we get
\begin{align}
I \left( a \right) = \inf_{\gamma \in \mathcal{R} \left( t \right)} \ell \left( \gamma ; a \right). \label{eq:Iunscaled}
\end{align}
In this case, we can give a simpler and more explicit description of $I$, using the following properties of the function $\ell$.

For $\gamma \geq 0$, the function $\ell \left( \gamma ; \cdot \right)$ is the Fenchel-Legendre transform of the Poisson cumulant generating function with parameter $\gamma$ and is given by
\begin{align}
\ell \left( \gamma ; a \right) = \begin{cases} \infty & a < 0;\\ \gamma & a = 0;\\ \gamma - a + a \log \left( a / \gamma \right) & a > 0. \end{cases} \label{eq:poisratefunc}
\end{align}
For $\gamma = 0$ and $a > 0$, we understand that $\gamma - a + a \log \left( a / \gamma \right) = \infty$. An important observation is that the following inequalities hold for $0 \leq \gamma_{1} \leq \gamma_{2} < \infty$:
\begin{align}
\ell \left( \gamma_{1} ; a \right) \leq \ell \left( \gamma_{2} ; a \right) & & & \forall a \in \left[ 0,\gamma_{1} \right]; \label{eq:ell_leq} \\
\ell \left( \gamma_{1} ; a \right) \geq \ell \left( \gamma_{2} ; a \right) & & & \forall a \in \left[ \gamma_{2},\infty \right). \label{eq:ell_geq}
\end{align}
See Figure~\ref{fig:graphell} for an illustration.

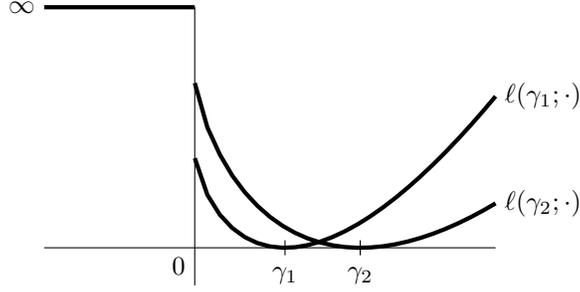
\begin{figure}
\centering
\begin{tikzpicture}[scale=1]
\draw [thin] (-2,0)--(4,0);
\draw [thin] (0,-0.5)--(0,3.2);

\draw [ultra thick, domain=-2:0]   plot(\x, {3.2});

\draw [ultra thick, domain=0.001:4]     plot(\x, {1.2 - \x + \x*ln(\x/1.2)});
\draw [ultra thick, domain=0.001:4]     plot(\x, {2.2 - \x + \x*ln(\x/2.2)});

\draw [thin] (1.2,0.1)--(1.2,-0.1);
\draw [thin] (2.2,0.1)--(2.2,-0.1);

\node [above] at (1.2,-0.6) {$\gamma_{1}$};
\node [above] at (2.2,-0.6) {$\gamma_{2}$};

\node [right] at (4,2.016)  {$\ell \left( \gamma_{1} ; \cdot \right)$};
\node [right] at (4,0.591)  {$\ell \left( \gamma_{2} ; \cdot \right)$};

\node [left] at (-2,3.2) {$\infty$};
\node [below left] at (0,0) {$0$};

\end{tikzpicture}
\caption{Graphs of the functions $\ell \left( \gamma_{1} ; \cdot \right)$ and $\ell \left( \gamma_{2} ; \cdot \right)$ for $ 0 < \gamma_{1} < \gamma_{2} < \infty$}
\label{fig:graphell}
\end{figure}

Because in the present case $I$ is just an infimum of Poisson rate functions, these inequalities imply that $I$ has some special properties. They are described in the following proposition.
\begin{proposition}
\label{pr:propertiesofI}
In the present case, $I \left( a \right) = 0$ if and only if $a \in \mathcal{R} \left( t \right)$. If $I \left( a \right) > 0$ for some $a \in \mathbb{R}$, then exactly one of the following three scenerios is true:
\begin{enumerate}
\item $a < c_{-} = \inf \mathcal{R} \left( t \right)$ and $I \left( b \right) = \ell \left( c_{-} ; b \right)$ for all $b \in \left( -\infty,c_{-} \right]$;
\item $a > c_{+} = \sup \mathcal{R} \left( t \right)$ and $I \left( b \right) = \ell \left( c_{+} ; b \right)$ for all $b \in \left[ c_{+},\infty \right)$;
\item the previous two cases do not hold and $I \left( b \right) = \min \left\lbrace \ell \left( c_{-} ; b \right) , \ell \left( c_{+} ; b \right) \right\rbrace$ for all $b \in \left[ c_{-} , c_{+} \right]$, where $c_{-} = \sup \left( \mathcal{R} \left( t \right) \cap \left( -\infty,a \right) \right)$ and $c_{+} = \sup  \left( \mathcal{R} \left( t \right) \cap \left( a,\infty \right) \right)$.
\end{enumerate}
\end{proposition}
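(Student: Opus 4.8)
The plan is to reduce the whole statement to one elementary fact about the Poisson rate function and then read off the three scenarios from the order structure of the closed set $\mathcal{R} \left( t \right)$. The engine is this: for fixed $a > 0$ the map $\gamma \mapsto \ell \left( \gamma ; a \right)$ has derivative $1 - a/\gamma$ on $\left( 0 , \infty \right)$, hence is strictly decreasing on $\left( 0 , a \right]$, strictly increasing on $\left[ a , \infty \right)$, and vanishes only at $\gamma = a$, while $\ell \left( 0 ; a \right) = \infty$; for $a = 0$ one has $\ell \left( \gamma ; 0 \right) = \gamma$, and for $a < 0$ one has $\ell \left( \gamma ; a \right) = \infty$ for every $\gamma$. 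This ``unimodality in $\gamma$'' is nothing but a sharpening of the monotonicity relations \eqref{eq:ell_leq}--\eqref{eq:ell_geq}. Since $\mathcal{R} \left( t \right)$ is a non-empty closed subset of $\left[ 0 , \infty \right)$ (Lemma~\ref{lem:Rnonempty} with Lemma~\ref{lem:unscaledldp}), for any real $b$ the sets $\mathcal{R} \left( t \right) \cap \left( -\infty , b \right]$ and $\mathcal{R} \left( t \right) \cap \left[ b , \infty \right)$ are closed, so whenever non-empty the first has a largest element and the second a smallest element. Combining this with the unimodality of $\ell \left( \cdot ; b \right)$, the infimum $I \left( b \right) = \inf_{\gamma \in \mathcal{R} \left( t \right)} \ell \left( \gamma ; b \right)$ (recall \eqref{eq:Iunscaled}) is always attained, and it is attained at one of the at most two points of $\mathcal{R} \left( t \right)$ nearest to $b$ from the left and from the right. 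Everything below is bookkeeping around this observation.

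For the first claim, the implication ``$a \in \mathcal{R} \left( t \right) \Rightarrow I \left( a \right) = 0$'' is immediate from $I \left( a \right) \leq \ell \left( a ; a \right) = 0$. For the converse I would argue by contraposition: if $a \notin \mathcal{R} \left( t \right)$ and $a < 0$, then $I \left( a \right) = \infty$; if $a \geq 0$, closedness of $\mathcal{R} \left( t \right)$ produces $\epsilon > 0$ with $\left( a - \epsilon , a + \epsilon \right) \cap \mathcal{R} \left( t \right) = \emptyset$, so every $\gamma \in \mathcal{R} \left( t \right)$ lies in $\left[ 0 , a - \epsilon \right]$ or in $\left[ a + \epsilon , \infty \right)$, whence by the engine $\ell \left( \gamma ; a \right) \geq \min \left\lbrace \ell \left( \max \left\lbrace 0 , a - \epsilon \right\rbrace ; a \right) , \ell \left( a + \epsilon ; a \right) \right\rbrace$, a strictly positive constant; hence $I \left( a \right) > 0$.

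For the second claim, assume $I \left( a \right) > 0$, equivalently $a \notin \mathcal{R} \left( t \right)$. Since $\mathcal{R} \left( t \right)$ is closed and bounded below, $\inf \mathcal{R} \left( t \right) = \min \mathcal{R} \left( t \right) \in \mathcal{R} \left( t \right)$, so $a \neq \inf \mathcal{R} \left( t \right)$, and likewise $a \neq \sup \mathcal{R} \left( t \right)$ when the latter is finite; the three possibilities $a < \inf \mathcal{R} \left( t \right)$, $\sup \mathcal{R} \left( t \right) < \infty$ with $a > \sup \mathcal{R} \left( t \right)$, and $\inf \mathcal{R} \left( t \right) < a < \sup \mathcal{R} \left( t \right)$ are then mutually exclusive and exhaustive and correspond to scenarios~1--3. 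In scenario~1, for every $b \leq c_{-} = \inf \mathcal{R} \left( t \right)$ all of $\mathcal{R} \left( t \right)$ lies in $\left[ c_{-} , \infty \right) \subseteq \left[ b , \infty \right)$ where $\ell \left( \cdot ; b \right)$ is non-decreasing, so the infimum is attained at $\gamma = c_{-}$ and $I \left( b \right) = \ell \left( c_{-} ; b \right)$, with the boundary subcases $b < 0$ and $b = 0$ checked directly from \eqref{eq:poisratefunc}. Scenario~2 is the mirror image with $c_{+} = \sup \mathcal{R} \left( t \right) = \max \mathcal{R} \left( t \right)$: for $b \geq c_{+}$ all of $\mathcal{R} \left( t \right)$ lies in $\left( 0 , b \right]$ plus possibly the point $0$ where $\ell \left( \cdot ; b \right) = \infty$, so the infimum is attained at $\gamma = c_{+}$ and $I \left( b \right) = \ell \left( c_{+} ; b \right)$. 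In scenario~3 I would put $c_{-} = \sup \left( \mathcal{R} \left( t \right) \cap \left( -\infty , a \right) \right)$ and $c_{+} = \inf \left( \mathcal{R} \left( t \right) \cap \left( a , \infty \right) \right)$, check that both sets are non-empty, that $c_{\pm} \in \mathcal{R} \left( t \right)$ by closedness, that $c_{-} < a < c_{+}$, and that $\left( c_{-} , c_{+} \right)$ contains no point of $\mathcal{R} \left( t \right)$; then for $b \in \left[ c_{-} , c_{+} \right]$ I would split $\mathcal{R} \left( t \right)$ at this gap into its part in $\left( -\infty , c_{-} \right]$ and its part in $\left[ c_{+} , \infty \right)$, note by the engine that the infimum of $\ell \left( \cdot ; b \right)$ over the first part equals $\ell \left( c_{-} ; b \right)$ and over the second equals $\ell \left( c_{+} ; b \right)$, and conclude $I \left( b \right) = \min \left\lbrace \ell \left( c_{-} ; b \right) , \ell \left( c_{+} ; b \right) \right\rbrace$.

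The mathematics here is light; the only thing needing care is the bookkeeping around attainment of the infima — which is exactly where closedness of $\mathcal{R} \left( t \right)$ enters, via the largest/smallest elements of $\mathcal{R} \left( t \right) \cap \left( -\infty , b \right]$ and $\mathcal{R} \left( t \right) \cap \left[ b , \infty \right)$ — and around the two places where $\ell$ equals $\infty$, namely $b < 0$ and $\gamma = 0$; these boundary values must be treated separately in each scenario but never spoil the stated formulas. I would also double-check the endpoint convention in scenario~3: the right endpoint making $I \left( b \right) = \min \left\lbrace \ell \left( c_{-} ; b \right) , \ell \left( c_{+} ; b \right) \right\rbrace$ correct on $\left[ c_{-} , c_{+} \right]$ is the \emph{nearest} attainable parameter above $a$, i.e.\ $c_{+} = \inf \left( \mathcal{R} \left( t \right) \cap \left( a , \infty \right) \right)$, since with the supremum instead the identity would fail at any point of $\mathcal{R} \left( t \right)$ lying strictly between that infimum and the supremum.
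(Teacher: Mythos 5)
Your proposal is correct and follows essentially the same route as the paper's own (much terser) proof: reduce to the unimodality of $\gamma \mapsto \ell \left( \gamma ; a \right)$ encoded in the inequalities \eqref{eq:ell_leq}--\eqref{eq:ell_geq}, use closedness of $\mathcal{R} \left( t \right)$ to get attainment at the nearest attainable parameters on either side, and then do the case bookkeeping. Your remark about the endpoint convention in scenario~3 is also well taken: as written the statement's $c_{+} = \sup \left( \mathcal{R} \left( t \right) \cap \left( a,\infty \right) \right)$ is a typo for $\inf \left( \mathcal{R} \left( t \right) \cap \left( a,\infty \right) \right)$, as one can confirm against Example~\ref{ex:visualratefun}, where the relevant right endpoint on $\left( \beta , \gamma \right)$ is $\gamma$ and not $\delta$.
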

\begin{proof}
It follows immediately from equations \eqref{eq:Iunscaled} and \eqref{eq:poisratefunc} that $I \left( a \right) = 0$ if and only if $a \in \mathcal{R} \left( t \right)$. Hence, $I \left( a \right) > 0$ implies that the distance of $a$ to $\mathcal{R} \left( t \right)$ is strictly positive, since $\mathcal{R} \left( t \right)$ is closed. The three scenarios now follow from the inequalities \eqref{eq:ell_leq} and  \eqref{eq:ell_geq}.
\end{proof}
The previous proposition may seem rather abstract. To get some intuition, the following example describes a typical rate function.
\begin{example}
\label{ex:visualratefun}
Suppose that $\mathcal{R} \left( t \right) = \left[ \alpha,\beta \right] \cup \left[ \gamma,\delta \right]$ for some $0 < \alpha < \beta < \gamma < \delta < \infty$. Then the function $I$ looks like the graph shown in Figure~\ref{fig:visualI}: it equals $0$ on the intervals $[\alpha,\beta]$ and $[\gamma,\delta]$, whereas it equals the minimum of $\ell \left( \beta ; \cdot \right)$ and $\ell \left( \gamma ; \cdot \right)$ on the interval $\left( \beta,\gamma \right)$ in between. On the interval $\left( -\infty,\alpha \right]$ the function $I$ equals $\ell \left( \alpha ; \cdot \right)$ and on the interval $\left[ \delta,\infty \right)$ the function $I$ equals $\ell \left( \delta ; \cdot \right)$.
\end{example}

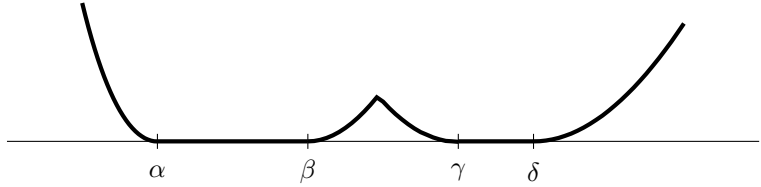
\begin{figure}
\centering
\begin{tikzpicture}[scale=2]
\draw [thin] (0,0)--(5,0);
\draw [thin] (1,0.05)--(1,-0.05);
\draw [thin] (2,0.05)--(2,-0.05);
\draw [thin] (3,0.05)--(3,-0.05);
\draw [thin] (3.5,0.05)--(3.5,-0.05);

\node at (1,-0.2)   {$\alpha$};
\node at (2,-0.2)   {$\beta$};
\node at (3,-0.2)   {$\gamma$};
\node at (3.5,-0.2) {$\delta$};

\draw [ultra thick, domain=0.5:1]   plot(\x, {6*( 1 - \x + \x*ln(\x/1) )});
\draw [ultra thick, domain=1:2]     plot(\x, {0});
\draw [ultra thick, domain=2:3]     plot(\x, {6*min( 2 - \x + \x*ln(\x/2) , 3 - \x + \x*ln(\x/3)});
\draw [ultra thick, domain=3:3.5]   plot(\x, {0});
\draw [ultra thick, domain=3.5:4.5] plot(\x, {6*( 3.5 - \x + \x*ln(\x/3.5) )});
\end{tikzpicture}
\caption{Visualization of the function $I$ in Example~\ref{ex:visualratefun}}
\label{fig:visualI}
\end{figure}

To compute $\mathcal{R} \left( t \right)$, it is often convenient to use the following properties of step functions in $D \left( \left[ 0,\infty \right) ; \mathcal{E} \right)$. (For the definition of a step function, see Section~\ref{sec:stepskor}.) Recall that the set of all step functions in $D \left( \left[ 0,\infty \right) ; \mathcal{E} \right)$ is denoted by $\mathcal{S} \left( \left[ 0,\infty \right) ; \mathcal{E} \right)$. 
\begin{lemma}
\label{lem:rtclofstep}
If $\left\lbrace \phi_{t} \left( f \right) \middle\vert f \in \mathcal{S} \left( \left[ 0,\infty \right) ; \mathcal{E} \right) \right\rbrace \subset \mathcal{R} \left( t \right)$, then
\begin{align*}
\mathcal{R} \left( t \right) = \mathrm{cl} \left\lbrace \phi_{t} \left( f \right) \middle\vert f \in \mathcal{S} \left( \left[ 0,\infty \right) ; \mathcal{E} \right) \right\rbrace = \left\lbrace \phi_{t} \left( f \right) \middle\vert f \in \mathcal{D} \left( \left[ 0,\infty \right) ; \mathcal{E} \right) \right\rbrace.
\end{align*}
\end{lemma}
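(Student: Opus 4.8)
The plan is to obtain the two equalities from a short chain of inclusions, using two elementary facts recorded at the outset. First, by Lemma~\ref{lem:unscaledldp} the (constant) sequence $\left\lbrace \nu_{n} \right\rbrace$ satisfies an LDP, so Lemma~\ref{lem:Rnonempty} applies and $\mathcal{R} \left( t \right)$ is closed. Second, because $\nu_{n} = \nu$ for every $n$ in the unscaled regime of this section, the definition of an attainable parameter simplifies: $\gamma \in \mathcal{R} \left( t \right)$ if and only if $\mathbb{P} \left( \phi_{t} \left( J \right) \in \openball{\gamma}{\epsilon} \right) > 0$ for all $\epsilon > 0$, i.e.\ $\mathcal{R} \left( t \right)$ is precisely the topological support of the law of $\phi_{t} \left( J \right)$. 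Abbreviate $S := \left\lbrace \phi_{t} \left( f \right) : f \in \mathcal{S} \left( \left[ 0,\infty \right) ; \mathcal{E} \right) \right\rbrace$ and $A := \left\lbrace \phi_{t} \left( f \right) : f \in \mathcal{D} \left( \left[ 0,\infty \right) ; \mathcal{E} \right) \right\rbrace$; the hypothesis is $S \subseteq \mathcal{R} \left( t \right)$, and trivially $S \subseteq A$.

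The chain of inclusions I would set up is the following. (i) Since $\mathcal{R} \left( t \right)$ is closed and contains $S$, it contains $\mathrm{cl}\, S$. (ii) For $A \subseteq \mathrm{cl}\, S$: every c\`{a}dl\`{a}g path can be approximated on $\left[ 0,t \right]$ by step functions in the Skorokhod topology and $\phi_{t}$ is continuous along such approximating sequences (this is exactly what the results on step functions and continuity in Skorokhod space of Section~\ref{sec:stepskor} provide, noting that $\phi_{t} \left( f \right)$ depends only on $f$ restricted to $\left[ 0,t \right]$), so for any $f \in \mathcal{D} \left( \left[ 0,\infty \right) ; \mathcal{E} \right)$ there are step functions $f_{n}$ with $\phi_{t} \left( f_{n} \right) \to \phi_{t} \left( f \right)$, whence $\phi_{t} \left( f \right) \in \mathrm{cl}\, S$. (iii) For $\mathcal{R} \left( t \right) \subseteq \mathrm{cl}\, A$: if $\gamma \in \mathcal{R} \left( t \right)$ and $\epsilon > 0$, then $\mathbb{P} \left( \phi_{t} \left( J \right) \in \openball{\gamma}{\epsilon} \right) > 0$, so at least one realization of the c\`{a}dl\`{a}g process $J$ satisfies $\phi_{t} \left( J \right) \in \openball{\gamma}{\epsilon}$; that realization is an element of $\mathcal{D} \left( \left[ 0,\infty \right) ; \mathcal{E} \right)$, so $\openball{\gamma}{\epsilon} \cap A \neq \emptyset$, and letting $\epsilon \downarrow 0$ gives $\gamma \in \mathrm{cl}\, A$. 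Since $A \subseteq \mathrm{cl}\, S$ implies $\mathrm{cl}\, A \subseteq \mathrm{cl}\, S$, items (i)--(iii) close the loop:
\[
\mathrm{cl}\, S \subseteq \mathcal{R} \left( t \right) \subseteq \mathrm{cl}\, A \subseteq \mathrm{cl}\, S ,
\]
so $\mathcal{R} \left( t \right) = \mathrm{cl}\, S = \mathrm{cl}\, A$, and in particular $S \subseteq A \subseteq \mathrm{cl}\, S$.

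What remains — and what I expect to be the genuine obstacle — is to upgrade $A \subseteq \mathrm{cl}\, S$ to the equality $A = \mathrm{cl}\, S$, equivalently to show that $A = \phi_{t} \left( \mathcal{D} \left( \left[ 0,\infty \right) ; \mathcal{E} \right) \right)$ is closed. Concretely, given step functions $f_{n}$ with $\phi_{t} \left( f_{n} \right) \to \gamma$, one must exhibit a single c\`{a}dl\`{a}g path $f$ with $\phi_{t} \left( f \right) = \gamma$. Since $\mathcal{D} \left( \left[ 0,\infty \right) ; \mathcal{E} \right)$ need not be compact (for instance when $\mathcal{E}$ is not), this cannot be obtained merely by extracting a Skorokhod-convergent subsequence from $\left( f_{n} \right)$; it requires the finer analysis of $\phi_{t}$ on step functions carried out in Section~\ref{sec:stepskor} — for example an interpolation argument showing that $A$ is an interval, or a direct construction of the limiting path out of the $f_{n}$. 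The remaining parts are then just soft topology together with the support characterization of $\mathcal{R} \left( t \right)$ noted at the start.
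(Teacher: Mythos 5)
Your chain of inclusions is essentially the argument the paper intends: its entire proof of this lemma is the citation of Lemma~\ref{lem:Rnonempty}, Corollary~\ref{co:stepdense} and Lemma~\ref{lem:phictu}, which unpacks to your steps (i) and (ii) (closedness of $\mathcal{R} \left( t \right)$ gives $\mathrm{cl}\, S \subseteq \mathcal{R} \left( t \right)$; density of step functions plus continuity of $\phi_{t}$ gives $A \subseteq \mathrm{cl}\, S$), while your step (iii) --- $\mathcal{R} \left( t \right) \subseteq \mathrm{cl}\, A$ via the observation that in the unscaled regime $\mathcal{R} \left( t \right)$ is the support of the law of $\phi_{t} \left( J \right)$, whose mass sits on $A$ --- is the piece the paper leaves implicit. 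So you have a complete and correct proof of $\mathcal{R} \left( t \right) = \mathrm{cl}\, S = \mathrm{cl}\, A$, by the same route as the paper.

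The obstacle you flag at the end is genuine, and it is worth saying plainly that the paper's own one-line proof does not close it either: none of the three cited results implies that $A = \left\lbrace \phi_{t} \left( f \right) \middle\vert f \in D \left( \left[ 0,\infty \right) ; \mathcal{E} \right) \right\rbrace$ is closed, and without further hypotheses it need not be. For instance, with $\mathcal{E} = \left( 0,1 \right]$, $\lambda \left( x \right) = x$ and $\kappa \equiv 0$ one has $\phi_{t} \left( f \right) = \int_{0}^{t} f \left( s \right) \dd s$, so $A = S = \left( 0,t \right]$ while $\mathrm{cl}\, S = \left[ 0,t \right]$. The most one can extract from the paper's toolkit is the interpolation construction in the proof of Lemma~\ref{lem:rtclosedint}, which shows that $S$ (hence $A$, squeezed between $S$ and $\mathrm{cl}\, S$) is an interval; this reduces the question to whether $\inf S$ and $\sup S$ are attained by actual c\`{a}dl\`{a}g paths. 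That attainment holds in all of the paper's examples, where the extremal parameters are realized by explicit constant or step paths, but it is not a consequence of the stated hypotheses. So: the first equality and $\mathcal{R} \left( t \right) = \mathrm{cl}\, A$ are fully established by your argument; the literal second equality either needs a closure on the right-hand side or an additional attainment argument for the endpoints. You identified precisely the step the paper glosses over, and your suggested remedies (the interval structure of $A$, plus a direct construction of an extremal path) are the right ones.
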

\begin{proof}
This follows from Lemma~\ref{lem:Rnonempty}, Corollary~\ref{co:stepdense} and Lemma~\ref{lem:phictu}.
\end{proof}
\begin{lemma}
\label{lem:rtclosedint}
If $\left\lbrace \phi_{t} \left( f \right) \middle\vert f \in \mathcal{S} \left( \left[ 0,\infty \right) ; \mathcal{E} \right) \right\rbrace \subset \mathcal{R} \left( t \right)$, then $\mathcal{R} \left( t \right)$ is a closed interval.
\end{lemma}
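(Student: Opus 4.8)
The plan is to prove that $\mathcal{R}(t)$ is convex; since Lemma~\ref{lem:Rnonempty} already tells us that $\mathcal{R}(t)$ is a non-empty closed subset of $[0,\infty)$, convexity immediately upgrades this to ``closed interval'' (possibly a single point or a half-line). The mechanism for producing new attainable parameters strictly between two given ones is to splice step functions in time.

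Fix $f_{1},f_{2}\in\mathcal{S}([0,\infty);\mathcal{E})$ and assume without loss of generality that $\phi_{t}(f_{1})\le\phi_{t}(f_{2})$. For $u\in[0,t]$ let $g_{u}$ be the path that equals $f_{1}$ on $[0,u)$ and equals $f_{2}$ on $[u,\infty)$; this $g_{u}$ is again a c\`{a}dl\`{a}g step function, so the hypothesis of the lemma gives $\phi_{t}(g_{u})\in\mathcal{R}(t)$. Because the right-hand side of \eqref{eq:randomparameter} depends on the path only through its values on $[0,t)$, we have $\phi_{t}(g_{0})=\phi_{t}(f_{2})$ and $\phi_{t}(g_{t})=\phi_{t}(f_{1})$. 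I would then show that $u\mapsto\phi_{t}(g_{u})$ is continuous on $[0,t]$: substituting $g_{u}$ into \eqref{eq:randomparameter} and splitting the outer integral at $u$ writes $\phi_{t}(g_{u})$ as $\int_{0}^{u}(\cdots)\,\dd s+\int_{u}^{t}(\cdots)\,\dd s$, where each integrand is uniformly bounded in $u$ (the values $\lambda(g_{u}(s))$ range over a single finite set and the exponential factor is at most $1$) and, for each fixed $s$, depends continuously on $u$ — note that $u$ appears both as an endpoint of integration and, via the inner integral $\int_{s}^{t}\mu(g_{u}(r))\,\dd r$, inside the integrand. Dominated convergence then yields continuity, and the intermediate value theorem gives $[\phi_{t}(f_{1}),\phi_{t}(f_{2})]\subseteq\{\phi_{t}(g_{u})\mid u\in[0,t]\}\subseteq\mathcal{R}(t)$.

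To finish, take arbitrary $\gamma_{1}\le\gamma_{2}$ in $\mathcal{R}(t)$ and $a\in(\gamma_{1},\gamma_{2})$. By Lemma~\ref{lem:rtclofstep}, whose hypothesis is exactly the hypothesis here, $\mathcal{R}(t)=\mathrm{cl}\{\phi_{t}(f)\mid f\in\mathcal{S}([0,\infty);\mathcal{E})\}$, so I can choose step functions $f_{1},f_{2}$ with $\phi_{t}(f_{1})<a<\phi_{t}(f_{2})$; the previous paragraph then puts $a$ in $\mathcal{R}(t)$. Hence $\mathcal{R}(t)$ is convex, and together with Lemma~\ref{lem:Rnonempty} this shows it is a closed interval.

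The only step I expect to require care is the continuity of $u\mapsto\phi_{t}(g_{u})$, precisely because $u$ enters \eqref{eq:randomparameter} simultaneously as a limit of integration and inside the inner integral; it is nonetheless routine once one observes that all the integrands are uniformly bounded. It is worth noting that the seemingly slicker route — claiming $g_{u}\to g_{u_{0}}$ in the Skorokhod topology and appealing to continuity of $\phi_{t}$ on Skorokhod space — fails, since splicing typically creates two nearby jumps that the $J_{1}$ topology does not merge into one, so the direct estimate appears to be needed.
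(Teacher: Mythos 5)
Your proof is correct and follows essentially the same route as the paper: splice the two step functions at a variable time, observe the spliced path is again a step function whose parameter value varies continuously with the splice point, apply the Intermediate Value Theorem, and invoke Lemma~\ref{lem:rtclofstep} to pass to the closure. Your explicit dominated-convergence justification of the continuity (and the remark that the Skorokhod-continuity shortcut fails) is a welcome elaboration of what the paper leaves implicit.
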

\begin{proof}
It suffices to show that $\mathcal{R} \left( t \right)$ is convex. Let $f^{1}_{\mathrm{c}}, f^{2}_{\mathrm{c}} \in  \mathcal{S} \left( \left[ 0,\infty \right) ; \mathcal{E} \right)$. We may assume that $\phi_{t} \left( f^{1}_{\mathrm{c}} \right) \leq \phi_{t} \left( f^{2}_{\mathrm{c}} \right)$. For $x \in \left[ 0,t \right]$ we define the function $g_{x}$ via
\begin{align*}
g_{x} \left( s \right) = \ind{s < x} f^{1}_{\mathrm{c}} \left( s \right) + \ind{s \geq x} f^{2}_{\mathrm{c}} \left( s \right)
\end{align*}
for $s \in \left[ 0,\infty \right)$. Clearly, $g_{x} \in \mathcal{S} \left( \left[ 0,\infty \right) ; \mathcal{E} \right)$ and
\begin{align*}
\phi_{t} \left( g_{x} \right) = \phi_{x} \left( f^{1}_{\mathrm{c}} \right) + \left( \phi_{t} \left( f^{2}_{\mathrm{c}} \right) - \phi_{x} \left( f^{2}_{\mathrm{c}} \right) \right).
\end{align*}
Using the continuity of the integral and applying the Intermediate Value Theorem, it follows that
\begin{align*}
\left[ \phi_{t} \left( f^{1}_{\mathrm{c}} \right) , \phi_{t} \left( f^{2}_{\mathrm{c}} \right) \right] \subset \left\lbrace \phi_{t} \left( g_{x} \right) \middle\vert x \in \left[ 0,t \right] \right\rbrace \subset \mathcal{R} \left( t \right).
\end{align*}
Combined with Lemma~\ref{lem:rtclofstep}, this implies the statement of the lemma.
\end{proof}

Let $f_{\mathrm{c}} \in \mathcal{S} \left( \left[ 0,\infty \right) ; \mathcal{E} \right)$ be a step function. Clearly, $f_{\mathrm{c}}$ has a unique minimal representation $\left\lbrace \left( t_{i} , \alpha_{i} \right) \right\rbrace_{i=0}^{k}$, where $k \in \mathbb{N}$, $0 = t_{0} < t_{1} < \ldots < t_{k} < \infty$ and $\alpha_{0} , \ldots , \alpha_{k} \in \mathcal{E}$ are such that $f_{\mathrm{c}} \left( t \right) = \alpha_{i}$ for $t \in \left[ t_{i} , t_{i+1} \right)$ and $i = 0 , \ldots , k-1$ and $f_{\mathrm{c}} \left( t \right) = \alpha_{k}$ for $t \in \left[ t_{k} , \infty \right)$. Given this minimal representation, we define its truncated minimal step size by
\begin{align*}
\Delta_{f_{\mathrm{c}}} = 1 \wedge \min_{i = 1 , \ldots , k} \left\lbrace t_{i} - t_{i-1} \right\rbrace.
\end{align*}
Additionally, we define $t_{k+1} = t_{k} \vee t$. The truncated minimal step size and $t_{k+1}$ will be used for computing attainable parameters.

In the upcoming examples, we would like to compute rate functions via attainable parameters. To compute attainable parameters, we use the following strategy. We fix a certain path $f$, often a step function. This gives us a parameter value $\phi_{t} \left( f \right)$. Then we would like to show that, with positive probability, the background process stays `close' to $f$, which will imply that $\phi_{t} \left( f \right)$ is an attainable parameter. 

Staying `close' to $f$ depends on properties of $\mathcal{E}$ and the background process. In most cases, the background process needs a little bit of room (both in time and in space) to jump near a discontinuity of $f$. This is where the truncated minimal step size comes in: it is an upper bound on the time we give the background process for jumping near a discontinuity of a step function. The precise meaning of this will become clearer in the examples.

The first example treats the familiar case of a Markov-modulated infinite-server queue, i.e., the case in which the background process is an irreducible Markov chain. This case is partly studied in \cite{BDKM2014} (Model~I) and \cite{BM2013} (Model~II). In the example, we recover \cite[Th.~2]{BDKM2014} and \cite[Th.~1]{BM2013}. Additionally, we generalize these results to our model and extend them to a full LDP.
\begin{example}
\label{ex:markovmod}
Let $J$ be an irreducible, continuous-time Markov process with finite state space $\mathcal{E} = \left\lbrace 1 , \ldots , d \right\rbrace$ and consider the modulated infinite-server queue $\left( J , \lambda , \kappa , \mu \right)$. Given the scaling $\lambda \mapsto n \lambda$, Theorem~\ref{thm:LDP} (combined with Lemma~\ref{lem:unscaledldp}) shows that $\frac{1}{n} M_{n} \left( t \right)$ satisfies an LDP with rate function $I$. This rate function may be computed as follows.

Note that $D \left( \left[ 0,\infty \right) ; \mathcal{E} \right) = \mathcal{S} \left( \left[ 0,\infty \right) ; \mathcal{E} \right)$, since $\mathcal{E}$ is finite. Fix any function $g \in D \left( \left[ 0,\infty \right) ; \mathcal{E} \right)$ with minimal representation $\left\lbrace \left( t_{i} , \alpha_{i} \right) \right\rbrace_{i=0}^{k}$ and take any $\epsilon \in \left( 0 , 1 \right)$. 

Define $\mathcal{W} \left( g ; \epsilon \right)$ as the set of all $f \in D \left( \left[ 0,\infty \right) ; \mathcal{E} \right)$ such that
\begin{align*}
f \left( t \right) &= \alpha_{i-1} & &\, \forall  t \in \left[ t_{i-1} + \tfrac{\epsilon}{2} \tfrac{1}{k} \Delta_{g} , t_{i} - \tfrac{\epsilon}{2} \tfrac{1}{k} \Delta_{g} \right) \quad \forall i \in \left\lbrace 1 , \ldots , k \right\rbrace,\\
f \left( t \right) &= \alpha_{k} & &\, \forall  t \in \left[ t_{k} , t_{k+1} \right].
\end{align*}
Now note that
\begin{align*}
\sup_{f \in \mathcal{W} \left( g ; \epsilon \right)} \phi_{t} \left( f \right) \leq \phi_{t} \left( g \right) + \epsilon \max_{j \in \left\lbrace 1 , \ldots , d \right\rbrace} \lambda \left( j \right)
\intertext{and}
\inf_{f \in \mathcal{W} \left( g ; \epsilon \right)} \phi_{t} \left( f \right) \geq \phi_{t} \left( g \right) - \epsilon \max_{j \in \left\lbrace 1 , \ldots , d \right\rbrace} \lambda \left( j \right),
\end{align*}
so we can get both the supremum and the infimum arbitrarily close to $\phi_{t} \left( g \right)$ by taking $\epsilon$ small enough.

Observe that $\mathbb{P} \left( J \in \mathcal{W} \left( g ; \epsilon \right) \right) > 0$, thanks to the irreducibility of $J$. Consequently, $\mathcal{R} \left( t \right) = \left\lbrace \phi_{t} \left( g \right) \, \middle\vert \, g \in D \left( \left[ 0,\infty \right) ; \mathcal{E} \right) \right\rbrace$. Then Lemma~\ref{lem:rtclosedint} implies that $\mathcal{R} \left( t \right)$ is a closed interval. Using that $\mathcal{E}$ is finite, we immediately get
\begin{align*}
\mathcal{R} \left( t \right) = \left[ a_{-} , a_{+} \right],
\end{align*}
where $0 \leq a_{-} \leq a_{+} < \infty$ with $a_{-} = \inf_{g \in D \left( \left[ 0,\infty \right) ; \mathcal{E} \right)} \phi_{t} \left( g \right)$ and $a_{+} = \sup_{g \in D \left( \left[ 0,\infty \right) ; \mathcal{E} \right)} \phi_{t} \left( g \right)$. Now applying Proposition~\ref{pr:propertiesofI}, it follows that the rate function $I$ is given by
\begin{align}
I \left( a \right) = \begin{cases} \infty & a \in \left( -\infty,0 \right);\\ \ell \left( a_{-} ; a \right) & a \in \left[ 0,a_{-} \right];\\ 0 & a \in \left[ a_{-} , a_{+} \right];\\ \ell \left( a_{+} ; a \right) & a \in \left[ a_{+},\infty \right). \end{cases}
\end{align}
\end{example}
The result of the previous example depends neither on the initial distribution nor on the transition rate matrix of the irreducible Markov chain. Moreover, the analysis in the previous example implies the following lemma. It shows that we always obtain a good rate function when the background process has a finite state space.
\begin{lemma}
\label{lem:finitestatespace}
Let $J^{\left( 1 \right)}$ be a background process with finite state space $\mathcal{E}$ and let $J^{\left( 2 \right)}$ be an irreducible Markov chain with the same state space. Consider the two modulated infinite-server queues $\left( J^{\left( 1 \right)} , \lambda , \kappa , \mu \right)$ and $\left( J^{\left( 2 \right)} , \lambda , \kappa , \mu \right)$. Scaling $\lambda \mapsto n \lambda$, we obtain in both cases an LDP for the number of jobs in the system with corresponding rate functions $I^{\left( 1 \right)}$ and $I^{\left( 2 \right)}$. Then it holds that
$I^{\left( 1 \right)} \left( a \right) \geq I^{\left( 2 \right)} \left( a \right)$ for all $a \in \mathbb{R}$. In particular, both $I^{\left( 1 \right)}$ and $I^{\left( 2 \right)}$ are good rate functions.
\end{lemma}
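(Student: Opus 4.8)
The plan is to compare the two queues through their sets of attainable parameters and to deduce the inequality from the explicit formula \eqref{eq:Iunscaled} for the rate function in the unscaled regime. Since neither $J^{\left( 1 \right)}$ nor $J^{\left( 2 \right)}$ is scaled, Lemma~\ref{lem:unscaledldp} applies to both: each queue satisfies an LDP and $\mathcal{R}^{\left( i \right)} \left( t \right) = \left\lbrace \psi^{\left( i \right)} = 0 \right\rbrace$, so that $I^{\left( i \right)} \left( a \right) = \inf_{\gamma \in \mathcal{R}^{\left( i \right)} \left( t \right)} \ell \left( \gamma ; a \right)$ for $i \in \left\lbrace 1,2 \right\rbrace$. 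Because an infimum over a smaller index set is at least as large, the bound $I^{\left( 1 \right)} \left( a \right) \geq I^{\left( 2 \right)} \left( a \right)$ will follow once I show the inclusion $\mathcal{R}^{\left( 1 \right)} \left( t \right) \subseteq \mathcal{R}^{\left( 2 \right)} \left( t \right)$.

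To prove this inclusion, set $\mathcal{V} = \left\lbrace \phi_{t} \left( f \right) \,\middle\vert\, f \in \mathcal{S} \left( \left[ 0,\infty \right) ; \mathcal{E} \right) \right\rbrace$; this set is common to both queues, since it depends only on the triple $\left( \lambda , \kappa , \mu \right)$ shared by $\left( J^{\left( 1 \right)} , \lambda , \kappa , \mu \right)$ and $\left( J^{\left( 2 \right)} , \lambda , \kappa , \mu \right)$. Since $\mathcal{E}$ is finite, every c\`{a}dl\`{a}g $\mathcal{E}$-valued path is a step function on each compact time interval, so both $J^{\left( 1 \right)}$ and $J^{\left( 2 \right)}$ have trajectories in $\mathcal{S} \left( \left[ 0,\infty \right) ; \mathcal{E} \right)$ almost surely; as $\phi_{t}$ is a pathwise functional, this gives $\phi_{t} \left( J^{\left( i \right)} \right) \in \mathcal{V}$ almost surely for $i \in \left\lbrace 1,2 \right\rbrace$. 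Hence, if $\gamma \notin \mathrm{cl}\,\mathcal{V}$, then $\openball{\gamma}{\epsilon} \cap \mathcal{V} = \emptyset$ for some $\epsilon > 0$, whence $\mathbb{P} \left( \phi_{t} \left( J^{\left( i \right)} \right) \in \openball{\gamma}{\epsilon} \right) = 0$, which is incompatible with $\gamma$ being attainable; therefore $\mathcal{R}^{\left( i \right)} \left( t \right) \subseteq \mathrm{cl}\,\mathcal{V}$. On the other hand, Example~\ref{ex:markovmod} shows that for the irreducible chain $J^{\left( 2 \right)}$ this inclusion is an equality: there $\mathcal{R}^{\left( 2 \right)} \left( t \right) = \mathcal{V} = \mathrm{cl}\,\mathcal{V} = \left[ a_{-} , a_{+} \right]$, with $a_{-}$ and $a_{+}$ the infimum and supremum of $\phi_{t}$ over $\mathcal{S} \left( \left[ 0,\infty \right) ; \mathcal{E} \right)$, both finite since $\mathcal{E}$ is finite. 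Combining, $\mathcal{R}^{\left( 1 \right)} \left( t \right) \subseteq \mathrm{cl}\,\mathcal{V} = \mathcal{R}^{\left( 2 \right)} \left( t \right)$, and so $I^{\left( 1 \right)} \left( a \right) \geq I^{\left( 2 \right)} \left( a \right)$ for all $a \in \mathbb{R}$.

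It remains to argue goodness. For $I^{\left( 2 \right)}$ I would invoke the explicit formula from Example~\ref{ex:markovmod}: it vanishes on the compact interval $\left[ a_{-} , a_{+} \right]$, equals $\ell \left( a_{-} ; \cdot \right)$ on $\left( -\infty , a_{-} \right]$ and equals $\ell \left( a_{+} ; \cdot \right)$ on $\left[ a_{+} , \infty \right)$. By \eqref{eq:poisratefunc}, $\ell \left( a_{+} ; a \right) \to \infty$ as $a \to \infty$ and $\ell \left( a_{-} ; a \right) = \infty$ for $a < 0$, so each sublevel set $\left\lbrace I^{\left( 2 \right)} \leq c \right\rbrace$ is bounded; being also closed (as $I^{\left( 2 \right)}$ is lower semi-continuous), it is compact, so $I^{\left( 2 \right)}$ is good (the degenerate case $a_{-} = a_{+}$, where $I^{\left( 2 \right)} = \ell \left( a_{-} ; \cdot \right)$, being immediate). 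For $I^{\left( 1 \right)}$, the inequality just proved gives $\left\lbrace I^{\left( 1 \right)} \leq c \right\rbrace \subseteq \left\lbrace I^{\left( 2 \right)} \leq c \right\rbrace$, and $\left\lbrace I^{\left( 1 \right)} \leq c \right\rbrace$ is closed because $I^{\left( 1 \right)}$ is a rate function; a closed subset of a compact set is compact, so $I^{\left( 1 \right)}$ is good as well.

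The only step I expect to need genuine care is the inclusion $\mathcal{R}^{\left( 1 \right)} \left( t \right) \subseteq \mathcal{R}^{\left( 2 \right)} \left( t \right)$: the observation that a finite-state c\`{a}dl\`{a}g process is pathwise a step function, so that $\phi_{t} \left( J^{\left( 1 \right)} \right)$ is supported in $\mathrm{cl}\,\mathcal{V}$, together with the fact — already carried out in Example~\ref{ex:markovmod} — that irreducibility makes the support of $\phi_{t} \left( J^{\left( 2 \right)} \right)$ as large as it can possibly be. Everything else is routine bookkeeping with \eqref{eq:Iunscaled} and the elementary properties \eqref{eq:poisratefunc} of $\ell$.
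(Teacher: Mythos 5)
Your proof is correct and follows essentially the route the paper intends: the paper offers no written proof beyond the remark that Example~\ref{ex:markovmod} implies the lemma, and your argument is precisely the natural fleshing-out of that remark — any attainable parameter for $J^{\left( 1 \right)}$ lies in the closure of $\left\lbrace \phi_{t} \left( f \right) \mid f \in \mathcal{S} \left( \left[ 0,\infty \right) ; \mathcal{E} \right) \right\rbrace$, which the example identifies with $\mathcal{R}^{\left( 2 \right)} \left( t \right) = \left[ a_{-} , a_{+} \right]$, so the inclusion of attainable-parameter sets plus equation~\eqref{eq:Iunscaled} gives the inequality, and goodness follows from the explicit form of $I^{\left( 2 \right)}$ together with closedness of the level sets of $I^{\left( 1 \right)}$. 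No gaps.
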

In the next example we will modulate an infinite-server queue by another Markov-modulated infinite-server queue. This setup differs from the setup considered in \cite{BDKM2014} and \cite{BM2013}. In particular, the state space of the background process is countably infinite, so that we may obtain a rate function that is not good.
\begin{example}
Consider a Markov-modulated infinite-server queue as described in \cite{OP1986}, i.e., a Markov-modulated infinite-server queue under the assumptions of Model~I. Assume that neither the arrival rates nor the server work rates are identically equal to $0$ and that the system starts empty. Let $J \left( t \right)$ be the number of jobs in this Markov-modulated infinite-server queue at time $t \geq 0$. Then $J$ is a c\`{a}dl\`{a}g stochastic process and its state space is $\mathcal{E} = \mathbb{Z}_{\geqslant 0}$.

Consider the modulated infinite-server queue $\left( J , \lambda , \kappa , \mu \right)$ and impose the scaling $\lambda \mapsto n \lambda$. Then $\frac{1}{n} M_{n} \left( t \right)$ satisfies an LDP with rate function $I$, according to Theorem~\ref{thm:LDP} and Lemma~\ref{lem:unscaledldp}. This rate function may be computed as follows.

Recall that $J$ stays in state $m \in \mathcal{E}$ during $\left[ t , t + \Delta t \right]$ with positive probability for arbitrarily large $\Delta t$. Moreover, because neither the arrival rates nor the server work rates are identically equal to $0$, the process $J$ also has the following property. If $J \left( t \right) = m_{1}$ at time $t \geq 0$, then it jumps to state $m_{2} \in \mathcal{E}$ during $\left[ t , t + \Delta t \right]$ with positive probability for arbitrarily small $\Delta t$.

Roughly speaking, these two properties mean that the background process is irreducible, in the sense that it can jump to or stay in any state during any time interval we would like. Of course, this is very similar to the Markov chain being irreducible in the previous example. Consequently, our strategy for determining the attainable parameters will be very similar, although there are some subtleties related to the state space being infinite.

Fix any $g \in \mathcal{S}  \left( \left[ 0,\infty \right) ; \mathcal{E} \right)$ with minimal representation $\left\lbrace \left( t_{i} , \alpha_{i} \right) \right\rbrace_{i=0}^{k}$ and take any $\epsilon \in \left( 0,1 \right)$. Let $\mathcal{W} \left( g;\epsilon \right)$ denote the set of all $f \in D  \left( \left[ 0,\infty \right) ; \mathcal{E} \right)$ with
\begin{align*}
f \left( t \right) &= \alpha_{i-1} & &\, \forall  t \in \left[ t_{i-1} +  \tfrac{\epsilon}{2} \tfrac{1}{k} \Delta_{g} , t_{i} - \tfrac{\epsilon}{2} \tfrac{1}{k} \Delta_{g} \right) \quad \forall i \in \left\lbrace 1 , \ldots , k \right\rbrace,\\
f \left( t \right) &= \alpha_{k} & &\, \forall  t \in \left[ t_{k} , t_{k+1} \right],
\end{align*}
and
\begin{align*}
0 \leq f \left( t \right) \leq \alpha_{0} & & &\, \forall t \in \left[ 0,\tfrac{\epsilon}{2} \tfrac{1}{k} \Delta_{g} \right),\\
\alpha_{i-1} \wedge \alpha_{i} \leq f \left( t \right) \leq \alpha_{i-1} \vee \alpha_{i} & & &\, \forall t \in \left[ t_{i} - \tfrac{\epsilon}{2} \tfrac{1}{k} \Delta_{g} , t_{i} + \tfrac{\epsilon}{2} \tfrac{1}{k} \Delta_{g} \right)\\
& & & \, \forall i \in \left\lbrace 0 , \ldots , k-1 \right\rbrace,\\
\alpha_{k-1} \wedge \alpha_{k} \leq f \left( t \right) \leq \alpha_{k-1} \vee \alpha_{k} & & &\, \forall t \in \left[ t_{k} - \tfrac{\epsilon}{2} \tfrac{1}{k} \Delta_{g} , t_{k} \right].
\end{align*}
Then we have
\begin{align*}
\sup_{f \in \mathcal{W} \left( g ; \epsilon \right)} \phi_{t} \left( f \right) \leq \phi_{t} \left( g \right) + \epsilon \max_{i \in \left\lbrace 0 , \ldots , k \right\rbrace} \max_{j \in \left\lbrace 0 , \ldots , \alpha_{i} \right\rbrace} \lambda \left( j \right)
\intertext{and}
\inf_{f \in \mathcal{W} \left( g ; \epsilon \right)} \phi_{t} \left( f \right) \geq \phi_{t} \left( g \right) - \epsilon \max_{i \in \left\lbrace 0 , \ldots , k \right\rbrace} \max_{j \in \left\lbrace 0 , \ldots , \alpha_{i} \right\rbrace} \lambda \left( j \right).
\end{align*}

The two properties of the background process described above imply that $\mathbb{P} \left( J \in \mathcal{W} \left( g ; \epsilon \right) \right) > 0$. It follows that $\left\lbrace \phi_{t} \left( g \right) \, \middle\vert \, g \in \mathcal{S} \left( \left[ 0,\infty \right) ; \mathcal{E} \right) \right\rbrace \subset \mathcal{R} \left( t \right)$.
Write $a_{-} = \inf_{g \in D \left( \left[ 0,\infty \right) ; \mathcal{E} \right)} \phi_{t} \left( g \right)$ and $a_{+} = \sup_{g \in D \left( \left[ 0,\infty \right) ; \mathcal{E} \right)} \phi_{t} \left( g \right)$. Lemma~\ref{lem:rtclofstep} and Lemma~\ref{lem:rtclosedint} imply that $\mathcal{R} \left( t \right) = \left[ a_{-} , a_{+} \right]$ if $a_{+} < \infty$ and $\mathcal{R} \left( t \right) = \left[ a_{-} , \infty \right)$ if $a_{+} = \infty$. Hence,
\begin{align}
I \left( a \right) = \begin{cases} \infty & a \in \left( -\infty,0 \right);\\ \ell \left( a_{-} ; a \right) & a \in \left[ 0,a_{-} \right];\\ 0 & a \in \left[ a_{-} , a_{+} \right];\\ \ell \left( a_{+} ; a \right) & a \in \left[ a_{+},\infty \right) \end{cases}
\end{align}
if $a_{+} < \infty$ and
\begin{align}
I \left( a \right) = \begin{cases} \infty & a \in \left( -\infty,0 \right);\\ \ell \left( a_{-} ; a \right) & a \in \left[ 0,a_{-} \right];\\ 0 & a \in \left[ a_{-} , \infty \right) \end{cases}
\end{align}
if $a_{+} = \infty$. Note that $I$ is not a good rate funtion if $a_{+} = \infty$.
\end{example}
The previous example only depends on the state space being countable and discrete and on the background process being irreducible in the sense described above. Consequently, the same result holds for irreducible Markov processes with a countable, discrete state space.

In the last example of this section we compare rate functions that are obtained using two different background processes. One background process is a Markov chain, whereas the other background process is a reflected Brownian motion, which has an uncountable state space. It turns out that both background processes lead to the same LDP, even though the background processes are completely different. Apparently, two very different modulating processes may lead to the same rate function for the LDP, even if the arrival rates, service requirements and server work rates are nontrivial.
\begin{example}
Let $\mathcal{E} = \left[ 0,1 \right]$ be equipped with the Euclidean metric. Define $\lambda \colon \left[ 0,1 \right] \to \left[ 0,1 \right]$ by $\lambda \left( x \right) = x$, $\kappa \colon \left[ 0,1 \right] \to \left[ 0,1 \right]$ by $\kappa \left( x \right) = 1$ and $\mu \colon \left[ 0,1 \right] \to \left[ 0,1 \right]$ by $\mu \left( x \right) = 1 - x$.

Let $J^{\mathrm{MC}}$ be an irreducible, continuous-time Markov chain with state space $\left\lbrace 0,1 \right\rbrace$.
Let $J^{\mathrm{rBM}}$ be a reflected Brownian motion with reflecting barriers $0$ and $1$. For simplicity, assume that $J^{\mathrm{rBM}}$ starts in $x_{0} \in \left( 0,1 \right)$, so
\begin{align*}
J^{\mathrm{rBM}} \left( t \right) = x_{0} + W \left( t \right) + L \left( t \right) - U \left ( t \right)
\end{align*}
for some standard Brownian motion $W$, lower-regulator process $L$ and upper-regulator process $U$ (see for instance \cite{DIKM2012}).

Consider the two modulated infinite-server queues $\left( J^{\mathrm{MC}} , \lambda , \kappa , \mu \right)$ and $\left( J^{\mathrm{rBM}} , \lambda , \kappa , \mu \right)$. Under the scaling $\lambda \mapsto n \lambda$, both $\frac{1}{n} M_{n}^{\mathrm{rBM}} \left( t \right)$ and $\frac{1}{n} M_{n}^{\mathrm{MC}} \left( t \right)$ satisfy an LDP with the same good rate function $I$, which is given by
\begin{align}
I \left( a \right) = \begin{cases} \infty & a \in \left( -\infty,0 \right);\\ 0 & a \in \left[ 0,t \right];\\ \ell \left( t ; a \right) & a \in \left[ t,\infty \right).\end{cases}
\end{align}

The rate function for the LDP corresponding to $\frac{1}{n} M_{n}^{\mathrm{MC}} \left( t \right)$ is derived in Example~\ref{ex:markovmod}. It is easy to see that the rate function has the form claimed above.

We will show that $\frac{1}{n} M_{n}^{\mathrm{rBM}} \left( t \right)$ satisfies an LDP with the same rate function. Fix $g \in \mathcal{S} \left( \left[ 0,\infty \right) ; \mathcal{E} \right)$ with minimal representation $\left\lbrace \left( t_{i} , \alpha_{i} \right) \right\rbrace_{i=0}^{k}$ and take any $\epsilon > 0$. Define $\mathcal{W} \left( g ; \epsilon \right)$ as the set of all $f \in D  \left( \left[ 0,\infty \right) ; \mathcal{E} \right)$ such that
\begin{align*}
| f \left( t \right) - \alpha_{i} | \leq \epsilon \quad \forall  t \in \left[ t_{i-1} + \tfrac{\epsilon}{2} \tfrac{1}{k} \Delta_{g} , t_{i} - \tfrac{\epsilon}{2} \tfrac{1}{k} \Delta_{g} \right) \quad \forall i \in \left\lbrace 1 , \ldots , k \right\rbrace.
\end{align*}
Then we get
\begin{align*}
\sup_{f \in \mathcal{W} \left( g ; \epsilon \right)} \phi_{t} \left( f \right) \leq \phi_{t} \left( g \right) + \epsilon t  + \epsilon
\intertext{and}
\inf_{f \in \mathcal{W} \left( g ; \epsilon \right)} \phi_{t} \left( f \right) \geq \phi_{t} \left( g \right) - \epsilon t  - \epsilon.
\end{align*}
Now observe that
\begin{align*}
\mathbb{P} \left( J^{\mathrm{rBM}} \in \mathcal{W} \left( g ; \epsilon \right) \right) \geq
\mathbb{P} \left( x_{0} + W \in \mathcal{W} \left( g ; \epsilon \right) \right) > 0,
\end{align*}
due to the definition of $J^{\mathrm{rBM}}$ and $W$ being a Brownian motion.

It follows that $\left\lbrace \phi_{t} \left( g \right) \, \middle\vert \, g \in \mathcal{S} \left( \left[ 0,\infty \right) ; \mathcal{E} \right) \right\rbrace \subset \mathcal{R}^{\mathrm{rBM}} \left( t \right)$, so $\mathcal{R}^{\mathrm{rBM}} \left( t \right) = \left[ 0,t \right]$ and the corresponding rate function is given by the function $I$ above.
\end{example}

In this section we considered examples in which the background process was not scaled. As shown, this implies some special properties, which we can use to explicitly compute rate functions. In the next section, we will scale the background process, too. Although explicit computations are not possible in general, there are still cases for which we may derive rate functions.

\section{Examples: scaled background processes}
In this section we will give two examples in which the background process is scaled. In the first example, we will consider the Markov-modulated infinite-server queue and derive an explicit rate function under a superlinear time-scaling. In the second example, we will consider a new model in which the background process is a Brownian motion. In this case, the rate function will be given as the solution of a variational problem.

\begin{example}
Let $J$ be an irreducible continuous-time Markov chain with finite state space $\left\lbrace 1 , \dotsc , d \right\rbrace$ and generator matrix $Q$. Denote the corresponding stationary distribution by $\pi = \left( \pi_{1} , \dotsc , \pi_{d} \right)$. 

Consider the modulated infinite-server queue $\left( J , \lambda , \kappa , \mu \right)$. Define $\mu_{\infty} = \sum_{j=1}^{d} \pi_{j} \mu_{j}$ and 
\begin{align*}
\varrho_{t} = \sum_{j = 1}^{d} \pi_{j} \lambda_{j} \int_{0}^{t} e^{- \kappa_{j} \mu_{\infty} \left( t-s \right)} \, \dd s 
= \sum_{j = 1}^{d} \pi_{j} \frac{\lambda_{j}}{\kappa_{j} \mu_{\infty}} \left( 1 - e^{- \kappa_{j} \mu_{\infty} t } \right).
\end{align*}
Scale $\lambda \mapsto n \lambda$ and $J \mapsto J_{n}$, where $J_{n} \left( t \right) = J \left( n^{1+\epsilon} t \right)$. It is easy to see that scaling $J \mapsto J_{n}$ is equivalent to scaling $Q \mapsto n^{1+\epsilon} Q$.

The sequence of random parameters $\left\lbrace \phi_{t} \left( J_{n} \right) \right\rbrace_{n \in \mathbb{N}}$ satisfies an LDP with rate function $\psi$, where
\begin{align*}
\psi \left( a \right) = \begin{cases} 0 & a = \varrho_{t}; \\ \infty & a \not= \varrho_{t}. \end{cases}
\end{align*}
Indeed, this follows from the fact that
\begin{align*}
\liminf_{n\to \infty} \frac{1}{n} \log \mathbb{P} \left( \phi_{t} \left( J_{n} \right) \in \openball{\rho_{t}}{\eta} \right) = 0
\end{align*}
and
\begin{align*}
\limsup_{n\to \infty} \frac{1}{n} \log \mathbb{P} \left( \phi_{t} \left( J_{n} \right) \not\in \openball{\rho_{t}}{\eta} \right) = - \infty
\end{align*}
for all $\eta > 0$. These equalities are an immediate result from the proof of \cite[Th.~3]{BDKM2014}.

Given this LDP for $\left\lbrace \phi_{t} \left( J_{n} \right) \right\rbrace_{n \in \mathbb{N}}$, Theorem~\ref{thm:LDP} implies that $\frac{1}{n} M_{n} \left( t \right)$ satisfies an LDP with rate function $I$, where
\begin{align*}
I \left( a \right) = \ell \left( \varrho_{t} ; a \right).
\end{align*}
Hence, under this superlinear time-scaling of the background Markov chain, the LDP for $\frac{1}{n} M_{n} \left( t \right)$ is governed by a Poisson rate function with parameter $\varrho_{t}$.
\end{example} 
\begin{example}
Consider a modulated infinite-server queue $\left( J , \lambda , \kappa , \mu \right)$, where the background process $J$ is a standard Brownian motion $W$ on $\left[ 0,\infty \right)$. By $\overline{W}$ we denote its restriction to the interval $\left[ 0,t \right]$. The sample paths of $\overline{W}$ are elements of $C_{0} \left[ 0,t \right]$, the space of continuous functions $f \colon \left[ 0,t \right] \to \mathbb{R}$ with $f \left( 0 \right) = 0$.

Equip $C_{0} \left[ 0,t \right]$ with the supremum metric. Of course, we may view the function $\phi_{t}$ as a map from $C_{0} \left[ 0,t \right]$ to $\left[ 0,\infty \right)$ and this map is continuous under the supremum metric.

Scale $\lambda \mapsto n \lambda$ and $J \mapsto J_{n}$, where $J_{n}$ is given by a linear time-scaling: $J_{n} \left( s \right) = W \left( s/n \right)$ for $s \geq 0$. Under this scaling, the arrivals are sped up linearly, whereas the time scale of the Brownian motion is slowed down linearly.

This scaling resembles the scaling featured in \cite{DM2014}. There, the authors considered a modulated infinite-server queue under a linear scaling of both the arrival rate and the time scale of an irreducible Markov chain. The rate function obtained in \cite{DM2014} is given as the solution of a variational problem. We will obtain a similar result in this example.

Since $W$ is a Brownian motion, we have
\begin{align*}
\phi_{t} \left( J_{n} \right) \overset{\mathrm{d}}{=} \phi_{t} \left( \frac{1}{\sqrt{n}} W \right) = \phi_{t} \left( \frac{1}{\sqrt{n}} \overline{W} \right).
\end{align*}
Schilder's Theorem (cf.\ \cite[Th.~5.2.3]{dz1998}) states that $\frac{1}{\sqrt{n}} \overline{W}$ satisfies an LDP in $C_{0} \left[ 0,t \right]$ with good rate function
\begin{align*}
\xi \left( f \right) = \begin{cases} \frac{1}{2} \int_{0}^{t} \vert \dot{f} \left( s \right) \vert^{2} \, \dd s & f \in H_{1} \left( \left[ 0,t \right] \right); \\ \infty & \text{else}.\end{cases}
\end{align*}
Here, $H_{1} \left( \left[ 0,t \right] \right)$ denotes the set of all absolutely continuous functions $f \in C_{0} \left[ 0,t \right]$ that have square integrable derivative $\dot{f}$.

The contraction principle (cf.\ \cite[Th.~4.2.1]{dz1998}) now implies that $\phi_{t} \left( J_{n} \right)$ satisfies an LDP with good rate function $\psi$, where $\psi$ is given by
\begin{align*}
\psi \left( a \right) = \inf \left\lbrace \xi \left( f \right) \middle\vert f \in H_{1} \left( \left[ 0,t \right] \right), \phi_{t} \left( f \right) = a \right\rbrace.
\end{align*}
It follows from Theorem~\ref{thm:LDP} that $\frac{1}{n} M_{n} \left( t \right)$ satisfies an LDP with rate function $I$, where $I$ is given by
\begin{align*}
I \left( a \right) = \inf_{\gamma \in \mathcal{R} \left( t \right)} \left[ \ell \left( \gamma ; a \right) + \psi \left( \gamma \right) \right].
\end{align*}
Now recall that $\left\lbrace \psi < \infty \right\rbrace \subset \mathcal{R} \left( t \right)$. Also observe that $\left\lbrace \xi < \infty \right\rbrace = H_{1} \left( \left[ 0,t \right] \right)$ and that $\left\lbrace \psi < \infty \right\rbrace = \left\lbrace \phi_{t} \left( f \right) \middle\vert f \in H_{1} \left( \left[ 0,t \right] \right) \right\rbrace$. Then we may rewrite $I$ as
\begin{align*}
I \left( a \right) 
&= \inf_{\gamma \in \left\lbrace \psi < \infty \right\rbrace} \left[ \ell \left( \gamma ; a \right) + \psi \left( \gamma \right) \right]\\
&= \inf_{f \in H_{1} \left( \left[ 0,t \right] \right)} \left[ \ell \left( \phi_{t} \left( f \right) ; a \right) + \psi \left( \phi_{t} \left( f \right) \right) \right].
\end{align*}
Hence, $I$ is given as the solution of a variational problem.
\end{example}

\section{Discussion and concluding remarks}
\label{sec:discussion}
In this paper, we studied an infinite-server queue in a random environment and proved a full LDP for the transient number of jobs in the system. The proof of this LDP has two essential ingredients, namely the result that the transient number of jobs in the system has a Poisson distribution with a random parameter and the assumption that the random parameter satisfies an LDP. Hence, the large deviations behavior of the random parameter seems to be the crucial factor that determines the large deviations behavior of the number of jobs in the system.

The rate function corresponding to the LDP for the number of jobs is rather abstract. Nevertheless, we showed in the examples how to compute the rate function in certain specific cases. In particular, we recovered earlier obtained results for Markov-modulated infinite-server queues and strengthened these to a full LDP. Additionally, we proved LDPs when the background process has an uncountable state space. In all examples, knowledge about the behavior of the background process could be exploited to describe the rate function.

There are several interesting topics for future research on the modulated infinite-server queue presented here. In this paper, we only looked at large deviations of the number of jobs at a fixed time $t \geq 0$. However, for certain applications it may be desirable to know the deviations over the whole time interval $\left[ 0,t \right]$. Therefore, it would be interesting to consider sample path large deviations.
Also moderate deviations could be worth investigating, so as to bridge the gap between the central limit theorems and the large deviations results for modulated infinite-server queues.

Furthermore, it could be very interesting to study other models. In particular, more general arrival processes or service time distributions could be considered. As an example, it seems that the setup of \cite{BM2013} could be generalized to include a background process with countable state space and general service time distributions. Regarding general service time distributions, it should be mentioned that there might be some measurability issues when the background process has an uncountable state space. Finally, it would be interesting to see whether the large deviations results for modulated infinite-server queues carry over to modulated Ornstein-Uhlenbeck processes. To the best of our knowledge, this has not been investigated so far.

\begin{acknowledgement}
This research has been partly funded by the Interuniversity Attraction Poles Programme initiated by the Belgian Science Policy Office.
\end{acknowledgement}

\appendix
\section{Transient number of jobs in the system}
\label{sec:transient}
In this section, we provide the precise mathematical description of the model and determine the distribution of the number of jobs in the system at time $t \geq 0$, which is denoted by $M \left( t \right)$. We mentioned in Section~\ref{sec:intro} that the steady-state distribution of the number of jobs in the system has already been determined for specific background processes in Model~I and Model~II. However, in this case we would like to determine the transient distribution given a general background process for the model described below, which generalizes Model~I and Model~II. Fortunately, the setup of our model is quite convenient and we may obtain the transient distribution without too much effort.

By $D \left( \left[ 0, \infty \right) ; \mathcal{E} \right)$ we denote the space of c\`{a}dl\`{a}g functions from $\left[ 0, \infty \right)$ to $\mathcal{E}$, where $\mathcal{E}$ is a metric space with metric $\rho$. We define, in the usual way, a metric $d^{\circ}$ on $D \left( \left[ 0, \infty \right) ; \mathcal{E} \right)$ that generates the Skorokhod $J_{1}$ topology. (For more details, see Section~\ref{sec:stepskor} and references there.)

Let $\left( \Omega , \mathcal{F} , \mathbb{P} \right)$ be a probability space on which we have defined an independent, standard Poisson processes $\overline{Y}$ and an independent, c\`{a}dl\`{a}g stochastic process $J$ with state space $\mathcal{E}$.
Assume that we have defined a collection of independent standard exponential random variables $\overline{Z}_{1}, \overline{Z}_{2}, \ldots$ on this probability space.

To modulate the infinite-server queue, we take continuous functions $\lambda \colon \mathcal{E} \to \left[ 0,\infty \right)$, $\mu \colon \mathcal{E} \to \left[ 0,\infty \right)$ and $\kappa \colon \mathcal{E} \to \left[ 0,\infty \right)$. More precisely, $\lambda \left( J \right)$ modulates the arrival rate, $\kappa \left( J \right)$ modulates the service requirement distribution and $\mu \left( J \right)$ modulates the server work rate.

We define the modulated Poisson process $Y$ via
\begin{align*}
Y \left( t \right) &= \overline{Y} \left( \int_{0}^{t} \lambda \left( J \left( s \right) \right) \, \dd s \right).
\end{align*}
The process $Y$ will be the arrival process. We denote the jump times of $Y$ by $\tau_{1}, \tau_{2}, \ldots$ and the jump times of $\overline{Y}$ by $\overline{\tau}_{1}, \overline{\tau}_{2}, \ldots$. For convenience, we set $\tau_{0} = \overline{\tau}_{0} = 0$. The jump times $\tau_{k}$ and $\overline{\tau}_{k}$ are related via $\tau_{k} = \Lambda^{-} \left( \overline{\tau}_{k} \right)$ and $\overline{\tau}_{k} = \Lambda \left( \tau_{k} \right)$, where $\Lambda \left( t \right) = \int_{0}^{t} \lambda \left( J \left( s \right) \right) \, \dd s$ and $\Lambda^{-} \left( r \right) = \inf \left\lbrace t \geq 0 \, \middle\vert \, \Lambda \left( t \right) \geq r \right\rbrace$.

Define the interarrival times $\sigma_{k} = \tau_{k} - \tau_{k-1}$ and $\overline{\sigma}_{k} = \overline{\tau}_{k} - \overline{\tau}_{k-1}$ for $k \in \mathbb{N}$. For later use, we note that $\overline{\sigma}_{1} , \overline{\sigma}_{2} , \dotsc$ is a sequence of i.i.d.\ random variables with a standard exponential distribution.

At time $t = 0$ there are no jobs in the system. At each jump time of $Y$ exactly one job arrives. Hence, the number of jobs that have entered the system during the time interval $\left[ 0,t \right]$ is given by the (a.s.\ finite) random variable $\sum_{k=1}^{\infty} \ind{\tau_{k} \leq t}$.

When job $k$ enters the system at time $\tau_{k}$, it draws an independent service requirement from an exponential distribution with parameter $\kappa \left( J \left( \tau_{k} \right) \right) \geq 0$, i.e., the service requirement of job $k$ is given by $Z_{k}$, where
\begin{align*}
Z_{k} = \begin{cases} \overline{Z}_{k} / \kappa \left( J \left( \tau_{k} \right) \right) & \text{ if } \kappa \left( J \left( \tau_{k} \right) \right) > 0; \\ \infty & \text{ if } \kappa \left( J \left( \tau_{k} \right) \right) = 0. \end{cases}
\end{align*}
Job $k$ leaves the system when its service requirement has been processed by the server, whose work rate is modulated by the background process $J$ and is equal to $\mu \left( J \left( s \right) \right)$ for $s \geq 0$.

Hence, job $k$ has both entered and left the system before time $t \geq 0$ if and only if $\tau_{k} \leq t$ and $Z_{k} \leq \int_{ \left[\tau_{k} , t \right)} \mu \left( J \left( r \right) \right) \, \dd r$. We get
\begin{align*}
M \left( t \right) = \sum_{k=1}^{\infty} \left( \ind{\tau_{k} \leq t} - \ind{\tau_{k} \leq t} \ind{Z_{k} \leq \int_{ \left[\tau_{k} , t \right)} \mu \left( J \left( r \right) \right) \, \dd r} \right).
\end{align*}
Note that $M \left( t \right)$ is a c\`{a}dl\`{a}g stochastic process.
Because each $Z_k$ is strictly positive with probability $1$, it follows that
\begin{align*}
M \left( t \right) &\overset{\mathrm{d}}{=} \sum_{k=1}^{\infty} \left( \ind{\tau_{k} \leq t} - \ind{Z_{k} < \int_{t \wedge \tau_{k}}^{t} \mu \left( J \left( r \right) \right) \, \dd r} \right)\\
&= \sum_{k=1}^{\infty} \left( \ind{\tau_{k} \leq t} - \ind{\overline{Z}_{k} < \kappa \left( J \left( \tau_{k} \right) \right) \int_{t \wedge \tau_{k}}^{t} \mu \left( J \left( r \right) \right) \, \dd r} \right).
\end{align*}
If $J$ is deterministic, then it is relatively easy to determine the distribution of $M \left( t \right)$. For instance, one may compute the characteristic function of $M \left( t \right)$ via the following steps.

Suppose that $J \left( \omega , t \right) = f \left( t \right)$ for all $\omega \in \Omega$ and $t \geq 0$ for some function $f \in D \left( \left[ 0, \infty \right) ; \mathcal{E} \right)$. For fixed $\kappa$, $\mu$, $f$ and $t$ we define the functions $g$ and $h$ via
\begin{align*}
g \left( s \right) = \kappa \left( f \left( s \right) \right) \int_{t \wedge s}^{t} \mu \left( f \left( r \right) \right) \, \dd r,
\qquad
h \left( s \right) = 1 + \left[ \exp \left( \mathrm{i} \theta \right) - 1 \right] \exp \left( - g \left( s \right) \right).
\end{align*}
Now we may write the characteristic function of $M \left( t \right)$ as
\begin{align*}
&\mathbb{E} \exp \left( \mathrm{i} \theta M \left( t \right) \right) = \mathbb{E} \exp \left( \mathrm{i} \theta \sum_{k=1}^{\infty} \left( \ind{\tau_{k} \leq t} - \ind{\overline{Z}_{k} < \kappa \left( f \left( \tau_{k} \right) \right) \int_{t \wedge \tau_{k}}^{t} \mu \left( f \left( r \right) \right) \, \dd r} \right) \right) = \\
&= \mathbb{E} \ind{\tau_{1} > t} + \sum_{n=1}^{\infty} \mathbb{E} \ind{\tau_{n} \leq t ; \tau_{n+1} > t} \exp \left( \mathrm{i} \theta \left( n - \sum_{k=1}^{n} \ind{\overline{Z}_{k} < g \left( \tau_{k} \right)} \right) \right).
\end{align*}
Clearly, $\mathbb{E} \ind{\tau_{1} > t} = e^{- \int_{0}^{t} \lambda \left( f \left( s \right) \right) \, \dd s} = e^{-\Lambda \left( t \right)}$. We are left with computing the infinite sum above. Fix $n \in \mathbb{N}$ and note that
\begin{align*}
&\mathbb{E} \ind{\tau_{n} \leq t ; \tau_{n+1} > t} \exp \left( \mathrm{i} \theta \left( n - \sum_{k=1}^{n} \ind{\overline{Z}_{k} < g \left( \tau_{k} \right)} \right) \right) =\\
&= \mathbb{E} \left( \ind{\tau_{n} \leq t ; \tau_{n+1} > t} \condexpectation{\exp \left( \mathrm{i} \theta \left( n - \sum_{k=1}^{n} \ind{\overline{Z}_{k} < g \left( \tau_{k} \right)} \right) \right)}{\tau_{1} , \tau_{2} , \ldots} \right)\\
&= \mathbb{E} \ind{\tau_{n} \leq t ; \tau_{n+1} > t} \prod_{k=1}^{n} \left( 1 + \left[ \exp \left( \mathrm{i} \theta \right) - 1 \right] e^{- g \left( \tau_{k} \right)} \right),
\end{align*}
because $Y$ and $\overline{Z}_{1}, \overline{Z}_{2}, \ldots$ are independent.

Next, observe that
\begin{align*}
\mathbb{E} \ind{\tau_{n} \leq t ; \tau_{n+1} > t} \prod_{k=1}^{n} \left( 1 + \left[ \exp \left( \mathrm{i} \theta \right) - 1 \right] e^{- g \left( \tau_{k} \right)} \right) = 
\mathbb{E} \ind{\tau_{n} \leq t ; \tau_{n+1} > t} \prod_{k=1}^{n} h \left( \tau_{k} \right) \\
\end{align*}
and
\begin{align*}
\mathbb{E} \ind{\tau_{n} \leq t ; \tau_{n+1} > t} \prod_{k=1}^{n} h \left( \tau_{k} \right)
&= \mathbb{E} \left( \ind{\tau_{n} \leq t} \left( \prod_{k=1}^{n} h \left( \tau_{k} \right) \right) \condexpectation{\ind{\sigma_{n+1} > t - \tau_{n}}}{\tau_{1} , \ldots , \tau_{n}} \right) \\
&= \mathbb{E} \left( \ind{\tau_{n} \leq t} \left( \prod_{k=1}^{n} h \left( \tau_{k} \right) \right) e^{- \left( \Lambda \left( t \right) - \Lambda \left( \tau_{n} \right) \right)} \right).
\end{align*}
For convenience we write $x^{+}_{k} = x_{1} + \dotsb + x_{k}$. We have
\begin{align*}
&\mathbb{E} \left( \ind{\tau_{n} \leq t} \left( \prod_{k=1}^{n}  h \left( \tau_{k} \right) \right) e^{\Lambda \left( \tau_{n} \right)} \right) =\\
&= \mathbb{E} \ind{\overline{\tau}_{n} \leq \Lambda \left( t \right)} \left( \prod_{k=1}^{n} h \left( \Lambda^{-} \left( \overline{\tau}_{k} \right) \right) \right) e^{\overline{\tau}_{n}}\\
&= \int_{x_{1} = 0}^{\Lambda \left( t \right)} \int_{x_{2} = 0}^{\Lambda \left( t \right) - x^{+}_{1}} \dotsi \int_{x_{n} = 0}^{\Lambda \left( t \right) - x^{+}_{n-1}} \prod_{k=1}^{n} h \left( \Lambda^{-} \left( x^{+}_{k} \right) \right) \, \dd x_{n} \dotsc \dd x_{1}\\
&= \int_{y_{1} = 0}^{\Lambda \left( t \right)} \int_{y_{2} = y_{1}}^{\Lambda \left( t \right)} \dotsi \int_{y_{n} = y_{n-1}}^{\Lambda \left( t \right)} \prod_{k=1}^{n} h \left( \Lambda^{-} \left( y_{k} \right) \right) \, \dd y_{n} \dotsc \dd y_{1}\\
&= \int_{z_{1} = 0}^{t} \int_{z_{2} = z_{1}}^{t} \dotsi \int_{z_{n} = z_{n-1}}^{t} \prod_{k=1}^{n} \left[ h \left( z_{k} \right) \lambda \left( f \left( z_{k} \right) \right) \right] \, \dd z_{n} \dotsc \dd z_{1}.
\end{align*}
Now note that for an integrable function $g$ we have
\begin{align*}
\left[ \int_{0}^{t} g \left( s \right) \, \dd s \right]^{n} = n! \int_{z_{1} = 0}^{t} \int_{z_{2} = z_{1}}^{t} \dotsi \int_{z_{n} = z_{n-1}}^{t} \prod_{k=1}^{n} g \left( z_{k} \right) \, \dd z_{n} \dotsc \dd z_{1}.
\end{align*}
As a result, it holds that
\begin{align*}
&\int_{z_{1} = 0}^{t} \int_{z_{2} = z_{1}}^{t} \dotsi \int_{z_{n} = z_{n-1}}^{t} \prod_{k=1}^{n} \left[ h \left( z_{k} \right) \lambda \left( f \left( z_{k} \right) \right) \right] \, \dd z_{n} \dotsc \dd z_{1} =\\
&= \frac{1}{n!} \left[ \int_{0}^{t} h \left( s \right) \lambda \left( f \left( s \right) \right) \, \dd s \right]^{n}\\
&= \sum_{k=0}^{n} \frac{1}{k!} \Lambda \left( t \right)^{k} \frac{1}{\left( n-k \right)!} \left( \left[ \exp \left( \mathrm{i} \theta \right) - 1 \right] \int_{0}^{t} \lambda \left( f \left( s \right) \right) e^{- g \left( s \right)} \, \dd s \right)^{n-k}.
\end{align*}
Now we may write
\begin{align*}
&\mathbb{E} \exp \left( \mathrm{i} \theta M \left( t \right) \right) =\\
&= \mathbb{E} \ind{\tau_{1} > t} + \sum_{n=1}^{\infty} \mathbb{E} \ind{\tau_{n} \leq t ; \tau_{n+1} > t} \exp \left( \mathrm{i} \theta \left( n - \sum_{k=1}^{n} \ind{\overline{Z}_{k} < g \left( \tau_{k} \right)} \right) \right) \\
&= e^{- \Lambda \left( t \right)} + \sum_{n=1}^{\infty} e^{- \Lambda \left( t \right)} \sum_{k=0}^{n} \frac{1}{k!} \Lambda \left( t \right)^{k} \frac{1}{\left( n-k \right)!} \left( \left[ e^{\mathrm{i} \theta} - 1 \right] \int_{0}^{t} \lambda \left( f \left( s \right) \right) e^{- g \left( s \right)} \, \dd s \right)^{n-k}\\
&= e^{- \Lambda \left( t \right)} \sum_{n=0}^{\infty} \sum_{k=0}^{n} \frac{1}{k!} \Lambda \left( t \right)^{k} \frac{1}{\left( n-k \right)!} \left( \left[ e^{\mathrm{i} \theta} - 1 \right] \int_{0}^{t} \lambda \left( f \left( s \right) \right) e^{- g \left( s \right)} \, \dd s \right)^{n-k}\\
&= e^{- \Lambda \left( t \right)} \sum_{k=0}^{\infty} \sum_{n=0}^{\infty} \frac{1}{k!} \Lambda \left( t \right)^{k} \frac{1}{n!} \left( \left[ e^{\mathrm{i} \theta} - 1 \right] \int_{0}^{t} \lambda \left( f \left( s \right) \right) e^{- g \left( s \right)} \, \dd s \right)^{n}\\
&= \exp \left( \left[ \exp \left( \mathrm{i} \theta \right) - 1 \right] \int_{0}^{t} \lambda \left( f \left( s \right) \right) e^{- \kappa \left( f \left( s \right) \right) \int_{s}^{t} \mu \left( f \left( r \right) \right) \, \dd r} \, \dd s \right).
\end{align*}
Hence, in this case $M \left( t \right)$ has a Poisson distribution with parameter $\phi_{t} \left( f \right)$, where $\phi_{t} \left( f \right) = \int_{0}^{t} \lambda \left( f \left( s \right) \right) e^{- \kappa \left( f \left( s \right) \right) \int_{s}^{t} \mu \left( f \left( r \right) \right) \, \dd r} \, \dd s$. 

Now suppose that $J$ is not deterministic. Then $J$ is a random element of $D \left( \left[ 0, \infty \right) ; \mathcal{E} \right)$. In this case, we may use the independence of $J$ and standard arguments to obtain that
\begin{align*}
\mathbb{E} \exp \left( \mathrm{i} \theta M \left( t \right) \right)
&= \mathbb{E} \mathbb{E} \left[ \exp \left( \mathrm{i} \theta M \left( t \right) \right) \middle\vert \mathcal{F}^{J}_{\infty} \right]\\
&= \mathbb{E} \exp \left( \left[ \exp \left( \mathrm{i} \theta \right) - 1 \right] \int_{0}^{t} \lambda \left( J \left( s \right) \right) e^{- \kappa \left( J \left( s \right) \right) \int_{s}^{t} \mu \left( J \left( r \right) \right) \, \dd r} \, \dd s \right).
\end{align*}
We summarize our findings in the following lemma.
\begin{lemma}
Under the stated conditions, $M \left( t \right)$ has a Poisson distribution with random parameter
\begin{align*}
\phi_{t} \left( J \right) &= \int_{0}^{t} \lambda \left( J \left( s \right) \right) e^{- \kappa \left( J \left( s \right) \right) \int_{s}^{t} \mu \left( J \left( r \right) \right) \, \dd r} \, \dd s.
\end{align*}
\end{lemma}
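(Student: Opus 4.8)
The plan is to settle the case of a deterministic background path first and then pass to the general case by conditioning. Fix $t \geq 0$ and suppose initially that $J \equiv f$ for some fixed $f \in D\left(\left[0,\infty\right);\mathcal{E}\right)$. Since $\lambda$ is continuous and $f$ is c\`{a}dl\`{a}g, the map $s \mapsto \lambda\left(f\left(s\right)\right)$ is Borel measurable and locally bounded, so the time-changed process $Y\left(s\right) = \overline{Y}\left(\Lambda\left(s\right)\right)$ with $\Lambda\left(s\right)=\int_{0}^{s}\lambda\left(f\left(r\right)\right)\,\dd r$ is an inhomogeneous Poisson process on $\left[0,t\right]$ with intensity $\lambda\left(f\left(\cdot\right)\right)$ and no fixed atoms. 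A job arriving at epoch $\tau_{k}$ is still present at time $t$ exactly when $\tau_{k}\leq t$ and $\overline{Z}_{k} \geq g\left(\tau_{k}\right)$, where $g\left(s\right)=\kappa\left(f\left(s\right)\right)\int_{s}^{t}\mu\left(f\left(r\right)\right)\,\dd r$; since the $\overline{Z}_{k}$ are i.i.d.\ standard exponential and independent of $\overline{Y}$, this happens, conditionally on the arrival epochs, independently over $k$ with probability $e^{-g\left(\tau_{k}\right)}$. (The degenerate values $\kappa\left(f\left(\tau_{k}\right)\right)=0$ and $\mu\left(f\left(\cdot\right)\right)\equiv 0$ give $g\left(\tau_{k}\right)=0$, hence retention probability $1$, consistent with $Z_{k}=\infty$.)

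I would then invoke the marking (thinning) theorem for Poisson processes: marking each point of $Y$ independently as ``still present at $t$'' with the location-dependent probability $e^{-g\left(s\right)}$ yields a Poisson process whose atoms are precisely the jobs in the system at time $t$, with mean
\begin{align*}
\int_{0}^{t}\lambda\left(f\left(s\right)\right)e^{-g\left(s\right)}\,\dd s = \int_{0}^{t}\lambda\left(f\left(s\right)\right)e^{-\kappa\left(f\left(s\right)\right)\int_{s}^{t}\mu\left(f\left(r\right)\right)\,\dd r}\,\dd s = \phi_{t}\left(f\right).
\end{align*}
Thus $M\left(t\right)$ is $\mathrm{Poisson}\left(\phi_{t}\left(f\right)\right)$ when $J$ is deterministic. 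An equivalent, more hands-on route --- the one carried out in the display computations above --- is to evaluate $\expectation{e^{\mathrm{i}\theta M\left(t\right)}}$ directly by conditioning on $\tau_{1},\tau_{2},\ldots$, summing over the number $n$ of arrivals in $\left[0,t\right]$, computing the iterated integrals via the order-statistics representation of Poisson arrival times, and recognizing the resulting series as $\exp\left(\left[e^{\mathrm{i}\theta}-1\right]\phi_{t}\left(f\right)\right)$.

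For general $J$, recall that $J$ is independent of $\overline{Y}$ and of $\overline{Z}_{1},\overline{Z}_{2},\ldots$, and that $\phi_{t}$ is a measurable functional on $D\left(\left[0,\infty\right);\mathcal{E}\right)$ (part of the regularity of $\phi_{t}$ discussed in Section~\ref{sec:stepskor}). Conditioning on $\mathcal{F}^{J}_{\infty}$ therefore reduces to the deterministic situation with $f=J\left(\omega,\cdot\right)$, so $\condexpectation{e^{\mathrm{i}\theta M\left(t\right)}}{\mathcal{F}^{J}_{\infty}} = \exp\left(\left[e^{\mathrm{i}\theta}-1\right]\phi_{t}\left(J\right)\right)$ almost surely; taking expectations identifies the characteristic function of $M\left(t\right)$ with that of a mixed Poisson law, i.e.\ $M\left(t\right)$ is Poisson with the random parameter $\phi_{t}\left(J\right)$.

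The step needing the most care is the deterministic one: one must make the thinning rigorous for an inhomogeneous Poisson process whose intensity is only c\`{a}dl\`{a}g-driven and locally bounded (not continuous), and handle the degenerate cases $\kappa=0$, $\mu=0$ uniformly. The direct characteristic-function computation sidesteps the marking theorem entirely and uses only Fubini, the multinomial expansion and the order-statistics structure of Poisson arrivals, so I would retain it as the fallback; once measurability of $\phi_{t}$ on Skorokhod space is available, the conditioning step is routine.
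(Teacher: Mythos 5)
Your proposal is correct, and your primary argument is genuinely different from the one in the paper. The paper establishes the deterministic case by a bare-hands computation of the characteristic function: it conditions on the arrival epochs, sums over the number of arrivals in $\left[ 0,t \right]$, reduces the resulting iterated integrals to powers of $\int_{0}^{t} h \left( s \right) \lambda \left( f \left( s \right) \right) \, \dd s$ via the order-statistics identity, and resums the double series into $\exp \left( \left[ e^{\mathrm{i} \theta} - 1 \right] \phi_{t} \left( f \right) \right)$. You instead observe that $Y$ is an inhomogeneous Poisson process with intensity $\lambda \left( f \left( \cdot \right) \right)$ and that, conditionally on the arrival epochs, job $k$ survives to time $t$ independently with the location-dependent probability $e^{-g \left( \tau_{k} \right)}$, so the position-dependent thinning (marking) theorem immediately gives that the surviving jobs form a Poisson process with mean $\int_{0}^{t} \lambda \left( f \left( s \right) \right) e^{-g \left( s \right)} \, \dd s = \phi_{t} \left( f \right)$; your treatment of the degenerate case $\kappa \left( f \left( \tau_{k} \right) \right) = 0$ is consistent with the paper's convention $Z_{k} = \infty$. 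This route is shorter and conceptually transparent, at the price of importing the marking theorem for inhomogeneous Poisson processes (whose hypotheses you correctly verify: the retention probability depends measurably on location only, and the marks are driven by the i.i.d.\ $\overline{Z}_{k}$, independent of $\overline{Y}$); the paper's computation buys self-containedness, using nothing beyond Fubini and elementary series manipulations. The passage to random $J$ by conditioning on $\mathcal{F}^{J}_{\infty}$ and using the independence of $J$ from $\overline{Y}$ and the $\overline{Z}_{k}$ is identical in both arguments.
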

Consequently, if we scale $\lambda \left( x \right) \mapsto n \lambda \left( x \right)$ and $J \mapsto J_{n}$, then the number of jobs in the system $M_{n} \left( t \right)$ has a Poisson distribution with random parameter $n \phi_{t} \left( J_{n} \right)$. This observation is crucial for the proof of the LDP for $\frac{1}{n} M_{n} \left( t \right)$.

\section{Continuity in Skorokhod space}
\label{sec:stepskor}
Let $\mathcal{E}$ be a metric space with metric $\rho$. Let $D \left( \left[ 0,\infty \right) ; \mathcal{E} \right)$ denote the space of c\`{a}dl\`{a}g functions $f \colon \left[ 0,\infty \right) \to \mathcal{E}$, i.e., $\lim_{s \downarrow t} f \left( s \right) = f \left( t \right)$ and $\lim_{s \uparrow t} f \left( s \right)$ exists in $\mathcal{E}$ for every $t \geq 0$, where $\lim_{s \uparrow 0} f \left( s \right) := f \left( 0 \right)$ by convention.

Define a metric $d^{\circ}$ on $D \left( \left[ 0,\infty \right) ; \mathcal{E} \right)$ via
\begin{align*}
d^{\circ} \left( f,g \right) = \inf_{\lambda \in \Lambda} \left[ \gamma \left( \lambda \right) \vee \int_{0}^{\infty} e^{-u} d \left( f , g , \lambda , u \right) \, \dd u \right].
\end{align*}
Here, $\Lambda$ denotes the space of increasing homeomorphisms of $\left[ 0,\infty \right)$,
\begin{align*}
\gamma \left( \lambda \right) = \sup_{t > s \geq 0} \left\vert \log \left( \lambda \left( t \right) - \lambda \left( s \right) \right) - \log \left( t - s \right) \right\vert
\end{align*}
and
\begin{align*}
d \left( f , g , \lambda , u \right) = \sup_{t \in \left[ 0,\infty \right)} \left[ 1 \, \wedge \, \rho \left( f \left( t \wedge u \right) , g \left( \lambda \left( t \right) \wedge u \right) \right) \right].
\end{align*}
The metric $d^{\circ}$ induces the Skorokhod $J_{1}$ topology. For more details, see \cite{ek1986} or \cite{whitt2002}.

\begin{definition}
A function $f_{\mathrm{c}} \in D \left( \left[ 0,\infty \right) ; \mathcal{E} \right)$ is called a \emph{piecewise constant function} or a \emph{step function} if there exist $n \in \mathbb{N}$, finitely many time points $0 = t_{0} < t_{1} < \ldots < t_{n} < \infty$ and $\alpha_{0} , \ldots , \alpha_{n} \in \mathcal{E}$ such that $f_{\mathrm{c}} \left( t \right) = \alpha_{i}$ for $t \in \left[ t_{i} , t_{i+1} \right)$ and $i = 0 , \ldots , n-1$ and $f_{\mathrm{c}} \left( t \right) = \alpha_{n}$ for $t \in \left[ t_{n} , \infty \right)$.

The set of step functions in $D \left( \left[ 0,\infty \right) ; \mathcal{E} \right)$ is denoted by $\mathcal{S} \left( \left[ 0,\infty \right) ; \mathcal{E} \right)$.
\end{definition}

\begin{proposition}
\label{pr:stepfunceps}
Let $f \in D \left( \left[ 0,\infty \right) ; \mathcal{E} \right)$. For all $T > 0$ and $\epsilon > 0$ there exists a step function $f_{\mathrm{c}} \in \mathcal{S} \left( \left[ 0,\infty \right) ; \mathcal{E} \right)$ such that
\begin{align*}
\sup_{t \in \left[ 0,T \right]} \rho \left( f \left( t \right) , f_{\mathrm{c}} \left( t \right) \right) < \epsilon.
\end{align*}
\end{proposition}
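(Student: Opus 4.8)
The plan is to reduce the statement to a purely combinatorial fact about c\`adl\`ag functions: for every $\epsilon>0$ and $T>0$ there is a finite partition $0=t_{0}<t_{1}<\dots<t_{n}=T$ such that each interval $[t_{i},t_{i+1})$ has oscillation $w(f;[t_{i},t_{i+1})):=\sup_{s,s'\in[t_{i},t_{i+1})}\rho\left(f\left(s\right),f\left(s'\right)\right)\le\epsilon/2$. Granting this, I would define $f_{\mathrm{c}}(t)=f(t_{i})$ for $t\in[t_{i},t_{i+1})$, $i=0,\dots,n-1$, and $f_{\mathrm{c}}(t)=f(t_{n})=f(T)$ for $t\ge t_{n}$; this is a step function in $\mathcal{S}\left(\left[0,\infty\right);\mathcal{E}\right)$ in the sense of the definition above. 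For $t\in[0,T)$, picking the index $i$ with $t\in[t_{i},t_{i+1})$, we have $\rho\left(f\left(t\right),f_{\mathrm{c}}\left(t\right)\right)=\rho\left(f\left(t\right),f\left(t_{i}\right)\right)\le w(f;[t_{i},t_{i+1}))\le\epsilon/2$, while $\rho\left(f\left(T\right),f_{\mathrm{c}}\left(T\right)\right)=0$; hence $\sup_{t\in[0,T]}\rho\left(f\left(t\right),f_{\mathrm{c}}\left(t\right)\right)\le\epsilon/2<\epsilon$, which is the claim.

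To construct the partition I would use a greedy argument. For $a\in[0,T)$ put $\beta(a)=\sup\{b\in(a,T]:w(f;[a,b))\le\epsilon/2\}$. Right-continuity of $f$ at $a$ yields $\delta>0$ with $\rho\left(f\left(s\right),f\left(a\right)\right)<\epsilon/4$ for all $s\in[a,a+\delta)$, so $w(f;[a,a+\delta))\le\epsilon/2$ and therefore $\beta(a)>a$. Moreover, if $b<\beta(a)$ then there is a larger $b'$ in the defining set, so $w(f;[a,b))\le w(f;[a,b'))\le\epsilon/2$; letting $b\uparrow\beta(a)$ gives $w(f;[a,\beta(a)))\le\epsilon/2$ as well. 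Now set $t_{0}=0$ and, as long as $t_{i}<T$, $t_{i+1}=\beta(t_{i})$. This produces a strictly increasing sequence in $(0,T]$ along which every interval $[t_{i},t_{i+1})$ has oscillation at most $\epsilon/2$.

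The crux of the argument — and the only place the existence of left limits is used — is that this construction terminates, i.e.\ $t_{n}=T$ after finitely many steps. If it did not, the increasing sequence would converge to some $\tau\le T$ with $t_{i}<\tau$ for all $i$. Since $f$ has a left limit at $\tau$, there is $\delta>0$ with $\rho\left(f\left(s\right),f\left(\tau-\right)\right)<\epsilon/8$ for $s\in(\tau-\delta,\tau)$, hence $w(f;(\tau-\delta,\tau))\le\epsilon/4$. Taking $i$ large enough that $t_{i}\in(\tau-\delta,\tau)$, we get $w(f;[t_{i},\tau))\le\epsilon/4\le\epsilon/2$, so $\beta(t_{i})\ge\tau$, contradicting $\beta(t_{i})=t_{i+1}<\tau$. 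Thus the sequence reaches $T$ in finitely many steps, which completes the construction (the singleton $\{T\}$ needs no separate oscillation bound because $f_{\mathrm{c}}(T)=f(T)$ by definition). I expect the termination step to be the only genuine obstacle; the remaining manipulations are bookkeeping, and the statement is in any case the standard fact that c\`adl\`ag functions are uniform limits of step functions on compacts (cf.\ \cite{ek1986}, \cite{whitt2002}).
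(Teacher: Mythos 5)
Your proof is correct, and it is essentially the argument the paper is pointing to: the paper simply defers to \cite[Th.~12.2.2]{whitt2002}, whose standard proof is exactly your greedy construction of a finite partition with small oscillation on each subinterval, using right-continuity to advance and the existence of left limits to force termination. Nothing is missing; the only cosmetic point is that in the existence step one should take $b=\min\left\lbrace a+\delta,T\right\rbrace$ so that $b$ lies in $\left( a,T \right]$, which does not affect the argument.
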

\begin{proof}
This is derived in the same way as \cite[Th.~12.2.2]{whitt2002}.
\end{proof}
\begin{corollary}
\label{co:stepdense}
The set $\mathcal{S} \left( \left[ 0,\infty \right) ; \mathcal{E} \right)$ is dense in $D \left( \left[ 0,\infty \right) ; \mathcal{E} \right)$.
\end{corollary}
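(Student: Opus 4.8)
The plan is to deduce density in the Skorokhod $J_{1}$ topology from the uniform-on-compacts approximation furnished by Proposition~\ref{pr:stepfunceps}. The key observation is that the metric $d^{\circ}$ can be bounded from above by restricting the infimum to the identity time change: since $\gamma \left( \mathrm{id} \right) = 0$, for any $f,g \in D \left( \left[ 0,\infty \right) ; \mathcal{E} \right)$ one has
\[
d^{\circ} \left( f,g \right) \leq \int_{0}^{\infty} e^{-u} \, d \left( f,g,\mathrm{id},u \right) \, \dd u = \int_{0}^{\infty} e^{-u} \left[ 1 \wedge \sup_{s \in \left[ 0,u \right]} \rho \left( f \left( s \right) , g \left( s \right) \right) \right] \dd u ,
\]
where the equality uses that, as $t$ ranges over $\left[ 0,\infty \right)$, the point $t \wedge u$ ranges over $\left[ 0,u \right]$. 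Thus it suffices to control $\sup_{s \in \left[ 0,u \right]} \rho \left( f \left( s \right) , g \left( s \right) \right)$ for $u$ in a large compact interval, and to let the truncation $1 \wedge \left( \cdot \right)$ together with the exponential weight $e^{-u}$ absorb the tail.

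Concretely, I would fix $f \in D \left( \left[ 0,\infty \right) ; \mathcal{E} \right)$ and $\delta > 0$, and first choose $T > 0$ with $e^{-T} < \delta / 2$. Applying Proposition~\ref{pr:stepfunceps} with this $T$ and with $\epsilon = \delta / 2$ produces a step function $f_{\mathrm{c}} \in \mathcal{S} \left( \left[ 0,\infty \right) ; \mathcal{E} \right)$ with $\sup_{t \in \left[ 0,T \right]} \rho \left( f \left( t \right) , f_{\mathrm{c}} \left( t \right) \right) < \delta / 2$. Inserting $f$ and $f_{\mathrm{c}}$ into the displayed bound and splitting the integral at $T$,
\[
d^{\circ} \left( f , f_{\mathrm{c}} \right) \leq \int_{0}^{T} e^{-u} \cdot \frac{\delta}{2} \, \dd u + \int_{T}^{\infty} e^{-u} \cdot 1 \, \dd u < \frac{\delta}{2} + e^{-T} < \delta ,
\]
where on $\left[ 0,T \right]$ one uses $\sup_{s \in \left[ 0,u \right]} \rho \left( f \left( s \right) , f_{\mathrm{c}} \left( s \right) \right) \leq \sup_{s \in \left[ 0,T \right]} \rho \left( f \left( s \right) , f_{\mathrm{c}} \left( s \right) \right) < \delta / 2$ and on the tail one uses the cap by $1$. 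Since $\delta > 0$ was arbitrary, every $f \in D \left( \left[ 0,\infty \right) ; \mathcal{E} \right)$ is a $d^{\circ}$-limit of step functions, which is the assertion.

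I do not expect a serious obstacle here: the only subtlety is that uniform approximation merely on a compact interval $\left[ 0,T \right]$ does not a priori yield closeness in $d^{\circ}$, so one has to exploit the specific shape of $d^{\circ}$ — namely that the behaviour of $f_{\mathrm{c}}$ beyond time $u$ is invisible to the integrand $d \left( f , f_{\mathrm{c}} , \lambda , u \right)$ because of the $t \wedge u$ cut-off, and that large values of $u$ contribute little due to the factor $e^{-u}$ and the truncation by $1$. Taking $\lambda = \mathrm{id}$ sidesteps any analysis of nontrivial time changes, so that for this particular approximating sequence the Skorokhod topology effectively reduces to uniform-on-compacts convergence; the remainder is the elementary integral estimate displayed above.
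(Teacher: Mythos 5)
Your argument is correct and is exactly the route the paper intends: the corollary is stated as an immediate consequence of Proposition~\ref{pr:stepfunceps}, and the missing step — that uniform closeness on $\left[ 0,T \right]$ controls $d^{\circ}$ via the identity time change, the $t \wedge u$ cut-off, and the weight $e^{-u}$ — is precisely what you supply. The integral estimate and the interchange of $1 \wedge \left( \cdot \right)$ with the supremum are both fine, so there is nothing to add.
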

Consequently, every continuous function on $D \left( \left[ 0,\infty \right) ; \mathcal{E} \right)$ is completely determined by its behavior on the set of step functions.

Let $\lambda \colon \mathcal{E} \to \left[ 0,\infty \right)$, $\kappa \colon \mathcal{E} \to \left[ 0,\infty \right)$ and $\mu \colon \mathcal{E} \to \left[ 0,\infty \right)$ be continuous. For $t \geq 0$, we would like to show that the function $\phi_{t} \colon D \left( \left[ 0,\infty \right) ; \mathcal{E} \right) \to \left[ 0,\infty \right)$ defined via
\begin{align}
\label{eq:phidef}
\phi_{t} \left( f \right) = \int_{0}^{t} \lambda \left( f \left( s \right) \right) e^{- \kappa \left( f \left( s \right) \right) \int_{s}^{t} \mu \left( f \left( r \right) \right) \, \dd r} \, \dd s
\end{align}
is a continuous function.

First, we observe that the map $c_{\lambda} \colon D \left( \left[ 0,\infty \right) ; \mathcal{E} \right) \to D \left( \left[ 0,\infty \right) ; \mathbb{R} \right)$ defined via $c_{\lambda} \left( f \right) \left( t \right) = \lambda \left( f \left( t \right) \right)$ is continuous, because $\lambda$ is continuous. Similarly, the functions $c_{\kappa}$ and $c_{\mu}$ are continuous.

Next, let $f,g \in D \left( \left[ 0,\infty \right) ; \mathbb{R} \right)$. Then pointwise multiplication of $f$ and $g$, defined via $\left( fg \right) \left( t \right) = f \left( t \right) g \left( t \right)$. This is a measurable map which is continuous at $\left( f,g \right)$ if $f$ or $g$ is continuous (cf.\ \cite[Th.~4.2]{whitt1980}).

Finally, let $f \in D \left( \left[ 0,\infty \right) ; \mathbb{R} \right)$. Then the map $\psi \colon D \left( \left[ 0,\infty \right) ; \mathbb{R} \right) \to D \left( \left[ 0,\infty \right) ; \mathbb{R} \right)$ defined via $\psi \left( t \right) = \int_{0}^{t} f \left( s \right) \, \dd s$ is continuous. This follows almost immediately from the definition of $\psi$ and the characterization in \cite[Pr.~3.5.3]{ek1986}.

Now note that the sequence of functions $\left\lbrace \lambda \left( f_{n} \right) \right\rbrace_{n \in \mathbb{N}}$ is bounded in the sup norm over $\left[ 0,t \right]$ if $f_{n} \to f$ in $D \left( \left[ 0,\infty \right) ; \mathcal{E} \right)$. Hence, it suffices to show that
\begin{align*}
\int_{0}^{t} e^{- \kappa \left( f_{n} \left( s \right) \right) \int_{s}^{t} \mu \left( f_{n} \left( r \right) \right) \, \dd r} \, \dd s \to \int_{0}^{t} e^{- \kappa \left( f \left( s \right) \right) \int_{s}^{t} \mu \left( f \left( r \right) \right) \, \dd r} \, \dd s
\end{align*}
as $f_{n} \to f$ in $D \left( \left[ 0,\infty \right) ; \mathcal{E} \right)$. But this follows from repeated applications of the first three observations.

Hence, the map $\phi_{t}$ must be continuous. Note that continuity of $\lambda$, $\kappa$ and $\mu$ is crucial to obtain this result. We summarize these findings in the following lemma.
\begin{lemma}
\label{lem:phictu}
Let $\lambda \colon \mathcal{E} \to \left[ 0,\infty \right)$, $\kappa \colon \mathcal{E} \to \left[ 0,\infty \right)$ and $\mu \colon \mathcal{E} \to \left[ 0,\infty \right)$ be continuous. Then the function $\phi_{t} \colon D \left( \left[ 0,\infty \right) ; \mathcal{E} \right) \to \left[ 0,\infty \right)$ as defined in equation \eqref{eq:phidef} is continuous.
\end{lemma}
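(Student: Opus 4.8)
The plan is to realise $\phi_{t}$ as a composition of $J_{1}$-continuous maps, reducing first to real-valued Skorokhod space and then peeling off the nested integrals and the exponential one at a time, exactly as in the four observations preceding the lemma; the one place that needs a little more care is the outermost integral. Concretely, since $\lambda, \kappa, \mu$ are continuous on $\mathcal{E}$, the maps $c_{\lambda}, c_{\kappa}, c_{\mu} \colon D([0,\infty);\mathcal{E}) \to D([0,\infty);\mathbb{R})$ given by $c_{\lambda}(f) = \lambda \circ f$, etc., are $J_{1}$-continuous (a convergent Skorokhod sequence $f_{n} \to f$ with time changes $\{\lambda_{n}\}$ satisfies $\sup_{[0,t]} \rho(f_{n}(\lambda_{n}(\cdot)), f(\cdot)) \to 0$, and composing with a continuous function preserves this, using the same $\lambda_{n}$). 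Hence it suffices to prove that the functional $(a,b,c) \mapsto \int_{0}^{t} a(s)\, e^{-b(s) \int_{s}^{t} c(r)\,\dd r}\,\dd s$ is continuous and then to apply it to $a = \lambda\circ f$, $b = \kappa\circ f$, $c = \mu\circ f$.

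For the nested structure I would use: (i) the primitive operator $g \mapsto \int_{0}^{\cdot} g(r)\,\dd r$ maps $D([0,\infty);\mathbb{R})$ continuously into $C([0,\infty);\mathbb{R}) \subset D([0,\infty);\mathbb{R})$ \cite[Pr.~3.5.3]{ek1986}, with $g_{n} \to g$ in $J_{1}$ forcing uniform convergence of primitives on compacts; thus $s \mapsto \int_{s}^{t} c(r)\,\dd r = \int_{0}^{t}c(r)\,\dd r - \int_{0}^{s} c(r)\,\dd r$ is continuous in $s$ and depends continuously on $c$; (ii) pointwise multiplication $D([0,\infty);\mathbb{R}) \times D([0,\infty);\mathbb{R}) \to D([0,\infty);\mathbb{R})$ is $J_{1}$-continuous at any pair one of whose coordinates is a continuous function \cite[Th.~4.2]{whitt1980}, so $s \mapsto b(s) \int_{s}^{t} c(r)\,\dd r$ depends continuously on $(b,c)$; (iii) the argument of this last function ranges over a fixed bounded set as long as we stay on $[0,t]$, and $x \mapsto e^{-x}$ is uniformly continuous there, so composing with it is again $J_{1}$-continuous, giving that $E(s) := e^{-b(s) \int_{s}^{t} c(r)\,\dd r}$ depends continuously on $(b,c)$ and satisfies $0 \le E \le 1$.

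The remaining step, $\int_{0}^{t} a(s)\, E(s)\,\dd s$, is where the main obstacle lies: the integrand is a product of two genuinely c\`{a}dl\`{a}g (not continuous) functions of $s$, so one cannot invoke $J_{1}$-continuity of pointwise multiplication followed by evaluation at $t$. The resolution is that, along $f_{n} \to f$, \emph{all} the discontinuities of $a_{n} = \lambda \circ f_{n}$ and of $E_{n}$ sit at discontinuities of $f_{n}$ and are therefore straightened by the \emph{same} time changes $\lambda_{n}$; consequently $(a_{n} E_{n}) \circ \lambda_{n} = (a_{n}\circ\lambda_{n})(E_{n}\circ\lambda_{n}) \to aE$ uniformly on $[0,t]$, i.e.\ $a_{n}E_{n} \to aE$ in $J_{1}$ with those same $\lambda_{n}$. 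Since $\gamma(\lambda_{n}) \to 0$ pins the ratio $(\lambda_{n}(s)-\lambda_{n}(s'))/(s-s')$ between $e^{-\gamma(\lambda_{n})}$ and $e^{\gamma(\lambda_{n})}$ and forces $\lambda_{n}^{-1}(t) \to t$, the change of variables $s = \lambda_{n}(v)$ turns $\int_{0}^{t} a_{n}E_{n}\,\dd s$ into an integral of the uniformly convergent integrand $(a_{n}\circ\lambda_{n})(E_{n}\circ\lambda_{n})$ against a measure uniformly close to Lebesgue measure, which converges to $\int_{0}^{t} aE\,\dd s$. Equivalently, one may package this as the statement that $J_{1}$-convergence of real-valued c\`{a}dl\`{a}g functions, together with uniform boundedness on $[0,t]$, implies $L^{1}([0,t])$-convergence, after which
\begin{align*}
\abs{ \int_{0}^{t} a_{n}(s) E_{n}(s)\,\dd s - \int_{0}^{t} a(s) E(s)\,\dd s } \le \Bigl( \sup_{[0,t]} \abs{E_{n}} \Bigr) \int_{0}^{t} \abs{a_{n}(s) - a(s)}\,\dd s + \Bigl( \sup_{[0,t]} \abs{a} \Bigr) \int_{0}^{t} \abs{E_{n}(s) - E(s)}\,\dd s \to 0
\end{align*}
because $\sup_{[0,t]}\abs{E_{n}} \le 1$ and $\sup_{[0,t]}\abs{a} < \infty$. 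Either way, the only substantive point is this reconciliation of the time changes inside the outermost integral; everything else is bookkeeping with the cited continuity facts, and the continuity of $\lambda, \kappa, \mu$ is essential throughout.
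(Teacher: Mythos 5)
Your proof is correct and follows essentially the same route as the paper: the same decomposition of $\phi_{t}$ into the composition maps $c_{\lambda}, c_{\kappa}, c_{\mu}$, the primitive operator, pointwise multiplication at pairs with one continuous coordinate, and composition with $e^{-x}$ on a bounded range. The only difference is that you make explicit the one step the paper compresses into ``repeated applications of the first three observations'' --- the outermost integral of a product of two genuinely c\`{a}dl\`{a}g functions of $s$ --- by reconciling the time changes (or, equivalently, passing from $J_{1}$-convergence with uniform boundedness to $L^{1}\left( \left[ 0,t \right] \right)$-convergence), which is a correct and welcome clarification rather than a different argument.
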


\section{Properties of Poisson random variables}

For $\gamma \geq 0$, let $P_{0} \left( \gamma \right), P_{1} \left( \gamma \right), P_{2} \left( \gamma \right), \ldots$ denote a sequence of i.i.d.\ random variables that have a Poisson distribution with parameter $\gamma$. In this section, we will fix an arbitrary $x \in \mathbb{R}$, $\delta > 0$, $\lambda \geq 0$ and $\epsilon > 0$ and define $\lambda^{-}_{\epsilon} = \max \left\lbrace 0 , \lambda - \epsilon \right\rbrace$ and $\lambda^{+}_{\epsilon} = \lambda + \epsilon$. Recall that $\openballplus{\lambda}{\epsilon} = \openball{\lambda}{\epsilon} \cap \mathbb{R}_{+}$.

We would like to prove a large deviations lower bound for
\begin{align*}
\liminf_{n \to \infty} \inf_{\gamma \in \openballplus{\lambda}{\epsilon}} \frac{1}{n} \log \mathbb{P} \left( \frac{1}{n} \sum_{i=1}^{n} P_{i} \left( \gamma \right) \in \openball{x}{\delta} \right).
\end{align*}
Of course, the difficulty here is the presence of the infimum over a range of parameters. We will show in Proposition~\ref{pr:restrictedinf} that this infimum may be taken over certain restricted subsets of $\openballplus{\lambda}{\epsilon}$. For each of these subsets we will provide a large deviations lower bound, from which we will derive a lower bound when the infimum is taken over $\openballplus{\lambda}{\epsilon}$. This is the content of Proposition~\ref{pr:largedevopenint}.
\begin{proposition}
\label{pr:restrictedinf}
For all $x \in \mathbb{R}$, $\delta > 0$, $\lambda \geq 0$ and $\epsilon > 0$ it holds that
\begin{align*}
& \inf_{\gamma \in \openballplus{\lambda}{\epsilon}} \mathbb{P} \left( \frac{1}{n} \sum_{i=1}^{n} P_{i} \left( \gamma \right) \in \openball{x}{\delta} \right) = \\
& \displaystyle \inf_{\gamma \in \left( \openball{\lambda}{\epsilon} \cap \closedball{x}{\delta} \right) \cup \left\lbrace \lambda^{-}_{\epsilon} , \lambda^{+}_{\epsilon} \right\rbrace} \mathbb{P} \left( \frac{1}{n} \sum_{i=1}^{n} P_{i} \left( \gamma \right) \in \openball{x}{\delta} \right).
\end{align*}
\end{proposition}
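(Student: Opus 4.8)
The plan is to reduce the identity to an elementary unimodality property, in the parameter $\gamma$, of the probability appearing on both sides. Since $P_{1}(\gamma) , \dotsc , P_{n}(\gamma)$ are i.i.d.\ and Poisson with parameter $\gamma$, the sum $\sum_{i=1}^{n} P_{i}(\gamma)$ is Poisson with parameter $n\gamma$, so
\begin{align*}
h(\gamma) &:= \mathbb{P}\left( \frac{1}{n} \sum_{i=1}^{n} P_{i}(\gamma) \in \openball{x}{\delta} \right) = \sum_{k \in A} e^{-n\gamma} \frac{(n\gamma)^{k}}{k!}, \\
A &= \left\lbrace k \in \mathbb{Z}_{\geq 0} : n(x-\delta) < k < n(x+\delta) \right\rbrace .
\end{align*}
The set $A$ is a finite block of consecutive integers; if it is empty then $h \equiv 0$ and the claim is trivial, so assume $A = \left\lbrace k_{-} , \dotsc , k_{+} \right\rbrace$ with $k_{-} \leq k_{+}$. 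Then $h$ is a finite sum of products of an exponential and a polynomial, hence $h \in C^{\infty}\left( \left[ 0,\infty \right) \right)$. The crux is the claim that $h$ is \emph{unimodal} with mode inside $\closedball{x}{\delta}$: there is $\gamma^{*} \in \closedball{x}{\delta}$ with $h$ non-decreasing on $\left[ 0,\gamma^{*} \right]$ and non-increasing on $\left[ \gamma^{*},\infty \right)$.

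To establish this, write $h(\gamma) = \mathbb{P}\left( \mathrm{Poisson}(n\gamma) \geq k_{-} \right) - \mathbb{P}\left( \mathrm{Poisson}(n\gamma) \geq k_{+}+1 \right)$ and use the standard identity $\tfrac{\dd}{\dd t} \mathbb{P}\left( \mathrm{Poisson}(t) \geq m \right) = e^{-t} t^{m-1} / (m-1)!$ for $m \geq 1$ (with $\mathbb{P}\left( \mathrm{Poisson}(t) \geq 0 \right) \equiv 1$). This gives, for $k_{-} \geq 1$,
\[ h'(\gamma) = n e^{-n\gamma} \left( \frac{(n\gamma)^{k_{-}-1}}{(k_{-}-1)!} - \frac{(n\gamma)^{k_{+}}}{k_{+}!} \right) , \]
and $h'(\gamma) = - n e^{-n\gamma} (n\gamma)^{k_{+}} / k_{+}! \leq 0$ when $k_{-} = 0$. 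Since $k_{+} - (k_{-}-1) \geq 1$, in the case $k_{-} \geq 1$ the bracketed difference is positive for $n\gamma < t^{*}$ and negative for $n\gamma > t^{*}$, where $t^{*} = \left( k_{+}! / (k_{-}-1)! \right)^{1/(k_{+}-k_{-}+1)}$ is the geometric mean of $k_{-} , \dotsc , k_{+}$, so in particular $k_{-} \leq t^{*} \leq k_{+}$. Putting $\gamma^{*} = t^{*}/n$ (and $\gamma^{*} = 0$ if $k_{-} = 0$) yields the unimodality. Moreover $k_{-} > n(x-\delta)$ and $k_{+} < n(x+\delta)$ by definition of $A$, so $\gamma^{*} \in \left[ k_{-}/n , k_{+}/n \right] \subset \left( x-\delta , x+\delta \right) \subseteq \closedball{x}{\delta}$; in the case $k_{-} = 0$ one has $x-\delta < 0 < x+\delta$, so again $\gamma^{*} = 0 \in \closedball{x}{\delta}$.

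Granting the above, the identity follows. Let $\gamma \in \openballplus{\lambda}{\epsilon}$. If $\gamma \in \closedball{x}{\delta}$ then $\gamma$ lies in the restricted set. If $\gamma > x + \delta$, then $\gamma > \gamma^{*}$ and $\lambda^{+}_{\epsilon} = \lambda + \epsilon > \gamma$, so $\gamma , \lambda^{+}_{\epsilon} \in \left[ \gamma^{*} , \infty \right)$ and monotonicity gives $h(\gamma) \geq h\left( \lambda^{+}_{\epsilon} \right)$; symmetrically, if $\gamma < x - \delta$, then $\gamma < \gamma^{*}$ and $0 \leq \lambda^{-}_{\epsilon} \leq \gamma$, so $h(\gamma) \geq h\left( \lambda^{-}_{\epsilon} \right)$. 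Hence the infimum of $h$ over $\openballplus{\lambda}{\epsilon}$ is at least its infimum over $\left( \openball{\lambda}{\epsilon} \cap \closedball{x}{\delta} \right) \cup \left\lbrace \lambda^{-}_{\epsilon} , \lambda^{+}_{\epsilon} \right\rbrace$. Conversely, $\openball{\lambda}{\epsilon} \cap \closedball{x}{\delta}$ (intersected with $\left[ 0,\infty \right)$, the only part where $h$ is defined) is contained in $\openballplus{\lambda}{\epsilon}$, while $\lambda^{-}_{\epsilon}$ and $\lambda^{+}_{\epsilon}$ lie in its closure, so continuity of $h$ gives $h\left( \lambda^{\pm}_{\epsilon} \right) \geq \inf_{\openballplus{\lambda}{\epsilon}} h$. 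Combining the two bounds yields the claimed equality.

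The main obstacle is the unimodality statement together with the localisation $\gamma^{*} \in \closedball{x}{\delta}$; both rest on the explicit derivative of the Poisson tail and on the elementary fact that the geometric mean of $k_{-} , \dotsc , k_{+}$ lies between $k_{-}$ and $k_{+}$. The reduction to a single Poisson parameter, the smoothness of $h$, and the endpoint comparisons are routine.
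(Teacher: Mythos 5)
Your proof is correct and follows essentially the same route as the paper's: reduce to a single Poisson variable with parameter $n\gamma$ and use monotonicity of the window probability in $\gamma$ on either side of $\closedball{x}{\delta}$ to push any $\gamma$ outside that window to the endpoints $\lambda^{-}_{\epsilon}$, $\lambda^{+}_{\epsilon}$. The only difference is in how the monotonicity is obtained --- the paper compares the point masses $\mathbb{P} \left( P_{0} \left( n\gamma \right) = k \right)$ termwise in $\gamma$, whereas you differentiate the Poisson tail probabilities and locate the mode explicitly at the geometric mean of the lattice points in the window --- which gives a slightly stronger (globally unimodal) but equivalent conclusion.
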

\begin{proof}
Let $0 \leq \gamma_{-} \leq \gamma_{+} < \infty$. For $y \in \mathbb{R}$ it holds that
\begin{align}
\mathbb{P} \left( P_{0} \left( \gamma_{+} \right) = y \right) \geq \mathbb{P} \left( P_{0} \left( \gamma_{-} \right) = y \right) & & \text{ if } & & y \geq \gamma_{+} \geq \gamma_{-} \label{eq:gammaplusineq}
\intertext{and}
\mathbb{P} \left( P_{0} \left( \gamma_{+} \right) = y \right) \leq \mathbb{P} \left( P_{0} \left( \gamma_{-} \right) = y \right) & & \text{ if } & & \gamma_{+} \geq \gamma_{-} \geq y. \label{eq:gammaminineq}
\end{align}
Because we are working with i.i.d.\ Poisson random variables, we may write
\begin{align}
\mathbb{P} \left( \frac{1}{n} \sum_{i=1}^{n} P_{i} \left( \gamma \right) \in \openball{x}{\delta} \right) = \mathbb{P} \left( P_{0} \left( n \gamma \right) \in \left( n \left( x - \delta \right) , n \left( x + \delta \right)  \right) \right). \label{eq:sumofpoisson}
\end{align}
Now the statement of the proposition is an easy consequence of the equations \eqref{eq:gammaplusineq}, \eqref{eq:gammaminineq} and \eqref{eq:sumofpoisson} combined.
\end{proof}
\begin{proposition}
\label{pr:limxdeltazero}
Let $x \in \mathbb{R}$ and $\delta > 0$. If $\openballplus{x}{\delta} \not= \emptyset$, then
\begin{align*}
\lim_{n \to \infty} \inf_{\gamma \in \closedballplus{x}{\delta}} \frac{1}{n} \log \mathbb{P} \left( \frac{1}{n} \sum_{i=1}^{n} P_{i} \left( \gamma \right) \in \openball{x}{\delta} \right) = 0.
\end{align*}
\end{proposition}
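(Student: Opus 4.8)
The plan is to reduce the infimum over the one‑parameter family $\closedballplus{x}{\delta}$ to a minimum over its two endpoints, using a unimodality property of the Poisson family, and then to estimate those two endpoint probabilities directly.

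First, the upper bound is free: each probability $\mathbb{P}\left(\frac{1}{n}\sum_{i=1}^{n}P_{i}\left(\gamma\right)\in\openball{x}{\delta}\right)$ is at most $1$, so $\frac{1}{n}\log$ of it is $\le0$ for every $n$ and $\gamma$, and hence $\limsup_{n\to\infty}\inf_{\gamma\in\closedballplus{x}{\delta}}\frac{1}{n}\log\mathbb{P}\left(\cdots\right)\le0$; the content of the statement is the matching lower bound, i.e.\ that the infimum over $\gamma$ decays subexponentially. As a first step, since a sum of $n$ i.i.d.\ Poisson$\left(\gamma\right)$ variables is Poisson$\left(n\gamma\right)$, equation~\eqref{eq:sumofpoisson} lets me rewrite $\mathbb{P}\left(\frac{1}{n}\sum_{i=1}^{n}P_{i}\left(\gamma\right)\in\openball{x}{\delta}\right)=\mathbb{P}\left(P_{0}\left(n\gamma\right)\in A_{n}\right)$, where $A_{n}:=\left\lbrace y\in\mathbb{Z}_{\geqslant 0}:n\left(x-\delta\right)<y<n\left(x+\delta\right)\right\rbrace$ is a block of consecutive integers, non‑empty for all large $n$ because $\openballplus{x}{\delta}\ne\emptyset$ forces $x+\delta>0$ and the length $2n\delta$ of its defining interval tends to infinity.

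The key step is the following unimodality fact: for any block $A=\left\lbrace a,a+1,\ldots,b\right\rbrace$ of consecutive non‑negative integers, $\mu\mapsto\mathbb{P}\left(P_{0}\left(\mu\right)\in A\right)$ is nondecreasing and then nonincreasing on $\left[0,\infty\right)$. I would prove this by differentiating the finite sum term by term and telescoping, which gives $\tfrac{\dd}{\dd\mu}\mathbb{P}\left(P_{0}\left(\mu\right)\in A\right)=\mathbb{P}\left(P_{0}\left(\mu\right)=a-1\right)-\mathbb{P}\left(P_{0}\left(\mu\right)=b\right)$ with the convention $\mathbb{P}\left(P_{0}\left(\mu\right)=-1\right)=0$; the ratio $\mathbb{P}\left(P_{0}\left(\mu\right)=b\right)/\mathbb{P}\left(P_{0}\left(\mu\right)=a-1\right)$ equals $\mu^{b-a+1}\left(a-1\right)!/b!$, read as $+\infty$ if $a=0$, which is increasing in $\mu$, so the derivative changes sign at most once, from nonnegative to nonpositive. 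Such a function attains its minimum over a closed interval at an endpoint, so with $\closedballplus{x}{\delta}=\left[\ell_{0},r_{0}\right]$, $\ell_{0}=\max\left\lbrace0,x-\delta\right\rbrace$ and $r_{0}=x+\delta$, I obtain for all large $n$
\begin{align*}
\inf_{\gamma\in\closedballplus{x}{\delta}}\mathbb{P}\left(\frac{1}{n}\sum_{i=1}^{n}P_{i}\left(\gamma\right)\in\openball{x}{\delta}\right)=\min\left\lbrace\mathbb{P}\left(P_{0}\left(n\ell_{0}\right)\in A_{n}\right),\mathbb{P}\left(P_{0}\left(nr_{0}\right)\in A_{n}\right)\right\rbrace.
\end{align*}
It then remains to bound each endpoint probability below by a quantity whose $\frac{1}{n}\log$ tends to $0$. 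For $\gamma=r_{0}=x+\delta$, the mean $n\left(x+\delta\right)$ of $P_{0}\left(n\left(x+\delta\right)\right)$ is the upper edge of $A_{n}$ while the lower edge is at distance $2n\delta$, so for large $n$ the integer $y_{n}:=\lceil n\left(x+\delta\right)\rceil-1$ lies in $A_{n}$ with $\left|y_{n}-n\left(x+\delta\right)\right|\le1$, and Stirling's formula gives $\mathbb{P}\left(P_{0}\left(n\left(x+\delta\right)\right)=y_{n}\right)\ge cn^{-1/2}$ for some $c>0$ and all large $n$; hence $\frac{1}{n}\log\mathbb{P}\left(P_{0}\left(nr_{0}\right)\in A_{n}\right)\to0$ (equivalently, the central limit theorem shows this probability converges to $\tfrac12$). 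The endpoint $\gamma=\ell_{0}$ is handled the same way via the integer $\lfloor n\left(x-\delta\right)\rfloor+1$ when $x-\delta>0$; and when $x-\delta<0$, simply note $0\in A_{n}$ and $P_{0}\left(0\right)=0$ a.s., so $\mathbb{P}\left(P_{0}\left(n\ell_{0}\right)\in A_{n}\right)=1$. With the displayed identity and the trivial upper bound this gives $\lim_{n\to\infty}\inf_{\gamma\in\closedballplus{x}{\delta}}\frac{1}{n}\log\mathbb{P}\left(\cdots\right)=0$.

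The main obstacle is the middle step: without the unimodality property the infimum runs over the full interval $\closedballplus{x}{\delta}$ and is not obviously attained at a convenient point, so one must either establish the reduction to the two endpoints or else produce a uniform‑in‑$\gamma$ subexponential lower bound directly (e.g.\ a uniform Chernoff bound on both tails together with a Stirling estimate of the Poisson mass near the mean). The rest is routine, modulo minor bookkeeping about the non‑integrality of $n\left(x\pm\delta\right)$ and the borderline endpoint $\ell_{0}=0$.
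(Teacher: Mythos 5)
Your argument is correct, but it takes a genuinely different route from the paper's. The paper does not reduce the infimum to the endpoints at all: for \emph{every} $\gamma \in \closedballplus{x}{\delta}$ it selects a lattice point $\gamma^{*}_{n} \in \{\tfrac{1}{n}\lfloor n\gamma\rfloor , \tfrac{1}{n}\lceil n\gamma\rceil\} \cap \openball{x}{\delta}$, bounds the probability below by the single atom $\mathbb{P} ( P_{0}(n\gamma) = n\gamma^{*}_{n} )$, and uses $n! \leq n^{n+1/2}e^{-n+1}$ to produce a lower bound of order $n^{-1/2}$ that is \emph{uniform} in $\gamma$ over the whole interval; the limit then follows from the trivial upper bound. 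Your key step is instead a structural one: the unimodality of $\mu \mapsto \mathbb{P}(P_{0}(\mu) \in A)$ for a block $A$ of consecutive integers, obtained by telescoping the derivative to $\mathbb{P}(P_{0}(\mu)=a-1)-\mathbb{P}(P_{0}(\mu)=b)$ and noting the likelihood ratio is monotone in $\mu$. This is correct, reduces the infimum to two endpoints, and trades the uniformity issue for two local Stirling (or CLT) estimates. It is also strictly stronger than the inequalities \eqref{eq:gammaplusineq}--\eqref{eq:gammaminineq} used in Proposition~\ref{pr:restrictedinf}, so your lemma would let one collapse Propositions~\ref{pr:restrictedinf} and~\ref{pr:limxdeltazero} into a single endpoint reduction; what the paper's route buys is that it needs no structural information about $\gamma \mapsto p_{n}$ beyond one well-placed atom. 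One shared caveat: your case split treats $x-\delta>0$ and $x-\delta<0$ but not $x=\delta$, and the paper's proof has the same blind spot (there $\gamma=0 \in \closedballplus{x}{\delta}$, $0 \notin \openball{x}{\delta}$, the probability at $\gamma=0$ is exactly zero, and the stated limit is actually $-\infty$); your endpoint reduction in fact makes this degenerate case visible, so it is worth recording that the proposition implicitly excludes it.
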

\begin{proof}
For a Borel set $A \subset \mathbb{R}$, define $p_{n} \left( A \middle\vert \gamma \right) = \mathbb{P} \left( \frac{1}{n} \sum_{i=1}^{n} P_{i} \left( \gamma \right) \in A \right)$. Now suppose that $\openballplus{x}{\delta} \not= \emptyset$. Then the diameter of $\openballplus{x}{\delta}$ is strictly positive and bounded above by $r = \min \left\lbrace 2 \delta , x + \delta \right\rbrace$.

Let $N_{r} \in \mathbb{N}$ be such that $\tfrac{1}{N_{r}} < \tfrac{r}{2}$. Then for all $n \geq N_{r}$ and $\gamma \in \closedballplus{x}{\delta}$ we define $\gamma^{-}_{n} = \tfrac{1}{n} \left\lfloor n \gamma \right\rfloor$, $\gamma^{+}_{n} = \tfrac{1}{n} \left\lceil n \gamma \right\rceil$ and
\begin{align*}
\gamma^{*}_{n} = \min \left\lbrace \left\lbrace \gamma^{-}_{n} , \gamma^{+}_{n} \right\rbrace \cap \openball{x}{\delta} \right\rbrace.
\end{align*}
Then $\max \left\lbrace \abs{\gamma - \gamma^{-}_{n}} , \abs{\gamma - \gamma^{+}_{n}} \right\rbrace \leq \tfrac{1}{n} < \tfrac{r}{2}$ and $p_{n} \left( \openball{x}{\delta} \middle\vert \gamma \right) \geq p_{n} \left( \left\lbrace \gamma^{*}_{n} \right\rbrace \middle\vert \gamma \right)$ for each $n \in \mathbb{N}$ and each $\gamma \in \closedballplus{x}{\delta}$. Using that $n! \leq n^{n + 1/2} e^{-n+1}$, we get
\begin{align*}
p_{n} \left( \left\lbrace \gamma^{*}_{n} \right\rbrace \middle\vert \gamma \right)
&\geq \left(  \frac{n \gamma}{n \gamma + 1}  \right)^{n \gamma^{*}_{n}} e^{n \left( \gamma^{*}_{n} - \gamma \right)} e^{-1} \left( n \gamma^{*}_{n} \right)^{-1/2}\\
&\geq \left( 1 - \frac{1}{n \left( x + \delta \right) + 1} \right)^{n \left( x + \delta \right)} e^{-2} \left( n \left( x + \delta \right) \right)^{-1/2}
\end{align*}
for each $n \in \mathbb{N}$ and each $\gamma \in \closedballplus{x}{\delta}$. This implies the statement.
\end{proof}
Combined with Cram\'{e}r's~Theorem in $\mathbb{R}$, the two previous propositions enable us to prove the following large deviations bound. Note that we prove an equality rather than an inequality and that the limit exists.
\begin{proposition}
\label{pr:largedevopenint}
For all $x \in \mathbb{R}$, $\delta > 0$, $\lambda \geq 0$ and $\epsilon > 0$ it holds that
\begin{align}
\label{eq:openintlowbnd}
\lim_{n \to \infty} \inf_{\gamma \in \openballplus{\lambda}{\epsilon}} \frac{1}{n} \log \mathbb{P} \left( \frac{1}{n} \sum_{i=1}^{n} P_{i} \left( \gamma \right) \in \openball{x}{\delta} \right) = \min_{\gamma \in \left\lbrace \lambda^{-}_{\epsilon} , \lambda^{+}_{\epsilon} \right\rbrace} \left[ - \inf_{a \in \openball{x}{\delta}} \ell \left( \gamma ; a \right) \right].
\end{align}
\end{proposition}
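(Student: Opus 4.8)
The plan is to collapse the infimum over the continuum $\openballplus{\lambda}{\epsilon}$ into a minimum of three quantities via Proposition~\ref{pr:restrictedinf}, and then to handle these one at a time. Throughout I would write $p_{n}(A \mid \gamma) = \mathbb{P}\bigl( \tfrac{1}{n}\sum_{i=1}^{n}P_{i}(\gamma) \in A \bigr)$ and use that $t \mapsto \tfrac{1}{n}\log t$ is increasing and continuous on $[0,\infty]$, so that $\tfrac{1}{n}\log\inf_{\gamma\in S}p_{n}(A\mid\gamma) = \inf_{\gamma\in S}\tfrac{1}{n}\log p_{n}(A\mid\gamma)$ and $\tfrac{1}{n}\log$ commutes with finite minima. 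The degenerate case $\openballplus{x}{\delta}=\emptyset$ is trivial: then $\openball{x}{\delta}\subset(-\infty,0)$, hence $p_{n}(\openball{x}{\delta}\mid\gamma)=0$ for all $\gamma\ge0$ and $\ell(\gamma;a)=\infty$ for all $a\in\openball{x}{\delta}$ by \eqref{eq:poisratefunc}, so both sides of \eqref{eq:openintlowbnd} equal $-\infty$. From now on assume $\openballplus{x}{\delta}\neq\emptyset$.

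Applying Proposition~\ref{pr:restrictedinf} and taking an infimum over a union, one gets, for every $n$,
\begin{align*}
\inf_{\gamma\in\openballplus{\lambda}{\epsilon}}p_{n}(\openball{x}{\delta}\mid\gamma) = \min\bigl\{\, \inf_{\gamma\in\openball{\lambda}{\epsilon}\cap\closedball{x}{\delta}}p_{n}(\openball{x}{\delta}\mid\gamma)\,,\ p_{n}(\openball{x}{\delta}\mid\lambda^{-}_{\epsilon})\,,\ p_{n}(\openball{x}{\delta}\mid\lambda^{+}_{\epsilon})\,\bigr\},
\end{align*}
where the first term is read as $+\infty$ if $\openball{\lambda}{\epsilon}\cap\closedball{x}{\delta}=\emptyset$ (all sets being implicitly intersected with $[0,\infty)$, as Poisson parameters are nonnegative). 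It therefore suffices to compute the limit of $\tfrac{1}{n}\log$ of each of the three terms. For the first term I would use $\openball{\lambda}{\epsilon}\cap\closedball{x}{\delta}\subset\closedballplus{x}{\delta}$, so that it is bounded below by $\inf_{\gamma\in\closedballplus{x}{\delta}}\tfrac{1}{n}\log p_{n}(\openball{x}{\delta}\mid\gamma)$, which converges to $0$ by Proposition~\ref{pr:limxdeltazero}; being also bounded above by $0$, it converges to $0$.

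For the endpoint terms, fix $\gamma\in\{\lambda^{-}_{\epsilon},\lambda^{+}_{\epsilon}\}$. Since the Poisson law has a finite cumulant generating function, Cram\'{e}r's Theorem in $\mathbb{R}$ (cf.\ \cite{dz1998}) gives that $\tfrac{1}{n}\sum_{i=1}^{n}P_{i}(\gamma)$ satisfies an LDP with rate function $\ell(\gamma;\cdot)$, and the open-set lower bound together with the closed-set upper bound yield
\begin{align*}
-\inf_{a\in\openball{x}{\delta}}\ell(\gamma;a) \le \liminf_{n\to\infty}\frac{1}{n}\log p_{n}(\openball{x}{\delta}\mid\gamma) \le \limsup_{n\to\infty}\frac{1}{n}\log p_{n}(\openball{x}{\delta}\mid\gamma) \le -\inf_{a\in\closedball{x}{\delta}}\ell(\gamma;a).
\end{align*}
I would then close the gap by showing $\inf_{\closedball{x}{\delta}}\ell(\gamma;\cdot)=\inf_{\openball{x}{\delta}}\ell(\gamma;\cdot)$: for $\gamma>0$ this is immediate from \eqref{eq:poisratefunc}, as $\ell(\gamma;\cdot)$ is finite and continuous on $[0,\infty)$ and $\infty$ on $(-\infty,0)$ while $\closedballplus{x}{\delta}$ is the closure of the nonempty set $\openballplus{x}{\delta}$; the only exceptional case is $\gamma=\lambda^{-}_{\epsilon}=0$ (i.e.\ $\lambda\le\epsilon$), handled directly since $\sum_{i}P_{i}(0)\equiv0$ forces $\tfrac{1}{n}\log p_{n}(\openball{x}{\delta}\mid0)$ to be $0$ if $0\in\openball{x}{\delta}$ and $-\infty$ otherwise, matching $-\inf_{a\in\openball{x}{\delta}}\ell(0;a)$. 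Hence $\tfrac{1}{n}\log p_{n}(\openball{x}{\delta}\mid\gamma)\to-\inf_{a\in\openball{x}{\delta}}\ell(\gamma;a)$ for each endpoint.

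Taking the minimum of three convergent sequences, the limit in \eqref{eq:openintlowbnd} exists and equals $\min\{0,\,-\inf_{a\in\openball{x}{\delta}}\ell(\lambda^{-}_{\epsilon};a),\,-\inf_{a\in\openball{x}{\delta}}\ell(\lambda^{+}_{\epsilon};a)\}$; the term $0$ drops out because $\ell\ge0$ makes the other two terms nonpositive, giving exactly the claimed identity. I expect the only genuine obstacle to be the first term: one must know that for parameters $\gamma$ lying within $\delta$ of $x$ the probability $p_{n}(\openball{x}{\delta}\mid\gamma)$ decays only sub-exponentially, so it cannot depress the exponential rate — this is precisely Proposition~\ref{pr:limxdeltazero}, with Proposition~\ref{pr:restrictedinf} doing the work of reducing the rate-controlling parameters to the two endpoints $\lambda^{\pm}_{\epsilon}$.
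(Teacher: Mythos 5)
Your proposal is correct and follows essentially the same route as the paper: reduce via Proposition~\ref{pr:restrictedinf}, kill the middle term with Proposition~\ref{pr:limxdeltazero}, and apply Cram\'{e}r's Theorem at the two endpoints $\lambda^{\pm}_{\epsilon}$. You merely spell out a few details the paper leaves implicit (the degenerate cases and the continuity-set argument matching the open- and closed-ball infima of $\ell(\gamma;\cdot)$), which is fine.
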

\begin{proof}
Define $p_{n} \left( A \, \middle\vert \, \gamma \right) = \mathbb{P} \left( \frac{1}{n} \sum_{i=1}^{n} P_{i} \left( \gamma \right) \in A \right)$ for Borel sets $A \subset \mathbb{R}$ and $C = \left( \openball{\lambda}{\epsilon} \cap \closedball{x}{\delta} \right) \cup \left\lbrace \lambda^{-}_{\epsilon} , \lambda^{+}_{\epsilon} \right\rbrace$. 
Thanks to Proposition~\ref{pr:restrictedinf} we may write
\begin{align*}
\lim_{n \to \infty} \inf_{\gamma \in \openballplus{\lambda}{\epsilon}} \frac{1}{n} \log p_{n} \left( \openball{x}{\delta} \, \middle\vert  \, \gamma \right) &=
\lim_{n \to \infty} \displaystyle \inf_{\gamma \in C} \frac{1}{n} \log p_{n} \left( \openball{x}{\delta} \, \middle\vert \, \gamma \right).
\end{align*}
It follows from Proposition~\ref{pr:limxdeltazero} that we may restrict the infimum to the set $\left\lbrace \lambda^{-}_{\epsilon} , \lambda^{+}_{\epsilon} \right\rbrace$, so
\begin{align*}
\lim_{n \to \infty} \displaystyle \inf_{\gamma \in C} \frac{1}{n} \log  p_{n} \left( \openball{x}{\delta} \, \middle\vert  \, \gamma \right)
&= \lim_{n \to \infty} \displaystyle \min_{\gamma \in \left\lbrace \lambda^{-}_{\epsilon} , \lambda^{+}_{\epsilon} \right\rbrace} \frac{1}{n} \log  p_{n} \left( \openball{x}{\delta} \, \middle\vert  \, \gamma \right) \\
&= \displaystyle \min_{\gamma \in \left\lbrace \lambda^{-}_{\epsilon} , \lambda^{+}_{\epsilon} \right\rbrace} \lim_{n \to \infty} \frac{1}{n} \log  p_{n} \left( \openball{x}{\delta} \, \middle\vert  \, \gamma \right) \\
&= \min_{\gamma \in \left\lbrace \lambda^{-}_{\epsilon} , \lambda^{+}_{\epsilon} \right\rbrace} \left[ - \inf_{a \in \openball{x}{\delta}} \ell \left( \gamma ; a \right) \right].
\end{align*}
The last equality is an application of Cram\'{e}r's~Theorem for i.i.d.\ Poisson random variables; the limit exists because $\openball{x}{\delta}$ is a continuity set for the Fenchel-Legendre transform corresponding to a Poisson distribution.
\end{proof}

As shown in the inequalities \eqref{eq:ell_leq} and \eqref{eq:ell_geq}, the Fenchel-Legendre transforms corresponding to Poisson distributions are nicely ordered in some sense. This property leads to the following propositions. Their proofs are elementary but tedious and are therefore omitted.
\begin{proposition}
\label{pr:verifyvaradhan}
Let $F \subset \mathbb{R}$ be closed and define $f \colon \left[ 0,\infty \right) \to \left[ -\infty,0 \right]$ via
\begin{align*}
f \left( \gamma \right) = - \inf_{a \in F} \ell \left( \gamma;a \right).
\end{align*}
If $F \subset \left( -\infty,0\right)$, then $f \equiv -\infty$. If $F \cap \left[ 0,\infty \right) \not= \emptyset$, then $f$
is real-valued and continuous on $\left( 0,\infty \right)$. Additionally, $\lim_{\gamma \downarrow 0} f \left( \gamma \right) = f \left( 0 \right)$, where $f \left( 0 \right) = 0$ if $0 \in F$ and $f \left( 0 \right) = \infty$ if $0 \not\in F$. In any case, $f^{-1} \left( \left[ a,b \right] \right)$ is closed for all $a,b \in \left( -\infty, 0 \right]$ with $a \leq b$.
\end{proposition}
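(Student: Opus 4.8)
The plan is to write $f(\gamma)=\sup_{a\in F}(-\ell(\gamma;a))$ and to exploit the elementary shape properties of $a\mapsto\ell(\gamma;a)$ visible from \eqref{eq:poisratefunc}: for $\gamma>0$ this function is $+\infty$ on $(-\infty,0)$, equals $\gamma$ at $0$, decreases strictly to $0$ on $[0,\gamma]$ and increases strictly to $+\infty$ on $[\gamma,\infty)$; moreover $(\gamma,a)\mapsto\ell(\gamma;a)$ is jointly continuous on $(0,\infty)\times[0,\infty)$, and $\ell(\gamma;a)\to\infty$ as $a\to\infty$, uniformly for $\gamma$ in any compact subinterval of $(0,\infty)$. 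With these facts in hand, the two easy cases come first. If $F\subset(-\infty,0)$, then $\ell(\gamma;a)=\infty$ for every $a\in F$, so $f\equiv-\infty$, which also settles the closedness statement. Assume henceforth that $F\cap[0,\infty)\neq\emptyset$ and fix $a_{0}\in F\cap[0,\infty)$; since $\ell\geq0$ we always have $f\leq0$, and for $\gamma>0$ we have $f(\gamma)\geq-\ell(\gamma;a_{0})>-\infty$, so $f$ is real-valued on $(0,\infty)$.

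Next I would prove continuity of $g:=-f=\inf_{a\in F}\ell(\cdot;a)=\inf_{a\in F\cap[0,\infty)}\ell(\cdot;a)$ on $(0,\infty)$. Upper semi-continuity is automatic, since $g$ is an infimum of the continuous functions $\gamma\mapsto\ell(\gamma;a)$. For lower semi-continuity, fix $\gamma>0$ and a compact interval $K\subset(0,\infty)$ containing $\gamma$ in its interior; by the uniform coercivity above there is $R<\infty$ with $\ell(\gamma';a)>\ell(\gamma';a_{0})$ for all $\gamma'\in K$ and all $a>R$, so for $\gamma'\in K$ the infimum defining $g(\gamma')$ is attained at some $a^{*}(\gamma')\in F\cap[0,R]$. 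Given $\gamma_{n}\to\gamma$, pass to a subsequence realising $\liminf_{n}g(\gamma_{n})$ and, using compactness of $F\cap[0,R]$, a further subsequence along which $a^{*}(\gamma_{n})\to a^{*}\in F\cap[0,R]$; joint continuity of $\ell$ then gives $\liminf_{n}g(\gamma_{n})=\lim_{n}\ell(\gamma_{n};a^{*}(\gamma_{n}))=\ell(\gamma;a^{*})\geq g(\gamma)$. Hence $g$, and so $f$, is continuous on $(0,\infty)$. (One could instead note that for $\gamma\notin F$ one has $g(\gamma)=\min\{\ell(\gamma;c_{-}),\ell(\gamma;c_{+})\}$ with $c_{-}=\sup(F\cap[0,\gamma])$ and $c_{+}=\inf(F\cap[\gamma,\infty))$, and verify continuity of this explicit expression directly; that is the more computational alternative the paper alludes to.)

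It then remains to handle the endpoint $\gamma=0$. If $0\in F$, then $\ell(0;0)=0$ gives $f(0)=0$, while $-\gamma=-\ell(\gamma;0)\leq f(\gamma)\leq0$ forces $f(\gamma)\to0=f(0)$ as $\gamma\downarrow0$. If $0\notin F$ but $F\cap[0,\infty)\neq\emptyset$, then $\ell(0;a)=\infty$ for every $a\in F$, so $f(0)=-\infty$; and since $F$ is closed there is $\eta>0$ with $F\cap[0,\infty)\subset[\eta,\infty)$, whence for $0<\gamma<\eta$ monotonicity of $\ell(\gamma;\cdot)$ gives $g(\gamma)\geq\ell(\gamma;\eta)=\gamma-\eta+\eta\log(\eta/\gamma)\to\infty$, i.e.\ $f(\gamma)\to-\infty=f(0)$; the only remaining possibility, $F\subset(-\infty,0)$, was dealt with above. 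In all cases $\lim_{\gamma\downarrow0}f(\gamma)=f(0)$, so $f$ is continuous on all of $[0,\infty)$ as a map into $[-\infty,0]$.

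Finally, for $a\leq b$ in $(-\infty,0]$ the set $[a,b]$ is closed in $[-\infty,0]$ (its complement $[-\infty,a)\cup(b,0]$ is open), so $f^{-1}([a,b])$ is closed in $[0,\infty)$ by continuity of $f$; this is exactly the hypothesis needed to apply Lemma~\ref{lem:varvar}. The single genuinely non-routine step in this programme is the lower semi-continuity of $g$ on $(0,\infty)$: it hinges on keeping the minimizers $a^{*}(\gamma)$ in one fixed bounded set as $\gamma$ ranges over a compact interval, which is precisely what the uniform coercivity of $\ell(\gamma;\cdot)$ — a quantitative form of the ordering \eqref{eq:ell_leq}--\eqref{eq:ell_geq} — provides.
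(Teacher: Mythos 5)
Your argument is correct and complete. Note that the paper itself gives no proof of Proposition~\ref{pr:verifyvaradhan} (it is dismissed as ``elementary but tedious''), so there is nothing to compare against; what you have written is precisely the kind of careful argument the authors chose to omit. The one genuinely delicate point --- lower semi-continuity of $g=\inf_{a\in F}\ell(\cdot\,;a)$ on $(0,\infty)$ --- is handled correctly: the uniform coercivity of $a\mapsto\ell(\gamma';a)$ over $\gamma'$ in a compact subinterval of $(0,\infty)$ does confine the minimizers to a fixed compact subset of $F\cap[0,\infty)$, and the joint continuity of $\ell$ on $(0,\infty)\times[0,\infty)$ (including along $a\downarrow 0$, where $\ell(\gamma;a)\to\gamma=\ell(\gamma;0)$) then closes the subsequence argument. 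You also silently correct what must be a typo in the statement: since $f$ takes values in $\left[-\infty,0\right]$, the value of $f(0)$ when $0\notin F$ is $-\infty$, not $\infty$, and your treatment of the limit $\gamma\downarrow 0$ via $g(\gamma)\geq\ell(\gamma;\eta)\to\infty$ is consistent with that reading.
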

 
\begin{proposition}
\label{pr:Iratefunction}
Let $\mathcal{R} \subset \left[ 0,\infty \right)$ be a non-empty, closed set. Let $\psi \colon \mathbb{R} \to \left[ 0,\infty \right]$ be a lower semi-continuous function. Then the function $I \colon \mathbb{R} \to \left[ 0,\infty \right]$ defined via
\begin{align*}
I \left( a \right) = \inf_{\gamma \in \mathcal{R}} \left[ \ell \left( \gamma ; a \right) + \psi \left( \gamma \right) \right]
\end{align*}
is a lower semi-continuous function.
\end{proposition}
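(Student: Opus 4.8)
The plan is to prove lower semi-continuity of $I$ by showing that $\liminf_{n} I(a_{n}) \ge I(a)$ for every convergent sequence $a_{n} \to a$ in $\mathbb{R}$ (equivalently, that each sublevel set $\left\lbrace I \le c \right\rbrace$ is closed). Two ingredients drive the argument: a coercivity estimate that confines near-minimizing parameters to a bounded set, and the joint lower semi-continuity of the map $(\gamma,a) \mapsto \ell \left( \gamma ; a \right) + \psi \left( \gamma \right)$ on $\left[ 0,\infty \right) \times \mathbb{R}$. Nonnegativity of $I$ is immediate since $\ell \ge 0$ and $\psi \ge 0$.

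First I would establish two elementary facts about $\ell$ using the explicit formula \eqref{eq:poisratefunc}. \emph{(i) Joint lower semi-continuity:} the map $(\gamma,a) \mapsto \ell \left( \gamma ; a \right)$ is lower semi-continuous on $\left[ 0,\infty \right) \times \mathbb{R}$. On $\left( 0,\infty \right) \times \left( 0,\infty \right)$ the formula shows $\ell$ is in fact continuous, and on $\left( -\infty,0 \right)$ it is constantly $\infty$; the only points requiring care are those with $\gamma = 0$ or $a = 0$. If $a > 0$ and $\gamma_{n} \downarrow 0$ (with $a_{n} \to a$), then $a_{n} \log \left( a_{n} / \gamma_{n} \right) \to \infty$, so $\ell \left( \gamma_{n} ; a_{n} \right) \to \infty = \ell \left( 0 ; a \right)$; if $\gamma > 0$ and $a_{n} \downarrow 0$ (with $\gamma_{n} \to \gamma$), then $a_{n} \log \left( a_{n} / \gamma_{n} \right) \to 0$, so $\ell \left( \gamma_{n} ; a_{n} \right) \to \gamma = \ell \left( \gamma ; 0 \right)$; and at the corner $(0,0)$ lower semi-continuity is automatic because $\ell \ge 0 = \ell \left( 0 ; 0 \right)$. \emph{(ii) Coercivity:} using $\log t \le t/2$ for $t \ge 2$, one checks that $\ell \left( \gamma ; a \right) \ge \gamma/2 - a$ whenever $a \ge 0$ and $\gamma \ge 2a$ (and $\ell \left( \gamma ; a \right) = \infty$ when $a < 0$). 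Hence for any $A,c \ge 0$, the set $\left\lbrace (\gamma,a) : \ell \left( \gamma ; a \right) \le c, \ |a| \le A \right\rbrace$ is contained in $\left[ 0 , 2 \left( c+A \right) \right] \times \left[ -A , A \right]$.

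With these in hand, fix $a_{n} \to a$ and set $L := \liminf_{n} I \left( a_{n} \right)$; if $L = \infty$ there is nothing to prove, so assume $L < \infty$ and pass to a subsequence along which $I \left( a_{n} \right) \to L$. For each large $n$ pick $\gamma_{n} \in \mathcal{R}$ with $\ell \left( \gamma_{n} ; a_{n} \right) + \psi \left( \gamma_{n} \right) \le I \left( a_{n} \right) + 1/n$. Since $\psi \ge 0$ and the $a_{n}$ are bounded, the values $\ell \left( \gamma_{n} ; a_{n} \right)$ are bounded, so fact (ii) forces $\left\lbrace \gamma_{n} \right\rbrace$ to be bounded; extract a further subsequence with $\gamma_{n} \to \gamma^{*}$, and note $\gamma^{*} \in \mathcal{R}$ because $\mathcal{R}$ is closed. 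The map $(\gamma,a) \mapsto \ell \left( \gamma ; a \right) + \psi \left( \gamma \right)$ is jointly lower semi-continuous on $\left[ 0,\infty \right) \times \mathbb{R}$, being the sum of the lower semi-continuous map from (i) and the (trivially jointly lower semi-continuous) map $(\gamma,a) \mapsto \psi \left( \gamma \right)$, with all terms nonnegative so that no $\infty - \infty$ arises. Therefore
\begin{align*}
I \left( a \right) \le \ell \left( \gamma^{*} ; a \right) + \psi \left( \gamma^{*} \right) \le \liminf_{n} \left[ \ell \left( \gamma_{n} ; a_{n} \right) + \psi \left( \gamma_{n} \right) \right] \le \liminf_{n} \left( I \left( a_{n} \right) + \tfrac{1}{n} \right) = L,
\end{align*}
which gives $\liminf_{n} I \left( a_{n} \right) \ge I \left( a \right)$, as required.

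I expect the main obstacle to be the bookkeeping in step (i): verifying joint lower semi-continuity of $\ell$ across the degenerate cases $\gamma = 0$ and $a = 0$, which is precisely the ``elementary but tedious'' part alluded to in the paper. Pinning down the explicit coercivity constant in (ii) is a secondary nuisance; note that the monotonicity relations \eqref{eq:ell_leq}--\eqref{eq:ell_geq} already show $\gamma \mapsto \ell \left( \gamma ; a \right)$ is eventually increasing, which can be used to shortcut the argument for (ii). Once (i) and (ii) are available, the remainder is the routine ``extract a minimizing $\gamma_{n}$, invoke closedness of $\mathcal{R}$ and joint lower semi-continuity'' scheme.
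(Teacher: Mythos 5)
Your proof is correct, and it fills a gap the paper deliberately leaves open: the authors state that the proofs of Propositions~\ref{pr:verifyvaradhan} and~\ref{pr:Iratefunction} are ``elementary but tedious'' and omit them entirely, so there is no argument in the paper to compare against line by line. Your two ingredients both check out. The joint lower semi-continuity of $\left( \gamma , a \right) \mapsto \ell \left( \gamma ; a \right)$ on $\left[ 0,\infty \right) \times \mathbb{R}$ is verified correctly in all the degenerate cases (the limits $a_{n} \log \left( a_{n} / \gamma_{n} \right) \to \infty$ when $\gamma_{n} \downarrow 0$ with $a_{n} \to a > 0$, and $\to 0$ when $a_{n} \downarrow 0$ with $\gamma_{n} \to \gamma > 0$, are right, and the corner $\left( 0,0 \right)$ is indeed handled by nonnegativity of $\ell$). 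The coercivity bound $\ell \left( \gamma ; a \right) \geq \gamma / 2 - a$ for $a \geq 0$, $\gamma \geq 2a$ follows from $\log t \leq t/2$ on $\left[ 2 , \infty \right)$ exactly as you say, and it does confine near-minimizers to a compact subset of $\left[ 0,\infty \right)$, after which closedness of $\mathcal{R}$ and joint lower semi-continuity of the nonnegative sum $\ell \left( \gamma ; a \right) + \psi \left( \gamma \right)$ finish the argument. The one stylistic difference worth noting: the text surrounding the proposition suggests the authors' intended route goes through the ordering relations \eqref{eq:ell_leq}--\eqref{eq:ell_geq}, i.e., exploiting that $\gamma \mapsto \ell \left( \gamma ; a \right)$ is unimodal-in-$\gamma$ so that the infimum over $\mathcal{R}$ is effectively attained near $a$ or at the nearest endpoints of $\mathcal{R}$; your direct-method argument (extract a bounded minimizing sequence, pass to a limit) sidesteps that case analysis, uses only closedness of $\mathcal{R}$ and lower semi-continuity of $\psi$ exactly as hypothesized, and is arguably cleaner and more robust. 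No gaps.
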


\bibliographystyle{plain}
\bibliography{ref}

\end{document}